\documentclass{amsart}
\usepackage{graphicx}
\usepackage{amssymb}
\usepackage{amsfonts}
\usepackage{hyperref}
\usepackage{mathrsfs}
\swapnumbers
\sloppy
\vfuzz2pt 
\hfuzz2pt 
\newtheorem{theorem}{Theorem}[section]
\newtheorem{lemma}[theorem]{Lemma}
\newtheorem{corollary}[theorem]{Corollary}
\newtheorem{proposition}[theorem]{Proposition}
\theoremstyle{definition}
\newtheorem{definition}[theorem]{Definition}
\newtheorem{assumption}[theorem]{Assumption}
\newtheorem{remark}[theorem]{Remark}
\newtheorem{example}[theorem]{Example}

\numberwithin{equation}{section}
\theoremstyle{plain}

\numberwithin{equation}{section} 
\numberwithin{figure}{section} 
\theoremstyle{plain}
\theoremstyle{plain}
\theoremstyle{remark}
\newtheorem*{acknowledgement*}{Acknowledgement}
\theoremstyle{example}

\usepackage[margin=3cm]{geometry}

\addtolength{\oddsidemargin}{-1cm}
\addtolength{\evensidemargin}{-1cm}
\addtolength{\textwidth}{2cm}

\addtolength{\topmargin}{-0.3087cm}
\addtolength{\textheight}{0.624cm}

\newcommand{\cA}{{\mathcal A}}
\newcommand{\cB}{{\mathcal B}}

\newcommand{\cD}{{\mathcal D}}

\newcommand{\cF}{{\mathcal F}}
\newcommand{\cG}{{\mathcal G}}

\newcommand{\cJ}{{\mathcal J}}

\newcommand{\cL}{{\mathcal L}}

\newcommand{\cX}{{\mathcal X}}
\newcommand{\cY}{{\mathcal Y}}

\newcommand{\te}{{\theta}}

\newcommand{\om}{{\omega}}
\newcommand{\ve}{{\varepsilon}}
\newcommand{\del}{{\delta}}

\newcommand{\sig}{{\sigma}}
\newcommand{\al}{{\alpha}}

\newcommand{\ka}{{\kappa}}
\newcommand{\la}{{\lambda}}


\newcommand{\bbC}{{\mathbb C}}
\newcommand{\bbE}{{\mathbb E}}

\newcommand{\bbN}{{\mathbb N}}
\newcommand{\bbP}{{\mathbb P}}
\newcommand{\bbR}{{\mathbb R}}

\newcommand{\bbZ}{{\mathbb Z}}




\begin{document}
\title[]{Statistical properties of Markov shifts: part II-LLT}  
 \vskip 0.1cm
 \author{Yeor Hafouta}
\address{
Department of Mathematics, The University of Florida}
\email{yeor.hafouta@mail.huji.ac.il}%

\thanks{ }
\dedicatory{  }
 \date{\today}

\maketitle
\markboth{Y. Hafouta}{ } 
\renewcommand{\theequation}{\arabic{section}.\arabic{equation}}
\pagenumbering{arabic}

\begin{abstract}
We prove Local (Central) Limit Theorems (LLT)  for partial sums of the form $S_n=\sum_{j=0}^{n-1}f_j(...,X_{j-1},X_j,X_{j+1},...)$, where $(X_j)$ is an exponentially fast mixing Markov chain with equicontinuous  conditional probabilities satisfying some ``physicality" assumptions
and $f_j$ are equicontinuous functions. Our conditions will always be in force when the chain takes values on a metric space and have uniformly bounded away from $0$ backward transition densities with respect to a measure which assigns uniform positive mass to certain ``balls".
 This paper complements \cite{MarShif1} where Berry-Esseen theorems, were proven for (not necessarily continuous) functions satisfying certain approximation conditions. Our results address a question posed by D. Dolgopyat and O. Sarig in \cite[Section 1.5]{DS}.

 In order to determine  the type of local limit theorem that holds, we need to develop a reduction theory in the sense of \cite{DolgHaf LLT, DS} for equicontinuous functions $f_j$, which allows us to determine whether ``$f_j$ can essentially be reduced to integer valued functions".  
As a byproduct of our methods we are also able to prove first order expansions in the irreducible case, which is crucial in detemining when better than the general optimal $O(\|S_n\|^{-1})$ central limit theorem rates can be achieved.  Even though the case of non-stationary sequences and time dependent functions $f_j$ is more challenging, our results seem to be new already for stationary Markov chains and a sequence of functions $f_j$ (or even for a single function $f_j=f$). They also seem to be new for non-stationary Bernoulli shifts (that is when $(X_j)$ are independent but not identically distributed).

Like in \cite{MarShif1}, our results apply to  Markov shifts in random dynamical environment, 
products of random non-stationary positive matrices and other operators, 
random Lyapunov exponents as well as several processes arising in statistics and applied probability like (some classes of) linear processes and inhomogeneous  iterated random functions. Most of these examples seem to only be treated in literature for iid $X_j$ and here we are able to drop both the stationarity and the independence assumptions. 
Like in \cite{MarShif1}, our proofs are based on conditioning on the future instead of the regular conditioning on the past that is used to obtain similar results when $f_j(...,X_{j-1},X_j,X_{j+1},...)$ depends only on $X_j$ (or on finitely many variables). In particular we generalize the LLT together with the reduction theory in \cite{DS} to functions which depend on the entire path of the chain, and the local limit theorems  in \cite{DolgHaf LLT} (when restricting to the Markovian case) to more general chains than the finite state ones considered there. 

Our results significant for both practitioners from statistics and applied probability and theorists in probability theory, ergodic theory and dynamical systems. For instance, we generalize \cite{HennionAoP97} from iid matrices to non-stationary Markovian ones (under suitable conditions) and get local limit theorems in the (non-stationary) setup of \cite{KestFurs61}.  We expect many other applications of our abstract results, for instance, to Markovian inhomogeneous random walks on $\text{GL}_d(\bbR)$, but in order not to overload the paper this will be discussed in future works.
\end{abstract}

\section{Introduction}
\label{SSIntro}

Limit theorems for weakly dependent sequences of random variables have been studied extensively in the last century. One of the main result is the Central Limit Theorem (CLT). For stationary sequences the CLT  states that the probability that a partial sum at time $N$
 belongs to an interval
of size $\sqrt{N}$ is asymptotically
the same as for the normal random variable with the same mean and variance.
In many problems one needs to see if the same conclusion holds for unit size intervals.  Such results follow from the local (central) limit theorem (LLT), which has applications to various areas of mathematics including mathematical physics
\cite{BLRB, DN16, DN-Mech, DT77, EM22, Kh49, LPRS, PS23}, number theory \cite{ADDS, Beck10, DS17}, 
geometry (\cite{LL15}), PDEs \cite{HKN}, and combinatorics (\cite{Can76,BR83, GR92, Hw98}).
Applications to dynamics include  abelian covers (\cite{BFRT, CaRa, DNP, OP19}),
suspensions flows (\cite{DN-llt}), skew products (\cite{Br05, DSL16, DDKN1, DDKN2, LB06}), and homogeneous dynamics (\cite{BeQu, BRLLT05, BRLLT23, Guiv2015, Hough2019}).
Other areas  of interest for non-autonomous
local limit theorems are applications to renormalization
(cf.  \cite{ARHW22, ARHW23, ADDS, BU}) and to random walks in random environment
(\cite{BCR16, CD24, DG13, DG21, DK20}).

Traditionally, in literature most results concerning limit theorems are obtained for stationary sequences, which can be viewed as an autonomous dynamical system generated by a single deterministic map preserving the probability law generated by the path of the process. One of the current challenges in the field of stochastic processes and dynamical systems  is to better understand non-stationary processes, namely
random and time-varying dynamical systems, in particular to develop novel probabilistic techniques to prove limit theorems. This direction of research, the ambition of which is to approach more the real by taking in account a time dependence inherent\footnote{e.g. external forces affect the local laws of physics, the uncertainty principle etc.} in some phenomena, has recently seen an enormous amount of activity. Many difficulties and questions emerge from this non-stationarity and time dependence. Let us mention for example the existence of many open questions about the establishment of quenched and sequential limit theorems  for systems with random or non-autonomous dynamics. The study of these systems opens new interplays between probability theory and dynamical systems, and leads to interesting insights in other areas of science. 

Recently there has been significant interest in limit theorems of
 non-stationary systems. 
In particular, the CLT for non autonomous hyperbolic systems was investigated in 
\cite{ANV, ALS09, Bk95,  CLBR, CR07, LD1, DFGTV, YH Adv, YH 23, HNTV, Kifer 1998, NSV12, NTV ETDS 18, CLT3, CLT2} and in  \cite{Cogburn, Dob, Kifer 1998, Pelig, SeVa}  for inhomogeneous Markov chains.
By contrast, the LLT has received much less attention and, it has been only
established for random systems under strong additional assumptions \cite{DPZ, DFGTV, DFGTV1, HK, Nonlin, YH YT}, see also \cite{BCR16, DS, MPP LLT1, MPP LLT2} for the results for  Markov chains. 
In \cite{DolgHaf LLT} we proved an LLT for non-stationary subshifts of finite type and related transformations, which have applications to finite state uniformly elliptic Markov shifts (namely, for functionals that depend on the entire path of the chain). This seems to be the only paper in literature when the LLT is obtained without special assumptions that ensure, for instance, that the variance of the sum grows linearly fast.
In fact, the question of the LLT is quite
subtle, and even in the setting of independent identically distributed (iid) random variables the local distribution 
depends on the arithmetic properties of the summands. If the summands do not take values in a lattice  then the local distribution is Lebesgue, and otherwise it is the counting measure on the lattice. The case when the local distribution is Lebesgue will be referred to as the non-lattice LLT while the case when the local distribution is an appropriate counting measure will be referred to as the lattice LLT. The exact definitions are postponed to \S \ref{Sec LLT}.

Concerning the LLT, for stationary Markov chains with countable state space it was studied for the first time in \cite{Nag57}. Since then the LLT has been established for a variety of stationary processes, mostly via spectral theory of quasi compact operators \cite{Jon, GH,HH, KM} or renewal theory \cite{GO} (which in its core also relies on quasi-compactness). The number of papers in the stationary case is enormous, and so we make no attempt in providing  a complete  list of references. The problem in proving the LLT in the non-stationary case is that there is no single operator to analyze, and instead there are sequences of operators, which make tools from ``soft" analysis like quasi-compactness unavailable.

In this paper for certain classes of inhomogeneous Markov  chains $(X_j)$ with sufficiently regular transition probabilities we prove a general LLT for partial sums of the form $S_n=\sum_{j=0}^{n-1}f_j(...,X_{j-1},X_j,X_{j+1},...)$ formed by an equicontinuous sequence of functions $f_j$. This is in contrast to \cite{BCR16, DS, MPP LLT1, MPP LLT2} where local limit theorems where obtained when $f_j$ depends only on finitely many variables.
In fact,
we identify the complete set obstructions to the non lattice LLT,
 see the discussion in \S \ref{SSObstructions}. 
In the autonomous case this is done by using a set of tools called
 Livsic theory (see \cite{Livsic}).
 In that case (see \cite{HH}) the non-lattice LLT fails only if the underlying function forming the partial sums is lattice valued, up to a coboundary. 
 In the autonomous case different notions of coboundary (measurable, $L^2$, continuous,
 H\"older, smooth) are equivalent, see \cite{PP, dLMM, Wil}, but this is 
  false in the non stationary setting (see \cite{BDH} or Theorem \ref{Var them}).
 In the course of the proof of our main results we develop a reduction theory for inhomogeneous Markov  shifts, generalizing the corresponding results for Markov  chains \cite{DS} and  chaotic non-autonomous dynamical systems \cite{DolgHaf LLT} (which have applications to finite state Markov shifts).  
We stress that we have no additional assumptions. In particular, we neither assume randomness of the Markov operator (i.e. the case of a random environment), nor that the variance of $S_n$ grows linearly in $n$. We also obtain a lattice LLT when the obstruction to the non-lattice LLT happen, and so we have no special irreducibility or reducibility assumptions on $(f_j)$.

Our results significant for both practitioners from statistics and applied probability and theorists in probability theory, ergodic theory and dynamical systems.
Like in \cite{MarShif1}, our results apply to  Markov shifts in random dynamical environments, 
products of random non-stationary positive matrices and other operators, 
random Lyapunov exponents as well as several processes arising in statistics and applied probability like (some classes of) linear processes and inhomogeneous  iterated random functions. Most of these examples seem to only be treated in literature for iid $X_j$ and here we are able to drop both the stationarity and the independence assumptions. 
 In particular, we generalize the LCLT in \cite{HennionAoP97} from iid matrices to non-stationary Markovian ones (under suitable conditions) and get local limit theorems in the (non-stationary) setup of \cite{KestFurs61}.  We expect many other applications of our abstract results, for instance, to Markovian inhomogeneous random walks on $\text{GL}_d(\bbR)$, but in order not to overload the paper this will be discussed in future works.

\subsection{Outline of the proof}
Like in \cite{MarShif1} our proofs are based on conditioning on the future instead of the past like in the classical case of functionals $f_j=f_j(X_j)$ of Markov chains $(X_j)$. In \cite{DolgHaf LLT} we proved an LLT for non-stationary subshifts of finite type and other related expanding maps. The proof in \cite{DolgHaf LLT} relies on the following ingredients. The first is that the iterates of the transfer operators satisfy an appropriate sequential Perron-Frobenius theorem with respect to appropriate norms. In Theorem \ref{RPF} we obtain such results with norms corresponding to ``graded" type of modulus of continuity of the transition operators and the functions $f_j$. The second ingredient is existence of two norms such that boundedness in the stronger norm and convergence to $0$ in $L^1$ of a sequence of functions implies convergence to $0$ in the weaker norm. In \cite{DolgHaf LLT} we worked with H\"older norms, where the strong norm corresponds to a H\"older norm with larger exponent. In our case the norms are a bit more complicated, and we take the weak norm to be the norm corresponding to a small power of the given modulus of continuity, see Lemma \ref{al be}. The third ingredient is a Lasota-Yorke inequality (or Doeblin-Fortet inequality), which we prove in Lemma \ref{Lasota Yorke}. Building on this inequality we are able to prove a reduction lemma, Lemma \ref{LmTDSmall},  which replaces the corresponding results in \cite[Lemmata 5.1 and 5.2]{DolgHaf LLT}. Roughly speaking this lemma shows that when the operator norm of the Fourier-Markov operator corresponding to the characteristic functions of the partial sums is close to one then the system enjoys some periodicity.
Using these tools we then apply the general mechanism developed in \cite{DolgHaf LLT} which includes a gluing lemma that allows us to combine successively the aforementioned periodicity and get periodicity along large blocks. Once this is established  the proof proceeds like in \cite[Sections 6 and 7]{DolgHaf LLT}, so the main novelty here is in  finding appropriate norms for which we have all the ingredients of the general scheme in \cite{DolgHaf LLT}. In particular, serious efforts have been made in order to avoid assuming that the functions $f_j$ are H\"older continuous.
Finally, to handle functions  $f_j=f_j(...,X_{j-1},X_j,X_{j+1},...)$ of the two sided path of the underlying Markov chain we prove a Sinai type lemma with respect to  norms we define, Lemma \ref{Sinai}, and then deduce the results for the two sided functions from the results for the one sided functions using conditioning on the past.

\section{Preliminaries and main results}
\subsection{The setup and standing assumptions}
Let $(X_j)_{j\geq0}$ be Markov chain such that $X_j$ takes values in a metric space $(\cX_j,d_j)$ whose diameter not exceed $1$.
Recall that the reverse $\phi$-mixing coefficients $\phi_R(n)$ a sequence $(X_j)$ of random variables are given by 
$$
\phi_R(n)=\sup_{k\in\bbN}\left\{\left|\bbP(A|B)-\bbP(A)\right|: A\in\cF_{0,k}, B\in\cF_{k+n,\infty}, \bbP(B)>0\right\}
$$
where $\cF_{u,v}$ is the $\sigma$-algebra generated by $X_s$ for all finite $u\leq s\leq v$. Our first standing assumption is the following mixing rates
\begin{assumption}\label{MixAss}
There exist constants $A>0$ and $\gamma\in(0,1)$ such that for all $n\in\bbN$,
$$
\phi_R(n)\leq A\gamma^n.
$$
\end{assumption}
\begin{remark}\label{Rem pi}
 Recall that the  classical $n$-th step (backward) Dobrushin contraction coefficients (\cite{Dob}) are given by   
 $$
\pi_j(n)=\sup_{x,y}\|\bbP(X_j\in\cdot|X_{j+n}=x)-\bbP(X_j\in\cdot|X_{j+n}=y)\|_{TV}.
 $$
Then by arguing like in the proof of \cite[Lemma 3.3]{HafSPA} for every $n_0\in\bbN$ there exists $C_{n_0}>0$ such that for all $n$
$$
\phi_R(n)\leq C_{n_0}\left(\sup_j\pi_{j}(n_0)\right)^{n/n_0}
$$
and so Assumption \ref{MixAss} is in force when $\sup_j\pi_j(n_0)<1$. Assumption \ref{MixAss} is also in force if the law of $X_{j+n_0}$ given $X_j$ has transition densities $p_{j,n_0}$ (with respect to some probability measure) which are uniformly bounded and bounded away from the origin. Indeed, in that case the chain is already exponentially fast $\psi$ mixing, see \cite[Ch. 4]{Brad}. 
\end{remark}

 Let us take a function $\om:[0,1]\to [0,1]$ which is continuous at the origin and $\omega(0)=0$. 
In what follows $\omega$ will play the role of a   modulus of continuity. Let us denote by $P_{j,n}(\cdot|x)$ the law of $(X_{j},...,X_{j+n-1})$ given $X_{j+n}=x$.
Our second standing assumption is that the measures $P_{j,n}(\cdot|x)$ are equicontinuous with respect to $x$ (with $\om$ being their modulus of continuity).
\begin{assumption}\label{Assss}
There exist constants $A_1>0$ and $n_0\in\bbN$ such that for all $j\geq0$ and $x,y\in\cX_{j+1}$ we have 
$$
\|P_{j,n_0}(\cdot|x)-P_{j,n_0}(\cdot|y)\|_{TV}\leq A_1\omega(d_{j+1}(x,y)).
$$
\end{assumption}

Our third standing assumption is that the measures $P_{j,n}$ are ``physical". Denote $\cX_{j,n}=\prod_{j\leq k<j+n}\cX_k$ and for $x,y\in\cX_{j,n}$,
$$
\om_{j,n}(x,y)=\max_{j\leq k<j+n}\om(d_j(x_j,y_j)).
$$

\begin{assumption}\label{Ball Ass}
There exists $n_0\in\bbN$ such that for all $r$ small enough,
$$
\zeta(r):=\inf\left\{P_{j,n}(\tilde B_{j,n_0}(b,r)|x): j\in\bbZ, b\in \cX_{j,n_0}, x\in\cX_{j+n_0}\right\}>0
$$ 
where $\tilde B_{j,n_0}(b,r)=\{y\in\cX_{j,n_0}: \om_{j,n_0}(b,y)\leq r\}$. 
\end{assumption}
\begin{remark}
Define a metric $\rho_{j,n}$ on $\cX_{j,n}$ by $\rho_{j,n}(x,y)=\max_{j\leq k<j+n}d_{k}(x_k,y_k)$. Let us denote by $B_{j,n}(x,r)$ a closed ball with radius $r$ around $x\in\cX_{j,n}$ with respect to this metric. Then if we assume that $\om$ is decreasing then
$$
B_{j,n}(x,\om^{-1}(r))\subseteq\tilde B_{j,n}(x,r).
$$
Thus Assumption \ref{Ball Ass} is weaker than requiring that  for every $s>0$ small enough we have
$$
\inf\left\{P_{j,n}(B_{j,n_0}(b,s)|x): j\in\bbZ, b\in \cX_{j,n_0}, x\in\cX_{j+n_0}\right\}>0.
$$
\end{remark}
\begin{example}
Assumptions \ref{MixAss},  \ref{Assss} and \ref{Ball Ass} are in force if $P_{j}(\Gamma|x)=\int_{\Gamma} p_j(x,y)d\nu_j(y)$ for a probability measure $\nu_j$ such that $\inf_{j,x}\nu_j(B_{j,1}(x,r))>0$ for all $r$ small enough and the transition densities $p_j(x,y)$ satisfy $\inf_{x,y,j}p_j(x,y)>0$ and $\sup_{x,y,j}p_j(x,y)<\infty$. 
\end{example}

Next, let $\kappa_j$ denote the law of $(X_j,X_{j+1},...)$. Set $\cX_{j,\infty}=\prod_{k\geq j}\cX_k$.
Take some $a\in(0,1)$ and for $x,y\in\cX_{j,\infty}$ define
$$
\om_j(x,y)=2A_1\sup_{n\geq 0}a^{n}\left(\omega(d_{j+n}(x_{j+n},y_{j+n}))\right)
$$
where $A_1$ comes from Assumption \ref{Assss}.
In what follows $\om_j$ will play the role of the modulus of continuity of functions on $\cX_{j,\infty}$.
\begin{remark}
   When $\cX_j$ are  discrete spaces   (i.e. $d_{j+n}(x_{j+n},y_{j+n}))=1$, unless $x_{j+n}=y_{j+n}$) then up to a constant $\omega_j$ coincides with the dynamical distance $\tilde\rho_j(x,y)=a^{\inf\{k: x_{j+k}\not=y_{j+k}\}}$.
\end{remark}
For  $g:\cX_{j,\infty}\to\bbR$ define a norm $\|g\|_{j,\omega}$ by
$$
\|g\|_{j,\om}=\|g\|_{L^\infty(\kappa_j)}+v_{j,\omega}(g)
$$
where 
$$
v_{j,\omega}(g)=\sup\left\{\frac{|g(x)-g(y)|}{\omega_j(x,y)}: x\not=y\right\}.
$$
\begin{remark}\label{RemA}
If     $v_{j,\omega}(g)<\infty$ then 
$$
\|g_j-\bbE[g_j|X_j,...,X_{j+r}]\|_{L^\infty}\leq 2A_1v_{j,\omega}(g)a^r. 
$$
Hence, our notion of ``variation" $v_{j,\om}(\cdot)$ is stronger than the one in \cite{MarShif1} with $\delta=a$. Indeed, we also require $\om$-continuity in each coordinate. Therefore, all the Berry-Esseen theorems in \cite{MarShif1} hold true in our setup. 
 \end{remark}
Next, let $f_j:\cX_{j,\infty}\to\bbR$ be measurable functions such that $\sup_j\|f_j\|_{j,\om}<\infty$. Denote 
$$
S_nf=\sum_{j=0}^{n-1}f_j(X_j,X_{j+1},...).
$$
In this paper we will prove local central limit theorems for the partial sums $S_nf$.

\subsection{On the behavior of the variance and the CLT}

Using Theorem \ref{RPF} and the argument in the proof of \cite[Theorem 2.16]{MarShif1} we get the following result.

\begin{theorem}\label{Var them}
Under Assumptions \ref{MixAss} and \ref{Assss} the following conditions are equivalent.
\vskip0.2cm
(1) $\liminf_{n\to\infty}\text{Var}(S_nf)<\infty$;
\vskip0.2cm
(2) $\sup_{n\in\bbN}\text{Var}(S_n f)<\infty$;

\vskip0.2cm
(3) we can write 
$$
f_j=\bbE[f_j(X_j,X_{j+1},...)]+M_j(X_j,X_{j+1},...)+u_{j+1}(X_{j+1},X_{j+2},...)-u_j(X_j,X_{j+1}...),\,\ \text{a.s.}
$$
where $\sup_j\|u_j\|_{j,\om}<\infty,$\;
$\sup_j\|M_j\|_{j,\om}\!\!<\!\!\infty$, \, $u_j$ and $M_j$ have zero mean and $M_j(X_j,X_{j+1},...),j\geq 0$ is a reverse martingale difference  with respect to the reverse filtration $\cG_j=\sigma\{X_k: k\geq j\}$ and\footnote{Note that by the martingale convergence theorem we get that the sum $\sum_{k=0}^\infty M_k(X_k,X_{k+1},...)$ converges almost surely and in $L^p$ for all finite $p$.} 
$$
\sum_{j\geq 0}\text{Var}(M_j(X_j,X_{j+1},...))\!\!<\!\!\infty.
$$
\vskip0.1cm
(4) there exist measurable functions $H_j:\cX_{j,\infty}\to\bbR$ such that 
$$
f_j(X_j,X_{j+1},...)=H_{j+1}(X_{j+1},X_{j+2},...)-H_j(X_j,X_{k+1},...)\,\,\text{ a.s.}
$$
In fact, we must have $H_j\in L^s(\ka_j)$ for all finite $s\leq b$ and, in fact, $H_j=\mu_j(H_j)+u_j(X_j,X_{j+1},...)+\sum_{k\geq j}M_k(X_k,X_{k+1},...)$.
\end{theorem}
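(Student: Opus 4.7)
The plan is to construct a Gordin-type reverse martingale-coboundary decomposition unconditionally, and then observe that all four conditions then collapse to the single requirement $\sum_j \mathrm{Var}(M_j) < \infty$. First, $(4) \Rightarrow (1)$ will be immediate: if $f_j = H_{j+1}\circ\sigma - H_j$, telescoping gives $S_n f = H_n - H_0$, so $\mathrm{Var}(S_n f) \leq 4 \sup_j \|H_j\|_{L^2}^2 < \infty$.

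For the decomposition, I would apply Theorem \ref{RPF} to the transfer operator $\hat\cL_k g := \bbE[g \mid \cG_{k+1}]$, which corresponds to conditioning on the future of the Markov chain. This should yield constants $C>0$ and $\rho \in (0,1)$ uniform in $k,j$ such that
\[
\|\bbE[\bar f_k \mid \cG_j]\|_{j,\om} \leq C \rho^{j-k} \|\bar f_k\|_{k,\om}, \qquad k < j,
\]
where $\bar f_k := f_k - \bbE f_k$. I would then set $u_0 := 0$ and
\[
u_j := \sum_{k=0}^{j-1} \bbE[\bar f_k \mid \cG_j],
\]
so that the series converges absolutely with $\sup_j \|u_j\|_{j,\om} < \infty$, and define $M_j := \bar f_j + u_j - u_{j+1}$. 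The tower property $\bbE[\bbE[\bar f_k \mid \cG_j] \mid \cG_{j+1}] = \bbE[\bar f_k \mid \cG_{j+1}]$ makes $\bbE[M_j \mid \cG_{j+1}] = 0$ immediate, so $M_j$ is a reverse-martingale difference with $\sup_j \|M_j\|_{j,\om} < \infty$ and $\bbE M_j = 0$; the decomposition $\bar f_j = M_j + u_{j+1} - u_j$ of (3) holds by construction, and only the summability of $\mathrm{Var}(M_j)$ remains to be tied to the variance assumption.

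Next, telescoping gives $S_n \bar f = \sum_{j<n} M_j + u_n$. Since $M_k$ is $\cG_k \subseteq \cG_{j+1}$-measurable for $k > j$ and $u_n$ is $\cG_n \subseteq \cG_{j+1}$-measurable for $j < n$, all cross-terms vanish, yielding
\[
\mathrm{Var}(S_n f) = \sum_{j<n} \mathrm{Var}(M_j) + \mathrm{Var}(u_n) = \sum_{j<n} \mathrm{Var}(M_j) + O(1).
\]
Since the first sum is non-decreasing in $n$ and the second term is uniformly bounded, $\liminf \mathrm{Var}(S_n f) < \infty$ already forces $\sum_j \mathrm{Var}(M_j) < \infty$, which in turn implies $\sup_n \mathrm{Var}(S_n f) < \infty$. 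This delivers $(1) \Leftrightarrow (2) \Leftrightarrow (3)$ in one stroke. For $(3) \Rightarrow (4)$ I would invoke the reverse-martingale convergence theorem for $W_j := \sum_{k \geq j} M_k$ (which converges in $L^2$ since $\sum_k \mathrm{Var}(M_k) < \infty$, is $\cG_j$-measurable, and thus is a genuine function of $(X_j, X_{j+1}, \ldots)$), and define $H_j := c_j + u_j - W_j$ with $c_j := \sum_{k<j} \bbE f_k$; using $W_{j+1} - W_j = -M_j$ together with (3), a direct computation gives $H_{j+1} - H_j = f_j$. The $L^s$ bounds on $H_j$ for $s \leq b$ follow from uniform boundedness of $u_j$ and standard moment estimates for reverse martingales with uniformly bounded differences (e.g.\ a reverse-time Burkholder-Davis-Gundy inequality).

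The main obstacle will be proving the exponential decay above in the full $\|\cdot\|_{j,\om}$ norm (not merely in $L^\infty$) uniformly in $k,j$. Assumption \ref{Assss} alone delivers only a total-variation contraction; controlling the variation part $v_{j,\om}$ of $\bbE[\bar f_k \mid \cG_j]$ requires the sequential Perron-Frobenius estimates of Theorem \ref{RPF}, whose graded analysis reconciles the geometric weight $a^n$ in the definition of $\om_j$ with the Dobrushin-type contraction $\delta\,\om(\cdot)$. Once this decay is in hand, the remainder is bookkeeping paralleling the proof of \cite[Theorem 2.16]{MarShif1}.
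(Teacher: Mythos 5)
Your construction is essentially the paper's own route: the contraction $\|\bbE[\bar f_k\mid\cG_j]\|_{j,\om}\le C\rho^{j-k}\|\bar f_k\|_{k,\om}$ is precisely Theorem \ref{RPF} (since $\bbE[\bar f_k\mid\cG_j]=\cL_k^{j-k}\bar f_k$ and $\ka_k(\bar f_k)=0$; note that although Theorem \ref{RPF} is stated under both standing assumptions, its proof only uses Assumption \ref{Assss}, so the hypotheses of the present theorem suffice). With that decay your Gordin-type definitions of $u_j$ and $M_j$, the reverse-martingale property, the orthogonality identity $\mathrm{Var}(S_nf)=\sum_{j<n}\mathrm{Var}(M_j)+\mathrm{Var}(u_n)$, and the step $(3)\Rightarrow(4)$ via $W_j=\sum_{k\ge j}M_k$ are all correct, and they deliver $(1)\Leftrightarrow(2)\Leftrightarrow(3)\Rightarrow(4)$ exactly as in the argument the paper cites.

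The genuine gap is the direction $(4)\Rightarrow(1)$. Condition (4) only provides \emph{measurable} functions $H_j$ solving the coboundary equation; the bound $\sup_j\|H_j\|_{L^2}<\infty$ that you invoke is not a hypothesis --- the $L^s$-integrability and the formula $H_j=\mu_j(H_j)+u_j+\sum_{k\ge j}M_k$ are part of the \emph{conclusion} to be proved. With merely measurable $H_j$, telescoping gives $S_nf=H_n(X_n,\dots)-H_0(X_0,\dots)$ a.s., which by itself yields no variance bound, so your ``immediate'' step fails as written. Closing it requires a measurable-rigidity (Livsic-type) argument, for instance: conditionally on $\cG_n$ the term $H_n(X_n,\dots)$ is a constant, so the conditional law of $H_0=H_n-S_nf$ given the future is that constant minus $S_nf$; if one assumes $\mathrm{Var}(S_nf)\to\infty$, the unconditional decomposition you already built reduces this to a sum of reverse martingale differences with divergent variance, and an anticoncentration/CLT estimate for that sum given $\cG_n$ forces $\bbP(|H_0|\le K)\to0$ for every $K$, contradicting a.s.\ finiteness of $H_0$. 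Some argument of this kind (after which uniqueness of measurable solutions up to constants, hence the asserted integrability of $H_j$, follows from triviality of the future tail $\sigma$-algebra via Lemma \ref{Phi Lemma}) is needed to complete the equivalence; your proposal omits it, and also leaves the ``in fact'' clause of (4) unaddressed for arbitrary measurable solutions.
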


\begin{remark}
In \cite{MarShif1} we showed that if $\lim_{n\to\infty}\text{Var}(S_nf)=\infty$ then $(S_nf-\bbE[S_nf])/\sigma_n$ converges in distribution to the standard normal law, where $\sig_n=\|S_nf-\bbE[S_nf]\|_{L^2}$, namely the CLT holds. In fact, we get optimal CLT rates.
\end{remark}
\subsection{Main results: reduction theory, local limit theorems and first order Edgeworth expansions}\label{Sec LLT}

Recall that a sequence of  square integrable random variables $W_n$ with $\sig_n=\|W_n-\bbE[S_n]\|_{L^2}\to\infty$ obeys the non-lattice local central limit theorem (LLT) if for every continuous function $g:\bbR\to\bbR$ with compact support, or an indicator of a bounded interval we have 
$$
\sup_{u\in\bbR}\left|\sqrt{2\pi}{\sig_n}
\bbE[g(W_n-u)]-\left(\int g(x)dx\right)e^{-\frac{(u-\bbE[W_n])^2}{2\sig_n^2}}\right|=
 o(1).
$$

\noindent
A sequence of square integrable integer valued  random variables $W_n$  
obeys the lattice LLT if 
$$
\sup_{u\in\bbZ}\left|\sqrt{2\pi}{\sig_n}\bbP(W_n=u)
-\eta e^{-\frac{(u-\bbE[W_n])^2}{2\sig_n^2}}\right|= o(1).
$$
 To compare the two results note that the above equation is equivalent to saying 
that
for every continuous function $g:\bbR\to\bbR$ with compact support, 
$$
\sup_{u\in\bbZ}\left|\sqrt{2\pi}{\sig_n}\bbE[g(W_n-u)]
-\left(\sum_{k}g(k )\right)e^{-\frac{(u-\bbE[W_n])^2}{2\sig_n^2}}\right|= o(1).
$$

 A more general type of LLT which is valid also for reducible $(f_j)$ (defined below)
 is discussed in Section \ref{Red sec}.

\begin{definition}\label{Irr Def}
A sequence of real valued functions $(f_j)$ on $\cX_{j,\infty}$ is reducible (to a lattice valued 
sequence) 
if there is $h\not=0$ and 
 functions $H_j: \cX_{j,\infty}\to \mathbb{R},$ $Z_j: \cX_{j,\infty}\to \mathbb{Z}$  
such that $\sup_j
\|H_j\|_{j,\om}<\infty$, $ \left(S_nH\right)_{n=1}^\infty$ is tight, and
\begin{equation}
\label{EqReduction}
f_j=\ka_j(f_j)+H_j+hZ_j.
\end{equation}

A sequence of  $\mathbb{Z}$ valued functions $(f_j)$ is reducible if it admits the
representation \eqref{EqReduction} as above with
 $h>1.$
\vskip1mm

We say that the sequence $(f_j)$ is irreducible if it is not reducible.
\end{definition}

\begin{remark}\label{Rem Red}
By Theorem \ref{Var them}
the tightness condition in the definition of reducibility means that  
$$
H_j(x_j,x_{j+1},..)=A_j(x_j,x_{j+1},..)+B_j(x_j,x_{j+1},...)-B_{j+1}(x_{j+1},x_{j+2},...)
$$ 
where $\sup_j\|A_j\|_{j,\om}<\infty$ and $\sup_j\|B_j\|_{j,\om}<\infty$ and $\sum_{j\geq 0}A_j(X_j,X_{j+1},...)$ converges almost surely and in $L^p$ for all $p$.
Indeed, if the variance of $S_nH$ diverges then $S_nH/\|S_nH\|_{L^2}$ is not tight since it satisfies the central limit theorem (see \cite{MarShif1}). 
\end{remark}

Next, let $R=R(f)$ be the set of all numbers  $h\not=0$ such that \eqref{EqReduction} holds with appropriate functions $H_j, Z_j$. If $(f_j)$ is 
irreducible then $R=\emptyset$. Moreover, by Theorem \ref{Var them}, if $\sigma_n\not\to\infty,\, \sig_n=\|S_nf-\bbE[S_nf]\|_{L^2}$ then
$R=\bbR\setminus\{0\}$ since then \eqref{EqReduction} holds  for any $h$ with $Z_j=0$. The following result completes the picture. Let $\mathsf{H}=\{1/r: r\in R\}\cup \{0\}$.

\begin{theorem}\label{Reduce thm}
Let Assumptions \ref{MixAss}, \ref{Assss}, \ref{Ball Ass} be in force. 
 If $(f_j)$ is reducible and $\sigma_n\to \infty$   then 
$$
\mathsf{H}=h_0\bbZ
$$
for some $h_0>0$. As a consequence, the number $r_0=1/h_0$ is the largest positive number such that $(f_j)$ is reducible to an $r_0\bbZ$-valued sequence. Therefore, $f_j$ can be written in the form \eqref{EqReduction} with an irreducible sequence $(Z_j)$. 
\end{theorem}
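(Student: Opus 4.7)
The plan is to establish three facts about $\mathsf{H}$: (a) it is an additive subgroup of $\bbR$, (b) it is nontrivial, and (c) $0$ is an isolated point. The classical structure theorem for subgroups of $\bbR$ then forces $\mathsf{H}=h_0\bbZ$ for some $h_0>0$, from which the remaining assertions follow by unwinding the definitions.

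For (a), given nonzero $t_1,t_2\in\mathsf{H}$ with $t_1+t_2\ne 0$, I start from the reductions $f_j=\ka_j(f_j)+H_j^{(i)}+t_i^{-1}Z_j^{(i)}$ for $i=1,2$. Multiplying the $i$-th identity by $t_i$, summing, and dividing by $t_1+t_2$ yields
$$
f_j=\ka_j(f_j)+\frac{t_1H_j^{(1)}+t_2H_j^{(2)}}{t_1+t_2}+\frac{1}{t_1+t_2}\bigl(Z_j^{(1)}+Z_j^{(2)}\bigr).
$$
The combined $H$-term has uniformly bounded $\|\cdot\|_{j,\om}$-norm by the triangle inequality, and its partial sums are tight as a linear combination of the tight sums $S_NH^{(i)}$; the combined $Z$-term is $\bbZ$-valued. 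Hence $1/(t_1+t_2)\in R$, i.e.\ $t_1+t_2\in\mathsf{H}$. Closure under negation follows by replacing $Z_j$ with $-Z_j$. Point (b) is just the reducibility hypothesis, which says $R\ne\varnothing$.

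The heart of the argument is (c). The key tool is Lemma~\ref{LmTDSmall}: there is $\varepsilon_0>0$ such that whenever the Fourier--Markov operator $\cL_{[0,N)}^{t}$ (the $N$-fold composition of backward transition operators twisted by $e^{2\pi i tf_j}$) satisfies $\limsup_N\|\cL_{[0,N)}^{t}\|\ge 1-\varepsilon_0$, then $t\in\mathsf{H}$. Conversely, for $t\in\mathsf{H}$ the identity $e^{2\pi i Z_j}=1$ collapses the twist to its $H$-part, whose partial sums are tight, and this forces $\|\cL_{[0,N)}^{t}\|$ to be bounded away from $0$ along a subsequence. On the other hand, combining the Lasota--Yorke inequality (Lemma~\ref{Lasota Yorke}) with the sequential Perron--Frobenius theorem (Theorem~\ref{RPF}) produces a Gaussian-type estimate
$$
\bigl\|\cL_{[0,N)}^{t}\bigr\|\;\le\;C\,e^{-c\,t^2\sig_N^2}\qquad\text{for }|t|\le t_0,
$$
which is the same bound that drives the CLT and Berry--Esseen results of \cite{MarShif1}. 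Since $\sig_N\to\infty$, for every fixed $t\ne 0$ with $|t|\le t_0$ this tends to $0$, contradicting the lower bound for $t\in\mathsf{H}$. Therefore $\mathsf{H}\cap(-t_0,t_0)=\{0\}$, and (c) is established.

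An additive subgroup of $\bbR$ with $0$ isolated is of the form $h_0\bbZ$; combined with nontriviality, $h_0>0$. Setting $r_0=1/h_0$ exhibits $r_0$ as the largest element of $R$, and writing $(f_j)$ in the form~\eqref{EqReduction} with this $r_0$: if the resulting $\bbZ$-valued $(Z_j)$ were reducible with some $h'>1$, substituting that decomposition back into \eqref{EqReduction} would produce a reduction of $(f_j)$ with lattice spacing $r_0h'>r_0$, contradicting the maximality of $r_0$. I expect the principal obstacle to be in step (c): one must carefully match the Fourier-operator lower bound for $t\in\mathsf{H}$ provided by Lemma~\ref{LmTDSmall} with the Gaussian upper bound from Theorem~\ref{RPF} uniformly on a neighborhood of $t=0$.
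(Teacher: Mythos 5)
Your steps (a) and (b) are correct, and your overall architecture for (c) --- playing a Gaussian-type upper bound for the twisted operators against a lower bound valid for $t\in\mathsf{H}$, then invoking the subgroup structure --- is in spirit the paper's strategy (the paper isolates \emph{every} point of $\mathsf{H}$ by twisting with $g_j+sf_j$, whereas you isolate only $0$ and lean on the subgroup property; that difference is harmless). The genuine gap is the lower bound itself. You assert that for $t\in\mathsf{H}$ the lattice part disappears from the twist, the remaining part has tight partial sums, ``and this forces $\|\cL_{0,t}^{N}\|$ to be bounded away from $0$ along a subsequence'', attributing this to Lemma \ref{LmTDSmall}. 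Neither is right. Lemma \ref{LmTDSmall} goes in the opposite direction (a near-isometry hypothesis $\inf_x|\cL_{j,t}^{\ell}h(x)|\ge 1-\varepsilon$ on a single block yields a one-step decomposition of $f_{j+\ell-1}$; promoting that to ``$t\in\mathsf{H}$'' requires the gluing machinery of the LLT sections, and in any case that direction is never used in your argument). More importantly, tightness of $S_nH$ alone does not prevent $\|\cL_{0,t}^{N}\|$ from tending to $0$: after conjugating away the coboundary you are left with the twist $e^{iS_{0,N}g}$, and a lower bound for the operator $q\mapsto\cL_0^N(e^{iS_{0,N}g}q)$ is precisely the lower half of Proposition \ref{PrSmVarBlock}, which requires $\max_k\|g_k\|_{k,\om}\le\del_0$. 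The first finitely many $g_k$ need not be small; one only has $\|g_k\|_{k,\om^r}\to0$ (via Theorem \ref{Var them}, Remark \ref{Rem Red} and Lemma \ref{al be}), so the lower bound is available only for $\cL_{j,t}^n$ with $j$ large, after comparing with $\cA_{j,t}^n(q)=e^{ih_{j+n}}\cL_{j,t}^n(qe^{-ih_j})$. This is exactly how the paper argues in the proof of Corollary \ref{H corr}(iii), and this chain of ingredients is absent from your sketch.

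Because the lower bound only holds from a large starting index, your contradiction as stated at index $0$ does not close: submultiplicativity transfers upper bounds, not lower bounds, from index $j$ to index $0$. The repair is to run both estimates from the same large $j$: the upper half of Proposition \ref{PrSmVarBlock} applied with $v_k=tf_k$ gives $\|\cL_{j,t}^n\|\le C e^{-c t^2\,\mathrm{Var}(S_{j,n}f)}$ for $|t|\le t_0:=\del_0/\sup_k\|f_k\|_{k,\om}$, and $\mathrm{Var}(S_{j,n}f)\to\infty$ as $n\to\infty$ since $\sig_n\to\infty$ and $S_jf$ is bounded; this contradicts the lower bound $\liminf_n\|\cL_{j,t}^n\|>0$ for $t\in\mathsf{H}$, yielding $\mathsf{H}\cap[-t_0,t_0]=\{0\}$. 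With that step supplied, your group-theoretic conclusion $\mathsf{H}=h_0\bbZ$ and the final maximality/irreducibility argument for $(Z_j)$ go through.
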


Now we are ready to fomulate our results concerning the LLT.
\begin{theorem}\label{LLT1}
Let Assumptions \ref{MixAss}, \ref{Assss}, \ref{Ball Ass} be in force.
Let $(f_j)$ be an irreducible sequence of $\mathbb{R}$ valued functions.
 Then the sequence of random variables
  $W_n=S_nf$ obeys the non-lattice LLT if  $\sig_n\to\infty$.    
\end{theorem}

As a byproduct of the arguments of the proof of Theorem \ref{LLT1} we also 
obtain the first order Edgeworth expansions.
\begin{theorem}\label{ThEdge1}
Let Assumptions \ref{MixAss}, \ref{Assss}, \ref{Ball Ass} be in force.
 If $(f_j)$ is irreducible then  with  $\sigma_n=\|S_nf-\bbE[S_nf]\|_{L^2}$ and $S_n=S_nf$,
   \begin{equation}
   \label{EdgeFirst}
\sup_{t\in\bbR}\left|\bbP((S_n-\bbE[S_n])/\sig_n\leq t)-\Phi(t)-
  \frac{\bbE[(S_n-\bbE[S_n])^3](t^3-3t)}{6\sig_n^3}\varphi(t)\right|=o(\sig_n^{-1})
   \end{equation}
   where
   $\varphi(t)=\frac{1}{\sqrt{2\pi}}e^{-t^2/2}$
   is the standard Gaussian density and 
   $ \Phi(t)=\int_{-\infty}^t \varphi(s) ds$ is the Gaussian cumulative distribution function.
\end{theorem}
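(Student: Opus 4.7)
The plan is to derive the Edgeworth expansion via Esseen's smoothing inequality combined with a three-term perturbation expansion of the characteristic function $\phi_n(t) = \bbE\bigl[e^{it(S_nf-\bbE[S_nf])/\sig_n}\bigr]$, leveraging the Fourier--Markov operator machinery already used for the LLT (Theorem \ref{LLT1}). Letting $G_n$ denote the approximating function on the right of \eqref{EdgeFirst}, Esseen's inequality bounds the supremum distance by
\[
\int_{-T}^{T}\frac{|\phi_n(t)-\hat G_n(t)|}{|t|}\,dt + O(1/T),
\]
and I would take $T$ slightly larger than $\sig_n$, partitioning the integration into a central region $|t|\le\ve\sig_n$, an intermediate band $\ve\sig_n<|t|\le A\sig_n$, and a tail $A\sig_n<|t|\le T$, then obtaining the target bound $o(\sig_n^{-1})$ on each piece.

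In the central region I would apply the sequential Perron--Frobenius theorem (Theorem \ref{RPF}) to the perturbed transfer operators $\cL_j^{s}=\cL_j(e^{isf_j}\cdot)$ at parameter $s=t/\sig_n$, and carry out analytic perturbation theory around $s=0$. The Lasota--Yorke inequality (Lemma \ref{Lasota Yorke}) provides uniform control of the block operators $\cL_{j,n}^{s}=\cL_{j+n-1}^s\cdots\cL_j^s$ in the norm $\|\cdot\|_{j,\om}$, which combined with smoothness of $s\mapsto\cL_j^{s}$ yields a three-term Taylor expansion of $\log\phi_n(t)$ around $0$ whose coefficients recover the first three cumulants of $S_nf$ (by matching the Taylor expansion of $u\mapsto\log\bbE[e^{iuS_nf}]$ at $u=0$). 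Exponentiating this expansion and comparing with $\hat G_n(t)$ gives $|\phi_n(t)-\hat G_n(t)|\le C(|t|^4+|t|^6)e^{-t^2/4}/\sig_n^2$ in the central range, so its contribution to the Esseen integral is $O(\sig_n^{-2})=o(\sig_n^{-1})$. For the intermediate and tail ranges, I would invoke the reduction lemma \ref{LmTDSmall} together with irreducibility: on any compact $K\subset\bbR\setminus\{0\}$ the block norms satisfy $\|\cL_{j,n}^{s}\|_{j,\om}\le C_K e^{-c_K n}$ uniformly in $j,n$ and $s\in K$, hence $|\phi_n(t)|$ decays exponentially in $n$ throughout those ranges, and their contribution is negligible.

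The principal obstacle is the central-region expansion: in the non-stationary setting there is no single quasi-compact operator to perturb, so the three-term Taylor expansion must be extracted entirely from the block estimates of Theorem \ref{RPF}. This requires differentiating the sequential leading eigenvectors and eigenvalues of $\cL_{j,n}^{s}$ three times in $s$ with remainders controlled uniformly in $j,n$ in the strong norm $\|\cdot\|_{j,\om}$, and then reconstructing $\phi_n(t)$ from the resulting spectral data while keeping errors at the scale $(|t|/\sig_n)^4$. A secondary technicality is verifying that the cubic coefficient produced by this perturbation expansion is precisely the one appearing in \eqref{EdgeFirst}; this follows by uniqueness of Taylor expansion at $t=0$ once one checks that the quadratic term correctly recovers the normalization by $\sig_n^2$, after which the identification of the cubic coefficient with $\bbE[(S_nf-\bbE[S_nf])^3]/\sig_n^3$ is automatic.
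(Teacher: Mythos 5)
Your overall architecture (Esseen smoothing, a cumulant expansion of the characteristic function in the central range, and decay of the twisted transfer operators away from zero) is the same as the paper's, which simply combines the central-range estimates from \cite[Section 6.6.1]{MarShif1}, the integrated bound \eqref{Suff}, and the general smoothing mechanism of \cite[Proposition 25]{DolgHaf PTRF 1} with $r=1$. The central-range part of your argument is essentially correct: the uniform analytic perturbation data needed to Taylor-expand $\log\bbE[e^{itS_n}]$ to third order with a remainder $O(|t|^4/\sig_n^2)$ is exactly what Corollary \ref{Cor1} provides, and the replacement of the eigenvalue cumulant by $\bbE[(S_n-\bbE[S_n])^3]$ only costs $O(1)$ in the third cumulant, which is harmless.

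The genuine gap is in your treatment of the intermediate and tail ranges. You assert that irreducibility together with Lemma \ref{LmTDSmall} gives a \emph{pointwise} bound $\|\cL_{0,t}^{n}\|_{*}\leq C_K e^{-c_K n}$ uniformly for $t$ in a compact set $K\subset\bbR\setminus\{0\}$. Nothing in the paper yields this, and it is false in general: $\cL_{j,t}$ is a Markov-type operator of norm $\leq 1$, and if, say, $f_j=0$ along a sparse set of indices of density one (which is compatible with irreducibility and $\sig_n\to\infty$), then $\cL_{0,t}^{n}$ contains long blocks of untwisted operators and its norm cannot decay exponentially in $n$. Even in the connected-space case the paper only obtains $Ce^{-c\sig_n}$ (Proposition \ref{ExpDecP stretched}), and in the general irreducible case the three-case block analysis of Section \ref{SSKeyPropZN} (in particular Proposition \ref{case 3}) produces only the \emph{integrated} estimate $\int_{\del\leq|t|\leq T}\|\cL_{0,t}^{n}\|_{*}\,dt=o(\sig_n^{-1})$ for each fixed $T$, not pointwise exponential decay. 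Your proof must therefore use \eqref{Suff} in place of the claimed exponential bound; this also forces a diagonalization step, because to make the $O(1/T)$ term in Esseen's inequality $o(\sig_n^{-1})$ you need the cutoff $T=A_n\sig_n$ with $A_n\to\infty$ (taking $T$ ``slightly larger than $\sig_n$'' with a bounded ratio is not enough), i.e. in the unrescaled variable you must control $\del\leq|s|\leq A_n$ with $A_n\to\infty$, which follows from the fixed-$T$ statement \eqref{Suff} only after choosing $A_n\to\infty$ sufficiently slowly. With those two repairs (integrated bound plus diagonalization, as packaged in \cite[Proposition 25]{DolgHaf PTRF 1}), your argument matches the paper's proof.
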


We note that by  the calculations at the end of \cite[Section 6.6.1]{MarShif1} we have
$ \frac{\bbE((S_nf-\bbE[S_nf])^3)}{\sigma_n^3}=O(\sig_n^{-1})$ and so the correction term $ \frac{\bbE[(S_n-\bbE[S_n])^3](t^3-3t)}{6\sig_n^3}\varphi(t)$ is of order $O(\sig_n^{-1})$.
\\

 Theorem \ref{LLT1} leads naturally to the questions how to check irreducibility.
We obtain several results in this direction, extending the results obtained in
\cite{DolgHaf LLT} to corresponding results for Markov shifts.

\begin{theorem}\label{LLT2}
Let Assumptions \ref{MixAss}, \ref{Assss}, \ref{Ball Ass} be in force.
If the spaces $\cX_j$ are connected and $\sig_n\to\infty$ then $(f_j)$ is irreducible. Therefore  
$W_n=S_nf$ obeys the non-lattice LLT. 
\end{theorem}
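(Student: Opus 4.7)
The plan is to argue by contradiction: assume $(f_j)$ is reducible, and deduce that $\sig_n$ stays bounded, contradicting the hypothesis $\sig_n\to\infty$. So suppose there exist $h\neq 0$, functions $H_j$ with $\sup_j\|H_j\|_{j,\om}<\infty$ and $(S_nH)_{n\geq 1}$ tight, and $\bbZ$-valued $Z_j$ with
$$
Z_j \;=\; h^{-1}\bigl(f_j-\ka_j(f_j)-H_j\bigr)\qquad \ka_j\text{-a.s.}
$$
Because $f_j-\ka_j(f_j)-H_j$ has finite $\|\cdot\|_{j,\om}$ norm, it admits a pointwise $\om_j$-Lipschitz representative on all of $\cX_{j,\infty}$ (extend the essentially Lipschitz function from its defining set of full $\ka_j$-measure by a standard Lipschitz extension). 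Write $\tilde Z_j$ for $h^{-1}$ times this representative.

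The heart of the argument is to show that $\tilde Z_j$ is constant on $\cX_{j,\infty}$. First, I claim Assumption \ref{Ball Ass}, combined with Assumption \ref{Assss} and the Markov property, forces $\ka_j$ to have full topological support on $\cX_{j,\infty}$: iterating the uniform lower bound $\zeta(r)>0$ across blocks of length $n_0$ shows that every basic open cylinder built from finitely many $\om$-balls in the coordinates has positive $\ka_j$-measure. Consequently the closed set $\tilde Z_j^{-1}(\bbZ)$ contains a dense subset (a set of full $\ka_j$-measure) and therefore equals all of $\cX_{j,\infty}$. Now I invoke connectedness: $\cX_{j,\infty}=\prod_{k\geq j}\cX_k$ is a product of connected metric spaces and hence connected in the product topology, and the $\om_j$-topology on $\cX_{j,\infty}$ coincides with the product topology (since $\om$ is continuous at $0$ and $a\in(0,1)$, so $\om_j$-convergence is equivalent to coordinate-wise convergence). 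A continuous integer-valued function on a connected space is constant, so $\tilde Z_j\equiv c_j$ for some $c_j\in\bbZ$.

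Therefore $Z_j=c_j$ $\ka_j$-a.s., and the reduction identity reads $f_j=\ka_j(f_j)+hc_j+H_j$ almost surely. Since the $c_j$ are deterministic they disappear upon centering, giving $S_nf-\bbE[S_nf]=S_nH-\bbE[S_nH]$ almost surely, and in particular $\text{Var}(S_nf)=\text{Var}(S_nH)$. But $(S_nH)$ is tight, so by Remark \ref{Rem Red} (or by applying Theorem \ref{Var them} to $(H_j)$), $\text{Var}(S_nH)$ is uniformly bounded. This contradicts $\sig_n\to\infty$, so $(f_j)$ must be irreducible, and the non-lattice LLT for $W_n=S_nf$ then follows from Theorem \ref{LLT1}.

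The step I expect to be the main obstacle is the clean verification that $\ka_j$ has full topological support under Assumption \ref{Ball Ass}: the assumption gives a lower bound only on backward distributions of blocks of length $n_0$ conditional on their right endpoint, so producing positivity of unconditional cylinder probabilities requires a bootstrapping argument using the Markov property to transfer support from one marginal to the next. Everything after that (the Lipschitz extension, identification of the $\om_j$-topology with the product topology, the integer-valued-on-a-connected-space step, and the tightness-to-bounded-variance conversion) is routine given the results already in the paper.
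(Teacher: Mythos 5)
Your argument is correct in substance but takes a genuinely different route from the paper. You derive irreducibility directly from Definition \ref{Irr Def}: any reduction \eqref{EqReduction} forces $Z_j=h^{-1}\bigl(f_j-\ka_j(f_j)-H_j\bigr)$ to be $\om_j$-Lipschitz, hence continuous for the product topology (coordinatewise convergence implies $\om_j$-convergence because $\om\le 1$, $a\in(0,1)$ and $\om$ is continuous at $0$), hence constant on the connected space $\cX_{j,\infty}$; then $\mathrm{Var}(S_nf)=\mathrm{Var}(S_nH)$ is bounded by Remark \ref{Rem Red}/Theorem \ref{Var them}, contradicting $\sig_n\to\infty$, and the LLT follows by invoking Theorem \ref{LLT1}. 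The paper instead proves Theorem \ref{LLT2} through Proposition \ref{ExpDecP stretched}: following \cite[Proposition 9.7]{DolgHaf LLT}, and using the same connectedness-forces-constancy observation but applied blockwise to the functions $Z_{t,j,\ell}$ of Lemma \ref{LmTDSmall} (Remark \ref{Rem conn}), it establishes $\sup_{\del\le|t|\le T}\|\cL_{0,t}^n\|_*\le Ce^{-c\sig_n}$, which yields \eqref{Suff} directly rather than via Theorem \ref{LLT1}, and gives irreducibility as a byproduct. Your route is shorter and more elementary; the paper's route buys a quantitative exponential bound on the characteristic-function integral, which is stronger than what the LLT alone requires.

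One caveat: the step you single out as the main obstacle, full topological support of $\ka_j$, is both unnecessary and not quite available as you state it. It is unnecessary because \eqref{EqReduction} is an identity of functions and the variation $v_{j,\om}$ entering $\|\cdot\|_{j,\om}$ is a genuine pointwise supremum, so $Z_j$ is everywhere $\om_j$-Lipschitz and no Lipschitz extension or density-of-a-full-measure-set argument is needed. It is also not automatic because Assumption \ref{Ball Ass} only bounds below the measure of the $\om$-balls $\tilde B_{j,n_0}(b,r)$; since $\om$ is assumed only continuous at the origin (not monotone, nor bounded away from $0$ off neighborhoods of $0$), such sets need not be contained in product-topology neighborhoods, so their positive measure does not by itself yield full support. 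If you insist on reading \eqref{EqReduction} merely $\ka_j$-almost surely, you would need an additional hypothesis on $\om$ (for instance $\inf_{d\ge\varepsilon}\om(d)>0$ for every $\varepsilon>0$) or a different density argument; under the pointwise reading your proof goes through as written.
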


\begin{theorem}\label{Thm small}
Let Assumptions \ref{MixAss}, \ref{Assss}, \ref{Ball Ass} be in force.
If $\|f_n\|_{L^\infty}\to 0$, $ \sup_n\|f_n\|_{j,\om}<\infty$  and $\sig_n\to\infty$ then $(f_n)$ is irreducible and so the non-lattice LLT holds.
\end{theorem}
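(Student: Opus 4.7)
The plan is to argue by contradiction and then invoke Theorem \ref{LLT1}. Suppose $(f_j)$ is reducible, so there exist $h\neq 0$, $\bbZ$-valued $Z_j$, and $H_j$ with $\sup_j \|H_j\|_{j,\om}<\infty$ and $(S_nH)$ tight, such that $f_j=\kappa_j(f_j)+H_j+hZ_j$.

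First I would extract basic information about $Z_j$ from the hypothesis $\|f_j\|_{L^\infty}\to 0$. Since $|\kappa_j(f_j)|\leq\|f_j\|_{L^\infty}\to 0$, the identity $hZ_j+H_j=f_j-\kappa_j(f_j)$ gives $\|hZ_j+H_j\|_{L^\infty}\to 0$. Combined with $\sup_j \|H_j\|_{L^\infty}\leq\sup_j \|H_j\|_{j,\om}<\infty$, this forces $\sup_j\|Z_j\|_{L^\infty}<\infty$. Rewriting $hZ_j=f_j-\kappa_j(f_j)-H_j$ and using the uniform bounds on $v_{j,\om}(f_j)$ and $v_{j,\om}(H_j)$ also gives $\sup_j v_{j,\om}(Z_j)<\infty$. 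So $(Z_j)$ is a uniformly bounded integer-valued sequence with uniformly bounded $\|\cdot\|_{j,\om}$-norm. In particular, the integer-valuedness combined with $\sup_j v_{j,\om}(Z_j)<\infty$ shows that $Z_j$ is constant on $\om_j$-balls of a fixed positive radius.

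Next I would show $\text{Var}(S_nZ)\to\infty$. By Remark \ref{Rem Red}, tightness of $(S_nH)$ yields $H_j=A_j+B_j-B_{j+1}$ with $\sum_j A_j$ convergent in $L^2$ and $\sup_j \|B_j\|_{L^\infty}<\infty$, hence $\sup_n\text{Var}(S_nH)<\infty$. Writing $hS_nZ=S_nf-\sum_{j<n}\kappa_j(f_j)-S_nH$, the triangle inequality in $L^2$ combined with $\sig_n\to\infty$ gives $|h|\|S_nZ-\bbE[S_nZ]\|_{L^2}\geq\sig_n-O(1)\to\infty$.

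The main obstacle is the final step: contradicting $\text{Var}(S_nZ)\to\infty$. My plan is to leverage the extra input $\|hZ_j+H_j\|_{L^\infty}\to 0$, which pins $hZ_j$ pointwise to the nearest element of $h\bbZ$ approximating $-H_j$ once $j$ is large, in order to transfer the coboundary representation $H_j=A_j+B_j-B_{j+1}$ into one for $Z_j$. Concretely, I would seek integer-valued functions $V_j$ of uniformly bounded $\|\cdot\|_{j,\om}$-norm (the natural candidate being the integer nearest to $-B_j/h$) such that $Z_j=V_j-V_{j+1}+R_j$ with $\sup_n\text{Var}\big(\sum_{j<n}R_j\big)<\infty$. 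The delicate point, analogous to the role of connectedness of $\cX_j$ in Theorem \ref{LLT2}, is to verify that this rounding preserves $\om$-continuity despite its discontinuities at half-integer values. This is where Assumption \ref{Ball Ass} enters, controlling the mass placed near the rounding thresholds, in conjunction with the Sinai-type Lemma \ref{Sinai} for two-sided paths and the reduction Lemma \ref{LmTDSmall} applied at frequency $t=2\pi/h$ (where, since $\|e^{itf_j}-1\|_{L^\infty}\leq|t|\,\|f_j\|_{L^\infty}\to 0$, the Fourier-Markov operator becomes arbitrarily close to the transfer operator, forcing any periodicity to come from a genuine, not merely approximate, coboundary identity). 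Once $Z_j$ is presented as such a coboundary, $\text{Var}(S_nZ)$ stays bounded, contradicting the previous paragraph and establishing irreducibility; Theorem \ref{LLT1} then delivers the non-lattice LLT.
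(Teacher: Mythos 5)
Your preliminary steps are fine but routine: from $\|f_j\|_{L^\infty}\to 0$ and $\sup_j\|H_j\|_{j,\om}<\infty$ you correctly get $\sup_j(\|Z_j\|_{L^\infty}+v_{j,\om}(Z_j))<\infty$, and via Remark \ref{Rem Red} that $\sup_n\mathrm{Var}(S_nH)<\infty$, hence $\mathrm{Var}(S_nZ)\to\infty$. The entire content of the theorem, however, lies in the step you yourself label ``the main obstacle,'' and that step is not carried out. The plan of rounding $-B_j/h$ to a nearest integer $V_j$ and writing $Z_j=V_j-V_{j+1}+R_j$ with $\sup_n\mathrm{Var}\bigl(\sum_{j<n}R_j\bigr)<\infty$ is precisely the assertion that needs proof, and none of the tools you cite delivers it: Assumption \ref{Ball Ass} is a lower bound for the conditional laws $P_{j,n}(\cdot|x)$ on balls in the state spaces and says nothing about the distribution of the values of $B_j$ near the rounding thresholds $\tfrac{h}{2}+h\bbZ$; Lemma \ref{Sinai} concerns replacing two-sided functionals by one-sided ones and is irrelevant here, since $f_j,H_j,Z_j,B_j$ are already one-sided; and Lemma \ref{LmTDSmall} cannot be invoked as you do, because its hypothesis \eqref{NormAlmostOne} (existence of $h$ with $\inf_x|\cL_{j,t}^{\ell}h(x)|\geq 1-\varepsilon$) has not been established, and its conclusion is a decomposition of $f_{j+\ell-1}$, not a coboundary representation of $Z_j$. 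Even granting a pointwise rounding, you give no argument that the remainder $R_j$ has bounded-variance partial sums; as written, the plan essentially restates the theorem rather than proving it.

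The observation you make in passing --- that $\|e^{itf_j}-1\|_{L^\infty}\to 0$ makes the twisted operator close to the transfer operator --- is the right lever, but it should be used quantitatively and in the opposite direction, with the tools the paper already provides, and then no rounding construction is needed. By Lemma \ref{al be}, $\|f_j\|_{j,\om^r}\to 0$ for any fixed $r\in(0,1)$; hence for any fixed $t\neq 0$ and all $j$ large (depending on $t$) one can apply Proposition \ref{PrSmVarBlock} (in the $\om^r$-norms) to $v_k=tf_k$, $k\geq j$, and the upper bound in \eqref{AUpperLower}, together with $\mathrm{Var}(S_{j,n}f)\to\infty$ as $n\to\infty$ for fixed $j$ (which follows from $\sig_n\to\infty$ by the triangle inequality in $L^2$), gives $\|\cL_{j,t}^n\|\to 0$. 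On the other hand, if $(f_j)$ were reducible, the corresponding nonzero frequency lies in the corange $\mathsf{H}$, and the first claim in the proof of Corollary \ref{H corr}(iii) (the lower bound of \eqref{AUpperLower} applied to the small part $g_j$ of the decomposition \eqref{TimesTIntRed}, plus the fact that the coboundary factors $e^{\pm i h_j}$ change operator norms only by a bounded factor) shows that $\liminf_n\|\cL_{j,t}^n\|>0$ for large $j$ at that frequency. This contradiction proves irreducibility, and Theorem \ref{LLT1} then gives the non-lattice LLT. This corange/characteristic-operator argument is the route available within the paper (mirroring the treatment of small functions in \cite{DolgHaf LLT}); your write-up leaves the decisive step unproven.
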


Next, we consider the lattice case.
\begin{theorem}\label{LLT Latt}
Let Assumptions \ref{MixAss}, \ref{Assss}, \ref{Ball Ass} be in force.
Let $(f_j)$ be an irreducible sequence  of $\mathbb{Z}$ valued functions.
Then the sequence of random variables
  $W_n=S_nf$ obeys the lattice LLT 
   if $\sig_n\to\infty$.    
\end{theorem}

In fact, a  generalized lattice LLT holds for general sequences of reducible functions (see Theorem \ref{LLT RED}). This result includes Theorem \ref{LLT Latt} as a particular case, and  together with Theorem \ref{LLT1} we get a complete  description  of the local distribution of partial  sums for the Markov shifts considered in this manuscript. Since the formulation of Theorem \ref{LLT RED} is more complicated it is postponed to Section \ref{Red sec}.

\subsection{An overview: obstructions to LLT}
\label{SSObstructions}
 The results presented above show that  there are only three obstructions for local distribution of ergodic sums to be Lebesgue:
 
 (a) {\em lattice obstruction:} the individual terms could be lattice valued in which case the sums take values in the same lattice;
 
 (b) {\em summability obstruction:} the sums can converge almost surely in which case individual terms are too large 
 to ensure the universal behavior of the sum;

(c) {\em gradient (coboundary) obstruction:} if the observable  is of the form $f_j\!\!=\!\!B_j\!\!-\!\!B_{j+1}\circ T_j$ then
the sum telescopes and the variance does not grow. 

Obstruction (a) already appears in the the iid case, say for integer valued functions. Obstruction (b) is a general one, if the steps of the ``random walk" are of the same magnitude of the sum then the CLT fails in general. Obstructions (b) and (c) mean that the sequence $S_nf$ is tight, which also implies that the CLT fails. The combination of (b) and (c) is related to  the martingale coboundary
decomposition developed by Gordin \cite{Gor}. The only works in literature where all three obstructions appear are in \cite{DolgHaf LLT, DS}, in the context of the local CLT. In this paper we get the same picture, but for more general functionals of a wide class of Markov  shifts beyond the finite state ones considered in \cite{DolgHaf LLT}.
We refer the readers to \cite[Section 1.5]{DolgHaf LLT} a detailed discussion, historic perspective and references about these obstructions and the role they play in literature.


\section{Some immediate consequences of our standing assumptions}\label{Examples}
Let us define
$$
\Delta_{j,n,\om}=\sup_{G: |G|\leq 1}\,\,\sup_{x,y\in\cX_{j+n}, x\not=y}\,\left(\om(d_{j+n}(x,y))\right)^{-1}\left|\int G(z)P_{j,n}(dz|x)-\int G(z)P_{j,n}(dz|y)\right| 
$$
where $P_{j,n}(\cdot|x)$ is the law of $(X_j,X_{j+1},...,X_{j+n-1})$ given $X_{j+n}=x$.

\begin{proposition}\label{ContAss} 
Let Assumption \ref{Assss} be in force. Then for every $j$ and $n\geq n_0$, 
$$
\Delta_{j,n,\om}\leq A_1.
$$
\end{proposition}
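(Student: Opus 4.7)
The plan is to reduce the bound on $\Delta_{j,n,\om}$ to the one-step contraction in Assumption \ref{Assss} by exploiting the backward Markov structure. Since $(X_j)$ is a Markov chain its time reversal is also Markov, and because $P_k(\cdot|\cdot)$ is by definition the backward transition kernel, the conditional law $P_{j,n}(\cdot|x)$ admits the factorization
$$P_{j,n}(dz_j,\ldots,dz_{j+n-1}|x)=P_{j+n-1}(dz_{j+n-1}|x)\prod_{k=0}^{n-2}P_{j+k}(dz_{j+k}|z_{j+k+1}).$$

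Given any measurable $G:\cX_{j,n}\to\bbR$ with $|G|\leq 1$, I would integrate out all coordinates except $z_{j+n-1}$ by setting
$$F(w)=\int G(z_j,\ldots,z_{j+n-2},w)\prod_{k=0}^{n-2}P_{j+k}(dz_{j+k}|z_{j+k+1})\bigg|_{z_{j+n-1}=w}.$$
Fubini then yields $\int G\,dP_{j,n}(\cdot|x)=\int F(w)\,P_{j+n-1}(dw|x)$, and $\|F\|_\infty\leq 1$ since $|G|\leq 1$ and each $P_{j+k}(\cdot|\cdot)$ is a probability kernel. Crucially, $F$ itself does not depend on $x$, because the inner kernels in its defining integral do not.

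Consequently, subtracting the analogous identity with $y$ in place of $x$ and applying the $L^\infty$ versus total variation duality gives
$$\left|\int G\,dP_{j,n}(\cdot|x)-\int G\,dP_{j,n}(\cdot|y)\right|\leq \|F\|_\infty\,\|P_{j+n-1}(\cdot|x)-P_{j+n-1}(\cdot|y)\|_{TV}\leq \delta\,\om(d_{j+n}(x,y)),$$
where the last inequality is Assumption \ref{Assss}. Dividing by $\om(d_{j+n}(x,y))$ and taking the supremum over admissible $G$ and over $x\neq y$ gives $\Delta_{j,n,\om}\leq\delta$. The argument is essentially routine; the only point that requires care is the reverse Markov factorization displayed above, and this is standard given that the $P_k$ are by definition backward transition kernels. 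The actual content of the proposition is that the one-step contraction propagates to conditional laws of arbitrarily long blocks with no deterioration in the constant $\delta$, precisely because after integrating out the intermediate coordinates against probability kernels the remaining dependence on the conditioning variable is concentrated in the single kernel $P_{j+n-1}(\cdot|x)$.
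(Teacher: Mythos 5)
Your proof is correct and follows essentially the same route as the paper: factorize $P_{j,n}(\cdot|x)$ via the backward Markov property so that only the kernel $P_{j+n-1}(\cdot|x)$ at the conditioning boundary depends on $x$, integrate out the remaining coordinates into a function bounded by $1$, and apply Assumption \ref{Assss} through the total variation bound. No gaps; this matches the paper's argument in both the decomposition and the final duality step.
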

\begin{proof}
Denote $P_{j}(dy|x)=P_{j,1}(dy|x)$. For every measurable function $G$ such that $|G|\leq 1$ we have
$$
\left|\int G(z)P_{j,n}(dz|x)-\int G(z)P_{j,n}(dz|y)\right|
$$
$$
=\Bigg|\int\dots\int G(z_n,...,z_1)(P_{j+n-1,n_0}(d(z_n,...,z_{n-n_0+1})|x)-P_{j+n-1,n_0}(d(z_n,...,z_{n-n_0+1})|y))
$$
$$
\times P_{j+n-1-n_0}(dz_{n-n_0}|z_{n-n_0+1})\cdots P_{j}(z_2,dz_1)\Bigg|
$$
$$
\leq\int \tilde G(z_n,...,z_{n-n_0+1})|(P_{j+n-1,n_0}(d(z_n,...,z_{n-n_0+1})|x)-P_{j+n-1,n_0}(d(z_n,...,z_{n-n_0+1})|y))|
$$
for some function $\tilde G$ which takes values in $[0,1]$. Thus, 
$$
\left|\int G(z)P_{j,n}(dz|x)-\int G(z)P_{j,n}(dz|y)\right|\leq \|P_{j+n-1,n_0}(\cdot|x)-P_{j+n-1,n_0}(\cdot|y)\|_{TV}\leq A_1\om(d_{j+n}(x,y)).
$$
\end{proof}

Next, recall that $\kappa_j$ denotes the law of $(X_j,X_{j+1},...)$ and that $\cX_{j,\infty}=\prod_{k\geq j}\cX_k$.
Let us define for $x\in\cX_{j,\infty}$ and $r>0$,
$$
\tilde B_{j}(x,r)=\{y\in\cX_{j,\infty}: \om_j(x,y)\leq r\}.
$$
\begin{proposition}\label{Ball ass2}
Under Assumption \ref{Ball Ass} we have:
\vskip0.1cm
(i) For all $r$ small enough we have
$$
\eta(r):=\inf_{j,x}\kappa_j(\tilde B_{j}(x,r))>0.
$$
\vskip0.1cm
(ii) For every $(b_j)\in\cX_{0,\infty}$ 
for $j,\ell$, $\te>0$ and $x\in\cX_{j+\ell-1,\infty}$ set
    $$
\Gamma_{j,\ell,\te,x}=\left\{(u,v): \max_{k<\ell}(a^{k}\om(d_{j+k}(u_{j+k},b_{j+k})))\leq\te/2, \,\,\max_{k<\ell}(a^{k}\om(d_{j+k}(v_{j+k},\tilde b_{j+k})) \leq \te/2\right\}\subseteq\left(\prod_{j\leq k<j+\ell}\cX_k\right)^2
$$
where $\tilde b_{j+k}=\tilde b_{j+k}(x)=b_{j+k}$ for $k<\ell-1$ and $\tilde b_{j+\ell-1}(x)=x_{j+\ell-1}$.
Then for $\te$  small enough and $\ell$ large enough
$$
\inf_{j,x}(P_{j+\ell}(\cdot|x)\times P_{j+\ell}(\cdot|x))(\Gamma_{j,\ell,\te,x})>0.
$$  
\end{proposition}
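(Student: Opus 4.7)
Both parts reduce to iterating Assumption~\ref{Ball Ass} along backward transitions of $(X_j)$ and exploiting that, since $\om\le1$, the graded modulus $\om_j$ ``sees'' only a bounded number of coordinates (depending on $r$ or $\te$).

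\emph{Part (i).} Fix small $r>0$ and let $M$ be the smallest positive multiple of $n_0$ with $a^M\le r$; write $M=kn_0$. For $n\ge M$ the inequality $a^n\om(\cdot)\le a^n\le r$ is automatic, so
$$
\tilde B_j(x,r)\supseteq\tilde B_{j,M}(x_{[j,j+M)},r)\times\cX_{j+M,\infty},
$$
where the right-hand side constrains only the first $M$ coordinates via the uniform-in-coordinate bound $\om(d_{k'}(\cdot,\cdot))\le r$. I then condition on $X_{j+M}$ and partition $[j,j+M)$ into $k$ consecutive blocks of length $n_0$. By the backward Markov property of $(X_j)$, the conditional law of the block $[j+in_0,j+(i+1)n_0)$ given its right endpoint $X_{j+(i+1)n_0}$ is $P_{j+in_0,n_0}(\cdot\mid X_{j+(i+1)n_0})$ and is independent of the strict future. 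Iterating Assumption~\ref{Ball Ass} from $i=k-1$ down to $i=0$ gives
$$
\bbP\bigl(X_{[j,j+M)}\in\tilde B_{j,M}(x_{[j,j+M)},r)\,\big|\,X_{j+M}\bigr)\ge\zeta(r)^k,
$$
uniformly in $X_{j+M}$. Integrating yields $\kappa_j(\tilde B_j(x,r))\ge\zeta(r)^k=:\eta(r)>0$, uniformly in $j$ and $x$.

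\emph{Part (ii).} The key observation is that for fixed $\te>0$, letting $N=N(\te)$ be the smallest integer with $a^N\le\te/2$, the constraint $a^k\om(d_{j+k}(\cdot,\cdot))\le\te/2$ is automatic for every $k\ge N$. Hence, choosing $\ell>N$, the last-coordinate constraint at $k=\ell-1$ is vacuous, so the asymmetry $\tilde b_{j+\ell-1}=x_{j+\ell-1}\neq b_{j+\ell-1}$ is inert, and the event $\Gamma_{j,\ell,\te,x}$ factors as $A\times A$, where $A=\{\text{the first }N\text{ coordinates lie in the graded }\tfrac{\te}{2}\text{-ball around }b\}$. Under the product measure,
$$
(P_{j+\ell}(\cdot|x)\times P_{j+\ell}(\cdot|x))(\Gamma_{j,\ell,\te,x})=P_{j+\ell}(A\mid x)^{2}.
$$
The Part~(i) argument applied to the conditional law $P_{j+\ell}(\cdot\mid x)$---with $r$ replaced by $\te/2$ and $M$ the smallest multiple of $n_0$ exceeding $N$---produces $P_{j+\ell}(A\mid x)\ge\zeta(\te/2)^{M/n_0}$ uniformly in $j,x$ (the outer conditioning on $x$ drops out by the backward Markov property once one conditions on $X_{j+M}$). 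Squaring gives the claimed uniform positive lower bound.

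\emph{Main obstacle.} The principal technical point is the backward-Markov bookkeeping: ensuring that the uniformity in the right endpoint supplied by Assumption~\ref{Ball Ass} survives the iterated conditioning on consecutive block endpoints, and in Part~(ii) also the outer conditioning on the future $x$. Once this is in place, the finite-horizon nature of $\om_j$ renders the endpoint asymmetry in $\tilde b$ harmless, and both statements reduce to the same block estimate.
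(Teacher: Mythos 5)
Your proof is correct and follows essentially the same route as the paper: since $\om\le 1$ and $a^n\to 0$, the graded ball constrains only the first $O(\log(1/r))$ coordinates, and the bound follows by iterating Assumption \ref{Ball Ass} over consecutive length-$n_0$ blocks via backward conditioning on block endpoints, with part (ii) reduced to part (i) because $\Gamma_{j,\ell,\te,x}$ is a product of two such balls (your extra remark that the modified endpoint $\tilde b_{j+\ell-1}$ is inert for $\ell$ large is a harmless refinement of the paper's terser argument).
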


\begin{proof}
(i) For $x\in\cX_{j,\infty}$ and $r>0$ we have
$$
\kappa_j(\tilde B_j(x,r))=\bbP\left(\sup_{n\geq0}(a^{n}\om(d_{j+n}(X_{j+n},x_{j+n})))\leq (2A_1)^{-1}r\right).
$$
Now, since $\om(\cdot)\leq 1$ we see that if $n_r\geq n_0$ satisfies $(2A_1)a^{n_r}<r$ and $r$ is small enough
$$
\kappa_j(\tilde B_j(x,r))=\bbP\left((2A_1)\om(d_{j+n}(X_{j+n},x_{j+n}))\leq a^{-n}r: \forall n<n_r\right)\geq \left(\zeta\left((2A_1)^{-1}r\right)\right)^{[n_r/n_0]+1}:=\eta(r)>0 
$$
where $\zeta(r)>0$ comes from Assumption \ref{Ball Ass}. The verification of part (ii) is similar since the set $\Gamma_{j,\ell,\te,x}$ is just a product of two ``balls" $\tilde B_{j,n}$ in $\prod_{j\leq k<j+\ell}\cX_k$.
\end{proof}

\section{Non-autonomous functional analytic tools}
\subsection{Norms of functions}
\begin{lemma}\label{al be}
Let us take $r\in(0,1)$.
If $g_j:\cX_{j,\infty}\to\bbR$ satisfy $\sup_{j}\|g\|_{j,\om}<\infty$ and $\lim_{j\to\infty}\|g\|_{L^1(\ka_j)}=0$ then $\lim_{j\to\infty}\|g\|_{j,\omega^{r}}=0$.   
\end{lemma}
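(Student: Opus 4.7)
The plan is to decompose $\|g_j\|_{j,\om^r}=\|g_j\|_{L^\infty(\ka_j)}+v_{j,\om^r}(g_j)$ and to control both pieces by H\"older-type interpolation between the uniform $\om_j$-variation bound and an $L^\infty$ bound, with the weaker seminorm $v_{j,\om^r}(\cdot)$ understood as measuring differences against the coarser distance $\om_j(x,y)^r$ (larger than $\om_j(x,y)$ since $\om_j\le 1$ and $r<1$). The key preliminary task is to promote the $L^1(\ka_j)$-vanishing hypothesis to $L^\infty(\ka_j)$-vanishing; once that is in hand, the interpolation is routine.

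For the $L^\infty$-promotion I would argue by contradiction. Set $M:=\sup_j v_{j,\om}(g_j)<\infty$ and suppose $\|g_{j_k}\|_{L^\infty(\ka_{j_k})}\ge\ve$ along a subsequence, with $\ve$ chosen small enough that $\ve/(4M)$ lies in the small-radius regime of Assumption \ref{Ball Ass}. By Proposition \ref{Ball ass2}(i) every $\tilde B_{j_k}$-ball has uniformly positive $\ka_{j_k}$-mass, so $\ka_{j_k}$ has full support and the essential supremum agrees with the pointwise supremum on the (automatically $\om_{j_k}$-continuous) function $g_{j_k}$. Pick $x_{j_k}^*$ with $|g_{j_k}(x_{j_k}^*)|\ge 3\ve/4$; the Lipschitz estimate $|g_{j_k}(x_{j_k}^*)-g_{j_k}(z)|\le M\om_{j_k}(x_{j_k}^*,z)$ then forces $|g_{j_k}|\ge \ve/2$ throughout $\tilde B_{j_k}(x_{j_k}^*,\ve/(4M))$, a set of $\ka_{j_k}$-mass at least $\eta(\ve/(4M))>0$. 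Hence $\|g_{j_k}\|_{L^1(\ka_{j_k})}\ge (\ve/2)\eta(\ve/(4M))>0$, contradicting the hypothesis.

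For the variation step, given $\ve>0$ choose $\tau>0$ so small that $M\tau^{1-r}<\ve/2$, and then take $j$ large enough (via the previous step) that $\|g_j\|_{L^\infty(\ka_j)}<\ve\tau^r/4$. For distinct $x,y\in\cX_{j,\infty}$, split by the size of $\om_j(x,y)$. If $\om_j(x,y)\ge\tau$ use the $L^\infty$ bound,
$$\frac{|g_j(x)-g_j(y)|}{\om_j(x,y)^r}\le \frac{2\|g_j\|_{L^\infty(\ka_j)}}{\tau^r}<\ve/2,$$
while if $\om_j(x,y)<\tau$ use the strong variation,
$$\frac{|g_j(x)-g_j(y)|}{\om_j(x,y)^r}\le M\cdot\frac{\om_j(x,y)}{\om_j(x,y)^r}=M\om_j(x,y)^{1-r}<M\tau^{1-r}<\ve/2.$$
Taking the supremum yields $v_{j,\om^r}(g_j)\le\ve$ for all large $j$, hence $\|g_j\|_{j,\om^r}\to 0$.

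The substantive part is the $L^\infty$-promotion, which relies essentially on Assumption \ref{Ball Ass} through Proposition \ref{Ball ass2}(i); without a uniform lower bound on ball measures one could easily construct bump-like $g_j$ with vanishing $L^1$ but non-vanishing $L^\infty$ (and hence non-vanishing $\om^r$-variation). The interpolation is classical once the weak distance $\om_j^r$ is identified correctly.
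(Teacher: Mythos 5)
Your proof is correct and follows essentially the same route as the paper: reduce the $\om^r$-variation to the sup-norm by interpolation against the uniform $\om_j$-variation bound, and then upgrade $L^1(\ka_j)$-smallness to $L^\infty$-smallness using the uniform lower bound on the measure of the balls $\tilde B_j(x,r)$ from Proposition \ref{Ball ass2}(i) together with equicontinuity. The only cosmetic differences are that the paper shows $\|g_j\|_\infty\to 0$ directly by comparing $g_j(x)$ with its average over a small ball (rather than your contrapositive/contradiction formulation), and uses the multiplicative interpolation $|g_j(x)-g_j(y)|\le (2\|g_j\|_\infty)^{1-r}|g_j(x)-g_j(y)|^{r}$ instead of your case split in $\om_j(x,y)$.
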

\begin{proof}
First, we have 
$$
|g_j(x)-g_j(y)|\leq 2\|g\|_\infty^{1-r}|g_j(x)-g_j(y)|^{r}\leq C\|g_j\|_{\infty}^{1-r}(\omega_j(x,y))^r
$$
and so it is enough to show that $\|g_j\|_{\infty}\to 0$ as $j\to\infty$. To prove that, we note that there is a constant $A>0$ such that
$$
\left|g_j(x)-\frac{1}{\kappa_j(\tilde B_j(x,\varepsilon))}\int_{\tilde B_j(x,\varepsilon)}g_j(y)d\ka_j(y)\right|\leq A\sup_{u\leq \varepsilon}\omega(u),
$$
where we recall that $\tilde B_j(x,\varepsilon)=\{y: \om_j(x,y)\leq\varepsilon\}$ (note that by Proposition \ref{Ball ass2} this set has positive $\kappa_j$ measure). Thus, by taking $j\to\infty$ we see that 
$$
\limsup_{j\to\infty}\|g_j\|_{\infty}\leq A\sup_{u\leq \varepsilon}\omega(u).
$$
Since this is true for every $\varepsilon$  and $\lim_{u\to 0}\om(u)=0$ we conclude that $\lim_{j\to\infty}\|g_j\|_{\infty}=0$.
\end{proof}
\subsection{A sequential Perron Frobenius theorem}
Define
$$
\cL_j g(x)=\bbE[g(X_{j},X_{j+1},...)|(X_{j+k})_{k\geq 1}=x]=\int g(b,x)P_{j,1}(db,x).
$$
Denote 
$$
\cL_j^n=\cL_{j+n-1}\circ\cdots\circ\cL_{j+1}\circ\cL_j.
$$
Then 
$$
\cL_j^n g(x)=\bbE[g(X_{j},X_{j+1},...)|(X_{j+k})_{k\geq n}=x]=\int g(b,x)P_{j,n}(db,x).
$$

\begin{theorem}\label{RPF}
Let Assumptions \ref{MixAss}, \ref{Assss} and \ref{Ball Ass} be in force.
For every  $j\geq0$, $n\in\bbN$ and $g:\cX_{j,\infty}\to\bbR$ with $\|g\|_{j,\om}<\infty$,
$$
\|\mathcal L_j^n(g)-\kappa_j(g)\|_{j+n,\omega}\leq C\|g\|_{j,\omega}\zeta^n
$$
for some constants $C>0$ and $\zeta\in(0,1)$ which depend (explicitly) only on $n_0$, $\omega$, $\delta$ and $\gamma$ from Assumptions \ref{MixAss}, \ref{Assss} and \ref{Ball Ass}.
\end{theorem}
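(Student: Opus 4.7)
My plan is to establish Theorem \ref{RPF} by combining a Lasota--Yorke--type inequality for the variation seminorm with an $L^\infty$ contraction obtained by approximating $g$ by functions of finitely many coordinates; the key preliminary observation is that $\kappa_{j+n}(\cL_j^n g)=\kappa_j(g)$, so $\cL_j^n g-\kappa_j(g)$ is centered under $\kappa_{j+n}$. For the Lasota--Yorke step, given $x,y\in\cX_{j+n,\infty}$ I would write
$$\cL_j^n g(x)-\cL_j^n g(y)=\int[g(b,x)-g(b,y)]\,P_{j,n}(db|x_{j+n})+\int g(b,y)\bigl[P_{j,n}(db|x_{j+n})-P_{j,n}(db|y_{j+n})\bigr].$$
Since $(b,x)$ and $(b,y)$ agree on the first $n$ coordinates, the definition of $\omega_j$ yields $\omega_j((b,x),(b,y))=a^n\omega_{j+n}(x,y)$, so the first integrand is bounded by $v_{j,\omega}(g)\,a^n\omega_{j+n}(x,y)$. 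Proposition \ref{ContAss} applied to $G(b)=g(b,y)/\|g\|_\infty$ then controls the second integral by $\delta\|g\|_\infty\omega(d_{j+n}(x_{j+n},y_{j+n}))\le\delta\|g\|_\infty\omega_{j+n}(x,y)$. Altogether this gives
$$v_{j+n,\omega}(\cL_j^n g)\le a^n v_{j,\omega}(g)+\delta\|g\|_\infty.$$

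For the $L^\infty$ contraction I would approximate $g$ by $g_r:=\bbE[g|X_j,\ldots,X_{j+r}]$; Remark \ref{RemA} gives $\|g-g_r\|_\infty\le v_{j,\omega}(g)\,a^r$. When $n>r$, the Markov property makes $\cL_j^n g_r(x)$ a function of $x_{j+n}$ alone, and iterating the one-step Dobrushin contraction (whose coefficient is $\pi_k\le\delta$, a direct consequence of Assumption \ref{Assss}) shows that the conditional law of $(X_j,\ldots,X_{j+r})$ given $X_{j+n}=z$ lies within $\delta^{n-r}$ in total variation of its unconditional law, uniformly in $z$. Hence $\|\cL_j^n g_r-\kappa_j(g_r)\|_\infty\le\|g\|_\infty\,\delta^{n-r}$, and the triangle inequality together with $r:=\lfloor n/2\rfloor$ yields
$$\|\cL_j^n g-\kappa_j(g)\|_\infty\le C_1\gamma_1^n\|g\|_{j,\omega},\qquad \gamma_1:=\max(\sqrt{a},\sqrt{\delta})\in(0,1).$$

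To convert these into geometric decay of the full norm I would exploit the identity $\cL_j^n g-\kappa_j(g)=\cL_{j+m}^{n-m}(\cL_j^m g-\kappa_j(g))$: applying the Lasota--Yorke step to $h:=\cL_j^m g-\kappa_j(g)$ and invoking $\|h\|_\infty\le C_1\gamma_1^m\|g\|_{j,\omega}$ from the previous step gives
$$v_{j+n,\omega}(\cL_j^n g)\le a^{n-m}\bigl(a^m v_{j,\omega}(g)+\delta\|g\|_\infty\bigr)+\delta C_1\gamma_1^m\|g\|_{j,\omega},$$
and choosing $m=\lfloor n/2\rfloor$ makes all three terms decay geometrically. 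Combined with the sup-norm estimate this yields the theorem with $\gamma:=\max(\sqrt{a},\sqrt{\gamma_1})$ and an explicit constant $C$. The main obstacle is the $L^\infty$ step: because $g$ depends on the entire infinite path $(X_j,X_{j+1},\ldots)$, the classical Doeblin/small-set argument for functions of a single variable does not apply directly, and one must use that the graded modulus $\omega_j(x,y)=\sup_n a^n\omega(d_{j+n}(x_{j+n},y_{j+n}))$ built into the norm makes $g$ exponentially well-approximated by functions of the first $r+1$ coordinates, reducing matters to an iterated one-variable Dobrushin contraction. (Assumption \ref{Ball Ass} does not appear to be needed for this particular theorem; only Assumption \ref{Assss}, via Proposition \ref{ContAss}, enters.)
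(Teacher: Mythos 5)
Your proof is correct and follows essentially the same route as the paper's: approximate $g$ by a function of finitely many coordinates and exploit the backward Dobrushin contraction (the paper invokes $\phi_R(n)\le\delta^n$ from Lemma \ref{Phi Lemma}) for the $L^\infty$ part, and use the identical two-term decomposition with Proposition \ref{ContAss} to bound the variation. The only differences are cosmetic — the paper approximates by $\inf_y g(\cdot,y)$ rather than by a conditional expectation, and it obtains geometric decay by contracting the full $\|\cdot\|_{j+n,\omega}$-norm by a fixed factor over sufficiently long blocks and iterating, whereas you split $n$ into halves; your remark that Assumption \ref{Ball Ass} is not actually needed is likewise consistent with the paper's argument, which only uses Assumption \ref{Assss}.
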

\begin{proof}
Let us take a function $g$ such that $\|g\|_{j,\omega}\leq1$.
Let us denote 
$$
\bar g_k(x_j,...,x_{j+k-1})=\inf_{y\in\cX_{j+k,\infty}}g(x_j,...,x_{j+k-1},y).
$$
Then, since $\om(\cdot)\leq 1$,
$$
\sup_{x\in\cX_{j,\infty}}|g_k(x)-\bar g_k(x_j,...,x_{j+k-1})|\leq 2A_1 a^{k}.
$$
Therefore,    
$$
\|\cL_{j}^n g-\kappa_j(g)\|_{L^\infty}\leq  2A_1a^{[n/2]}+\|\bbE[\bar g_{[n/2]}|\cF_{j+n,\infty}]-\bbE[\bar g_{[n/2]}]\|_{L^\infty}.
$$
Next, recall that (see \cite[Ch.4]{Brad}),
$$
\phi_R(n)=\frac12\sup_k\sup\{\|\bbE[h|\cF_{k+n,\infty}]-\bbE[h]\|_{L^\infty}:h\in L^\infty(\cF_{0,k}): \|h\|_{L^\infty}\leq1\}.
$$
Using this together with Assumption \ref{MixAss} we see that
$$
\|\cL_{j}^n g-\kappa_j(g)\|_{L^\infty}\leq 4A_1a^{[n/2]}+2\phi_R([n/2])\leq 4A_1a^{[n/2]}+2A\gamma^n.
$$
Next, to estimate $v_{j+n,\om}(\cL_{j}^n g-\kappa_j(g))$ it is clearly enough to estimate it when $\kappa_j(g)=0$. We have 
$$
\cL_{j}^n g(x)-\cL_{j}^n g(y)=\int g(b,x)P_{j,n}(db|x)-\int g(b,y)P_{j,n}(da|y)
$$
$$
=\int (g(b,x)-g(b,y))P_{j,n}(db|x)+\int g(b,y)(P_{j,n}(db|x)-P_{j,n}(db|y)):=I_1+I_2.
$$
Now, as 
$$
|g(b,x)-g(b,y)|\leq\om_j((b,x),(b,y))\leq a^{n}\om_{j+n}(x,y)
$$
we see that 
$$
|I_1|\leq a^{n}\om_{j+n}(x,y)
$$
On the other hand by  and Proposition \ref{ContAss},
$$
|I_2|\leq A_1\om(d_{j+n}(x,y))=\frac12(2A_1\om_{j+n}(x,y))\leq\frac12\om_{j+n}(x,y). 
$$
Thus, 
$$
v_{j+n,\om}(\cL_{j}^ng)\leq \frac12+a^{n}. 
$$
Taking $n$ large enough we see that 
$$
\|\mathcal L_j^n(g)-\kappa_j(g)\|_{j+n,\om}\leq\frac34\|g\|_{j,\om}.
$$
Thus, by iterating we see that 
$$
\|\mathcal L_j^n(g)-\kappa_j(g)\|_{j+n,\om}\leq C'\|g\|_{j,\om}\zeta^n
$$
for some $C'>0$ and $\zeta\in(0,1)$.
\end{proof}

\subsection{Norms of complex transfer operators}
\subsubsection{\textbf{A complex Perron-Frobenius theorem}}
For a complex number $z$ let us define 
$$
L_{j,z}(h)=\cL_j(e^{zf_j}h).
$$
Let us also define $\cL_{j,t}=L_{j,it}$.
Denote by $B_j=\cB_{j,\om}$  the space of all functions $g:\cX_{j,\infty}\to\bbC$ such that $\|g\|_{j,\om}<\infty$. Let $B_j^*$ denote its dual space.
Then, since $\sup_j\|f_j\|_{j,\om}<\infty$ we see that $L_{j,z}$ are uniformly analytic in $z$.
By applying \cite[Theorem D.2]{DolgHaf PTRF 2} we get the following corollary of Theorem \ref{RPF}. 
\begin{corollary}\label{Cor1}
Let Assumptions \ref{MixAss}, \ref{Assss} and \ref{Ball Ass} be in force.
There exists $0<\delta_0<1$ such that for every $z\in\bbC$ with $|z|\leq\delta_0$ there are $\lambda_j(z)\in\bbC\setminus\{0\}$, $h_j^{(z)}\in B_j$ and $\ka_j^{(z)}\in B_j^*$ such that $\ka_j^{(z)}(\textbf{1})=\ka_j^{(z)}(h_j^{(z)})=1$, 
 $\lambda_j(0)=1$, $h_j^{(0)}=\textbf{1}$, $\ka_j^{(0)}=\ka_j$
and
\begin{equation}\label{UP11}
 L_{j,z} h_j^{(z)}=\lambda_j(z)h_{j+1}^{(z)},\, (\mathcal L_{j,z})^*\ka_{j+1}^{(z)}=\lambda_j(z)\ka_{j}^{(z)}.   
\end{equation}
Moreover, $z\to\lambda_j(z)$, $z\to h_j^{(z)}$ and $z\to\ka_j^{(z)}$ are analytic functions of $z$ with uniformly (over $z$ and $j$) bounded norms.
Finally,  there are $C_1>0, \delta_1\in(0,1)$ such that for every $g\in B_j$ and all $n$,
\begin{equation}\label{Exp11}
 \left\|L_j^{z,n}g-\lambda_{j,n}(z)\ka_j^{(z)}(g)h_{j+n}^{(z)}\right\|_{j+n,\om}\leq C_1\|g\|_{j,\om}\delta_1^n
\end{equation}
where $\lambda_{j,n}(z)=\prod_{k=j}^{j+n-1}\lambda_k(z)$. 
\end{corollary}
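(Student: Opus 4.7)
The plan is to view $L_{j,z}$ as an analytic perturbation (in $z$) of $L_{j,0}=\cL_j$ on the Banach space $\cB_{j,\om}$, and then to invoke the abstract sequential perturbation result \cite[Theorem D.2]{DolgHaf PTRF 2}, which promotes the real spectral gap of Theorem \ref{RPF} into a complex one with analytic eigentriples. Concretely, two ingredients must be extracted before the abstract theorem is applied: (a) the family $\{\cL_j\}$ satisfies a uniform sequential Perron--Frobenius theorem on $\cB_{j,\om}$ with one-dimensional leading spectral projection $g\mapsto\ka_j(g)\mathbf{1}$ and a uniform exponential gap; (b) the map $z\mapsto L_{j,z}$ is a norm-analytic family of bounded operators $\cB_{j,\om}\to\cB_{j+1,\om}$, uniformly in $j$, on some disk $\{|z|\leq r_0\}$.

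Ingredient (a) is precisely Theorem \ref{RPF}. For (b), I would first observe that $\cL_j:\cB_{j,\om}\to\cB_{j+1,\om}$ has uniformly bounded operator norm: the $L^\infty$ part is trivial since $\cL_j$ preserves $\|\cdot\|_\infty$, and the variation part is already contained in the $I_1,I_2$ estimates in the proof of Theorem \ref{RPF} specialized to $n=1$, which give $v_{j+1,\om}(\cL_j g)\leq (\del+a)v_{j,\om}(g)$. Next, because $\cB_{j,\om}$ is a Banach algebra under pointwise multiplication with $\|gh\|_{j,\om}\leq 2\|g\|_{j,\om}\|h\|_{j,\om}$ and $\sup_j\|f_j\|_{j,\om}<\infty$, the multiplication operator $M_{e^{zf_j}}$ is given by an absolutely convergent power series $\sum_{k\geq 0}z^k M_{f_j^k}/k!$ in operator norm with a lower bound on the radius of convergence that is uniform in $j$. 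Composition then shows that $L_{j,z}=\cL_j\circ M_{e^{zf_j}}$ is an analytic family of operators with the required uniform bounds.

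With (a) and (b) in hand, \cite[Theorem D.2]{DolgHaf PTRF 2} provides, for $|z|\leq\delta_0$ with $\delta_0$ small enough, analytic families $\lambda_j(z),h_j^{(z)},\ka_j^{(z)}$ satisfying the intertwining identities \eqref{UP11}, together with a block decomposition
\[
L_{j,z}^n g=\lambda_{j,n}(z)\ka_j^{(z)}(g)h_{j+n}^{(z)}+R_{j,z}^n g,\qquad \|R_{j,z}^n\|_{j,\om\to j+n,\om}\leq C_1\delta_1^n,
\]
which is exactly \eqref{Exp11}. The normalizations $\ka_j^{(z)}(\mathbf{1})=\ka_j^{(z)}(h_j^{(z)})=1$ come from the standard rescaling of the rank-one projection, and continuity at $z=0$ pins down $\lambda_j(0)=1$, $h_j^{(0)}=\mathbf{1}$, $\ka_j^{(0)}=\ka_j$ from Theorem \ref{RPF}. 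Uniform bounds on the $z$-derivatives then follow from Cauchy's integral formula on a fixed smaller disk. The main obstacle, and the reason the abstract machinery is genuinely needed, is ensuring that all the constants ($\delta_0$, $\delta_1$, $C_1$, and the derivative bounds) are truly uniform in $j$; this uniformity is powered by the uniform quantitative form of Theorem \ref{RPF} together with the uniform bound $\sup_j\|f_j\|_{j,\om}<\infty$ that feeds into the analyticity estimate for $M_{e^{zf_j}}$.
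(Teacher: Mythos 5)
Your proposal is correct and follows essentially the same route as the paper: the paper also obtains Corollary \ref{Cor1} by noting that $\sup_j\|f_j\|_{j,\om}<\infty$ makes $z\mapsto L_{j,z}$ uniformly analytic on $\cB_{j,\om}$ and then applying the abstract sequential perturbation theorem \cite[Theorem D.2]{DolgHaf PTRF 2} to the uniform spectral gap furnished by Theorem \ref{RPF}. You simply spell out the analyticity and uniformity details (Banach algebra estimate, Cauchy integral bounds) that the paper leaves implicit.
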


\subsubsection{\textbf{Lasota Yorke inequality and related results}}
Let us next prove a Lasota York type inequality.
\begin{lemma}\label{Lasota Yorke}
Let Assumptions \ref{Assss} and \ref{Ball Ass} be in force.
 Let $T>1$. Then there exists $C_1=c(T)>0$ such that for every $h\in\cB_{j,\om}$, 
 $$
\sup_{|t|\leq T}\|\cL_{j,t}^n h\|_{j+n,\om}\leq C_1(\|h\|_{L^\infty}+a^{n}v_{j,\omega}(h)).
 $$
\end{lemma}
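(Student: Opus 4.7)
The plan is to exploit the explicit iterated representation
$$\cL_{j,t}^n h(x)=\int e^{it\tilde S_n f(b,x)}\,h(b,x)\,P_{j,n}(db|x),$$
where $\tilde S_n f(b,x)=\sum_{k=0}^{n-1}f_{j+k}(b_{j+k},\ldots,b_{j+n-1},x)$, and to split the difference $\cL_{j,t}^n h(x)-\cL_{j,t}^n h(y)$ into
$$\int\bigl[e^{it\tilde S_n f(b,x)}h(b,x)-e^{it\tilde S_n f(b,y)}h(b,y)\bigr]P_{j,n}(db|x)\;+\;\int e^{it\tilde S_n f(b,y)}h(b,y)\,[P_{j,n}(db|x)-P_{j,n}(db|y)].$$
The $L^\infty$ bound is trivial since $P_{j,n}(\cdot|x)$ is a probability measure. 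For the variation, the central ingredient is the estimate
$$\om_j((b,x),(b,y))\le a^n\om_{j+n}(x,y),$$
which follows straight from the graded definition of $\om_j$ (the first $n$ coordinates agree, so the supremum only sees indices $\ge n$, yielding the geometric factor $a^n$).

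For the first integral, I would write the integrand difference as
$$e^{it\tilde S_n f(b,x)}(h(b,x)-h(b,y))\;+\;h(b,y)(e^{it\tilde S_n f(b,x)}-e^{it\tilde S_n f(b,y)})$$
and bound the two pieces separately. The first piece is dominated by $a^n v_{j,\om}(h)\om_{j+n}(x,y)$ by the factor-of-$a^n$ estimate above. For the second piece I use $|e^{is}-e^{is'}|\le|t||s-s'|$, then bound each term
$$|f_{j+k}(b_{j+k},\ldots,b_{j+n-1},x)-f_{j+k}(b_{j+k},\ldots,b_{j+n-1},y)|\le v_{j+k,\om}(f_{j+k})a^{n-k}\om_{j+n}(x,y),$$
by the same graded-norm computation applied to $f_{j+k}$. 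Summing the geometric series $\sum_{k=0}^{n-1}a^{n-k}\le a/(1-a)$ and using $\sup_j\|f_j\|_{j,\om}<\infty$, this part contributes $CT\|h\|_{L^\infty}\om_{j+n}(x,y)$ after integrating. So the first integral is controlled by $(a^n v_{j,\om}(h)+C_T\|h\|_{L^\infty})\om_{j+n}(x,y)$.

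For the second integral, the integrand $G(b)=e^{it\tilde S_n f(b,y)}h(b,y)$, viewed as a function of $b\in\cX_{j,j+n-1}$, is bounded in modulus by $\|h\|_{L^\infty}$. Proposition \ref{ContAss} gives, for any bounded $G$,
$$\Bigl|\int G\,d\bigl(P_{j,n}(\cdot|x)-P_{j,n}(\cdot|y)\bigr)\Bigr|\le \delta\|G\|_{L^\infty}\om(d_{j+n}(x,y))\le \delta\|h\|_{L^\infty}\om_{j+n}(x,y),$$
since $\om(d_{j+n}(x,y))\le\om_{j+n}(x,y)$ by taking $m=0$ in the graded supremum. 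Combining the two integrals and adding the trivial $L^\infty$ estimate yields the claimed Lasota--Yorke inequality with $C_1=c(T)=1+\delta+T\cdot\frac{a}{1-a}\sup_j\|f_j\|_{j,\om}$.

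There is no real obstacle here: the proof is a direct computation, and the only delicate bookkeeping is the repeated use of the graded structure of $\om_j$, which is precisely what produces the contraction factor $a^n$ in front of $v_{j,\om}(h)$. The rest is linear control of phases via $|e^{is}-e^{is'}|\le|s-s'|$ together with Proposition \ref{ContAss}.
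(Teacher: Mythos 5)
Your proposal is correct and follows essentially the same route as the paper: the same integral representation of $\cL_{j,t}^n h$, the same three-term splitting (your two nested splits reproduce exactly the paper's $I_1,I_2,I_3$), the same graded estimate $\om_j((b,x),(b,y))\le a^n\om_{j+n}(x,y)$ giving the factor $a^n$ in front of $v_{j,\om}(h)$, and the same appeal to Proposition \ref{ContAss} for the total-variation term. The only (harmless) difference is that you track an explicit constant, and the minor typo $|e^{is}-e^{is'}|\le|t||s-s'|$ should read $|e^{is}-e^{is'}|\le|s-s'|$ with the factor $|t|$ coming from the phase $t S_{j,n}f$.
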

\begin{proof}
 First, without loss of generality, since the spaces are metric we can replace the essential supremum by the regular supremum. Indeed, the set of continuous functions  is separable and we can just disregard sets of measure $0$. Next, we have
 $$
\|\cL_{j,t}^n h\|_{L^\infty}\leq\|h\|_{L^\infty}.
 $$
Now, write
$$
\cL_{j,t}^n h(x)=\int e^{itS_{j,n}f(b,x)}h(b,x)P_{j,n}(db|x).
$$
Then
$$
|\cL_{j,t}^n h(x)-\cL_{j,t}^n h(y)|\leq I_1+I_2+I_3 
$$
where 
$$
I_1=|t|\int |S_{j,n}f(b,x)-S_{j,n}f(b,y)||h(b,x)|P_{j,n}(db|x), \,\,\,I_2= \int |h(b,x)-h(b,y)|P_{j,n}(db|x)
$$
and 
$$
I_3=\int |h(b,y)||P_{j,n}(db|x)-P_{j,n}(db|y)|.
$$
Notice that 
$$
|S_{j,n}f(b,x)-S_{j,n}f(b,y)|\leq \sum_{k=j}^{j+m-1}|f_k(b_k,...,b_{j+n-1},x)-f_k(b_k,...,b_{j+n-1},y)|
$$
$$
\leq C\sum_{k=j}^{j+n-1}a^{(j+n-k)}\om_{j+n}(x,y)\leq C\om_{j+n}(x,y)
$$
for some constants $C$ that depend only on $\sup_kv_{k,\om}(f_k)$. 
Thus,
$$
I_1\leq CT\|h\|_{L^\infty}\om_{j+n}(x,y).
$$
Moreover,
$$
|h(b,x)-h(b,y)|\leq v_{j,\om}(h)\om_j((b,x),(b,y))\leq v_{j,\om}(h)\om_j((b,x),(b,y))\leq v_{j,\om}(h)a^{n}\om_{j+n}(x,y).
$$
Therefore, 
$$
I_2\leq v_{j,\om}(h)a^{n}\om_{j+n}(x,y)
$$
Finally, by Proposition \ref{ContAss}, 
$$
I_3\leq A_1\|h\|_{L^\infty}\om_{j+n}(x,y).
$$
\end{proof}

 Next, let us define   
 $$
\|g\|_{j,*}=\max(\|g\|_{L^\infty},v_{j,\om}(g)/(2C_1)).
 $$
Then by Lemma \ref{Lasota Yorke} and taking into account that $\|\cL_{j,t}^nh\|_{L^\infty}\leq \|h\|_{L^\infty}$ we have get the followin result.
\begin{corollary}\label{Corollary 4.8}
Let $k_0$ be the smallest positive integer such that $2C_1 a^{k_0}\leq 1$. Then
\begin{equation}\label{Norm bound}
\sup_{|t|\leq T}\sup_j\sup_{n\geq k_0}\|\cL_{j,t}^n\|_{j+n,*}\leq 1.   
\end{equation}
\end{corollary}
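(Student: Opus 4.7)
The plan is to estimate the two summands appearing in the definition $\|g\|_{j+n,*}=\max(\|g\|_{L^\infty},v_{j+n,\om}(g)/(2C_1))$ separately against $\|h\|_{j,*}$ and then to take their maximum. The corollary is essentially a bookkeeping consequence of Lemma \ref{Lasota Yorke}, once one observes that the constant $2C_1$ built into the definition of $\|\cdot\|_{j,*}$ has been calibrated precisely to cancel the loss coming from the Lasota–Yorke constant $C_1$ against the small factor $a^n$.

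For the $L^\infty$ part, the operator $\cL_{j,t}$ is $\cL_j$ composed with multiplication by the unit-modulus function $e^{itf_j}$, and $\cL_j$ is a conditional-expectation-type averaging operator against the probability kernel $P_{j,1}(\cdot|x)$, hence a contraction in $\|\cdot\|_{L^\infty}$. Iterating gives $\|\cL_{j,t}^n h\|_{L^\infty}\leq\|h\|_{L^\infty}\leq\|h\|_{j,*}$, handling the first summand uniformly in $t$, $j$ and $n$.

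For the variation part, Lemma \ref{Lasota Yorke} directly provides the bound $v_{j+n,\om}(\cL_{j,t}^n h)\leq C_1\|h\|_{L^\infty}+C_1 a^n v_{j,\om}(h)$. Dividing by $2C_1$ and using the inequalities $\|h\|_{L^\infty}\leq\|h\|_{j,*}$ and $v_{j,\om}(h)\leq 2C_1\|h\|_{j,*}$, both immediate from the definition of $\|\cdot\|_{j,*}$, produces the bound
$$
\frac{v_{j+n,\om}(\cL_{j,t}^n h)}{2C_1}\leq \bigl(\tfrac{1}{2}+C_1 a^n\bigr)\|h\|_{j,*}.
$$
The threshold $k_0$ was selected exactly so that $2C_1 a^{k_0}\leq 1$, hence $C_1 a^n\leq\tfrac12$ for every $n\geq k_0$, and the coefficient is at most $1$. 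Combining this with the $L^\infty$ estimate yields $\|\cL_{j,t}^n h\|_{j+n,*}\leq\|h\|_{j,*}$, which is the claim; uniformity in $|t|\leq T$ and $j$ is inherited directly from the corresponding uniformity in Lemma \ref{Lasota Yorke}.

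There is no genuine obstacle here beyond the arithmetic above; the entire content of the corollary lies in the clever choice of the norm $\|\cdot\|_{j,*}$ (with the specific scaling factor $2C_1$), not in its verification, and the proof reduces to a one-line rearrangement of the Lasota–Yorke inequality.
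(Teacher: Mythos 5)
Your proof is correct and is exactly the argument the paper intends: the corollary is stated there as an immediate consequence of Lemma \ref{Lasota Yorke} together with $\|\cL_{j,t}^n h\|_{L^\infty}\leq\|h\|_{L^\infty}$, and your computation $\tfrac{1}{2C_1}v_{j+n,\om}(\cL_{j,t}^n h)\leq\bigl(\tfrac12+C_1a^n\bigr)\|h\|_{j,*}\leq\|h\|_{j,*}$ for $n\geq k_0$ just spells out the bookkeeping the paper leaves implicit.
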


Next we need:
\begin{lemma}\label{Lemma 4.6}
Let Assumptions \ref{Assss} and \ref{Ball Ass} be in force.
For every $\varepsilon\in(0,1)$ there exists $k_1(\varepsilon)$   such that for every function $H_j:\cX_{j,\infty}\to\bbR$ such that  $\|H\|_{j,*}\leq 1$ and 
 $|\kappa_j(H)|\leq1-\varepsilon$  for all $k\geq k_1(\varepsilon)$ we have $\|\cL_{j}^kH\|_{j+k,*}\leq 1-\varepsilon/2$.
\end{lemma}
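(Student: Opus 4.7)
The plan is to deduce this directly from the sequential Perron–Frobenius theorem (Theorem \ref{RPF}), which says that $\cL_j^k$ collapses any function in the $\|\cdot\|_{j,\om}$ norm toward the scalar $\ka_j(H)$ at an exponential rate uniform in $j$. The hypothesis $|\ka_j(H)|\leq 1-\varepsilon$ then gives a definite gap below $1$ for the $L^\infty$ part, while the variation part decays to $0$, which is all that is needed for the max defining $\|\cdot\|_{j,*}$.

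First I would convert the $\|\cdot\|_{j,*}$ bound on $H$ into a $\|\cdot\|_{j,\om}$ bound: from $\|H\|_{j,*}\leq 1$ we have $\|H\|_{L^\infty}\leq 1$ and $v_{j,\om}(H)\leq 2C_1$, so
\[
\|H\|_{j,\om}=\|H\|_{L^\infty}+v_{j,\om}(H)\leq 1+2C_1.
\]
Theorem \ref{RPF} then yields constants $C>0$ and $\gamma\in(0,1)$, independent of $j$ and $H$, such that
\[
\|\cL_j^k H-\ka_j(H)\|_{j+k,\om}\leq C(1+2C_1)\gamma^k.
\]

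Next I would split the estimate on $\|\cL_j^k H\|_{j+k,*}=\max(\|\cL_j^k H\|_{L^\infty},\,v_{j+k,\om}(\cL_j^k H)/(2C_1))$ into the two pieces of the max. For the sup-norm piece, the triangle inequality gives
\[
\|\cL_j^k H\|_{L^\infty}\leq |\ka_j(H)|+\|\cL_j^k H-\ka_j(H)\|_{L^\infty}\leq (1-\varepsilon)+C(1+2C_1)\gamma^k.
\]
For the variation piece, since $\ka_j(H)$ is a constant and contributes nothing to $v_{j+k,\om}$,
\[
v_{j+k,\om}(\cL_j^k H)=v_{j+k,\om}(\cL_j^k H-\ka_j(H))\leq C(1+2C_1)\gamma^k.
\]
Now I would simply choose $k_1(\varepsilon)$ so large that $C(1+2C_1)\gamma^k\leq \varepsilon/2$ and also $C(1+2C_1)\gamma^k/(2C_1)\leq 1-\varepsilon/2$ for all $k\geq k_1(\varepsilon)$; both conditions hold for all sufficiently large $k$ since the right-hand side in each is positive and the left-hand side tends to $0$. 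Combining the two estimates gives $\|\cL_j^k H\|_{j+k,*}\leq 1-\varepsilon/2$.

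I do not expect any real obstacle here; the only point requiring care is that the constants $C,\gamma$ from Theorem \ref{RPF} and $C_1$ from Lemma \ref{Lasota Yorke} are uniform in $j$ (which is built into those statements), so the threshold $k_1(\varepsilon)$ can be chosen independently of $j$ and of the particular $H$.
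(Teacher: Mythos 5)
Your proof is correct and follows essentially the same route as the paper: apply Theorem \ref{RPF} to control $\|\cL_j^kH-\ka_j(H)\|_{j+k,\om}$, use $|\ka_j(H)|\leq 1-\varepsilon$ for the sup part and the decaying remainder for the variation part, then pick $k_1(\varepsilon)$ uniformly in $j$. If anything, you are slightly more careful than the paper in converting the hypothesis $\|H\|_{j,*}\leq 1$ into the bound $\|H\|_{j,\om}\leq 1+2C_1$ before invoking Theorem \ref{RPF}.
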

\begin{proof}
Let $C>1$ and $\zeta\in(0,1)$ be like in Theorem \ref{RPF}.
By Theorem \ref{RPF} we have
$$
\|\cL_j^kh-\kappa_j(h)\|_{j+k,\om}\leq C\|h\|_{j,\om}\zeta^k\leq \zeta^{k/2}
$$
if $C\zeta^{k/2}\leq1$.
Thus 
$$
\|\cL_j^kh\|_{j+k,*}\leq 1-\varepsilon+ \zeta^{k/2}<1-\varepsilon/2
$$
if $\zeta^{k/2}\leq \varepsilon/2$.
\end{proof}
We will also need the following result.
\begin{lemma}\label{Le2}
Let Assumption \ref{Ball Ass} be in force and suppose that $\|h\|_*\leq1$. The for every $\beta\in(0,1)$ small enough there exists a constant $u(\beta)>0$ which does not depend on $h$ such that $\inf|h|<1-\beta$ implies that  $\bbE[|h|]<1-u(\beta)$. Moreover, we can ensure that $u(\beta)$ is decreasing in $\beta$ and $\lim_{\beta\to 0}u(\beta)=0$.
\end{lemma}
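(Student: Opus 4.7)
The plan is to leverage the Lipschitz-type regularity encoded in $\|\cdot\|_{j,*}$ to convert the pointwise defect $\inf|h|<1-\beta$ into a defect on a small ball around a near-minimizer, and then integrate using the uniform lower mass bound for such balls provided by Proposition \ref{Ball ass2}(i). The hypothesis $\|h\|_{j,*}\leq 1$ unpacks as $\|h\|_{L^\infty}\leq 1$ and $v_{j,\om}(h)\leq 2C_1$, so
$$
\bigl||h(x)|-|h(y)|\bigr|\leq |h(x)-h(y)|\leq 2C_1\,\om_j(x,y).
$$
Pick $x_0\in\cX_{j,\infty}$ with $|h(x_0)|<1-\beta$; then for every $x\in\tilde B_j(x_0,r)$ we have $|h(x)|<1-\beta+2C_1 r$. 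Choosing $r=r(\beta):=\beta/(4C_1)$ forces $|h|\leq 1-\beta/2$ throughout $\tilde B_j(x_0,r(\beta))$.

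For $\beta$ small enough, $r(\beta)$ lies in the regime where Proposition \ref{Ball ass2}(i) applies and supplies $\kappa_j(\tilde B_j(x_0,r(\beta)))\geq \eta(r(\beta))>0$ uniformly in $j$ and $x_0$. Splitting the integral against $\ka_j$ and using $|h|\leq 1$ outside the ball,
$$
\bbE[|h|]\leq \eta(r(\beta))\bigl(1-\tfrac{\beta}{2}\bigr)+\bigl(1-\eta(r(\beta))\bigr)=1-\eta(r(\beta))\tfrac{\beta}{2}.
$$
Setting $u(\beta):=\eta(r(\beta))\,\beta/4$ (half of the gap above, inserted so the resulting inequality is strict) thus yields a positive constant, uniform in $h$ and in $j$, for which $\bbE[|h|]<1-u(\beta)$, and $u(\beta)\to 0$ as $\beta\to 0$ since $\eta\leq 1$.

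For the monotonicity clause, $\eta$ is nondecreasing in $r$ as an infimum of measures of nested balls, hence $u(\beta)=\eta(r(\beta))\beta/4$ is already monotone in $\beta$; if a particular direction of monotonicity is needed on the nose, one replaces $u$ by an appropriate monotone envelope, which preserves positivity and the vanishing at $0$. The only genuinely non-trivial input in the entire argument is the uniformity in $j$ and in the ball center of the mass lower bound for $\tilde B_j(x_0,r)$, and this is exactly what Assumption \ref{Ball Ass} delivers through Proposition \ref{Ball ass2}(i); everything else is direct Lipschitz bookkeeping, so no real obstacle arises.
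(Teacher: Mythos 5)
Your argument is correct and is essentially the paper's own proof: locate a near-minimizer, use $v_{j,\om}(h)\leq 2C_1$ to force $|h|\leq 1-\beta/2$ on a ball $\tilde B_j(x_0,r(\beta))$, and split the integral using the uniform mass bound $\eta(r)$ from Proposition \ref{Ball ass2}(i), yielding $\bbE[|h|]\leq 1-\eta(r(\beta))\beta/2$. Your explicit choice $r(\beta)=\beta/(4C_1)$ and the remark on the monotonicity of $u$ only make explicit what the paper leaves implicit, so there is nothing to correct.
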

\begin{proof}
Suppose that
$$
|h(x)|<1-\beta
$$
for some $x$ and $\beta>0$. Then for every $r$,
$$
\mu_j(|h|)\leq 1-\mu_j(\tilde B_j(x,r))+\mu_j(\tilde B_j(x,r))\sup\{|h(y)|: y\in B_j(x,r)\}.
$$
Taking $r$ small enough we get
that the above supremum does not exceed $1-\beta/2$. Therefore, by Proposition \ref{Ball ass2}
$$
\mu_j(|h|)\leq 1-\beta\eta(r)/2
$$
where $\eta(r)$ is the lower bound from  Proposition \ref{Ball ass2}.
\end{proof}

The following consequence of the above results will be crucial in our proofs.
\begin{corollary}\label{Corollary 4.9}
Under Assumption \ref{Ball Ass} we have the following.
Let $\varepsilon\in(0,1/2)$. Then the exist $k_2=k_2(\ve)$ and $\te(\varepsilon)>0$
with the following property.
If $\|\cL_{j,t}^{k_2+m}\|_{j+k_2+m,*}> 1-\varepsilon/2$  for some $m\geq k_0$ and $t\in[-T,T]$ then there exists $h$ with $\|h\|_{j,*}\leq 1$ and $\inf_x|\cL_{j,t}h(x)|\geq 1-u^{-1}(\varepsilon)$. Therefore, by reversing the roles of $\varepsilon$ and $u^{-1}(\varepsilon)$ there exists $\te(\varepsilon)>0$ such that $\|\cL_{j,t}^{k_2+m}\|_{j+k_2+m,*}> 1-\te(\varepsilon)$ implies $\inf_x|\cL_{j,t}h(x)|\geq 1-\varepsilon$ for some  $h$ with $\|h\|_{j,*}\leq 1$.
\end{corollary}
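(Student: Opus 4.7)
The plan is to extract a near-eigenfunction of $\cL_{\cdot,t}$ from the hypothesis by running an almost-maximizer of $\cL_{j,t}^{k_2+m}$ forward $m$ steps, using Lemma~\ref{Lasota Yorke} to force the variation part of the $\|\cdot\|_*$-norm to become subdominant, and then transferring near-saturation of the \emph{complex} operator to the real one via the pointwise inequality $|\cL_{j,t}^ng|\leq\cL_j^n|g|$ together with the equidistribution statement of Theorem~\ref{RPF}. This reduces matters to a lower bound on $\kappa(|g|)$, which Lemma~\ref{Le2} then upgrades to the desired pointwise lower bound on $|g|$.

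Concretely I would pick $k_2=k_2(\varepsilon)\geq k_0$ large enough that both $C_1 a^{k_2}<(1-\varepsilon)/2$ and $C(1+2C_1)\gamma^{k_2}<\varepsilon/2$, where $C_1$ is from Lemma~\ref{Lasota Yorke} and $C,\gamma$ are from Theorem~\ref{RPF}. The first inequality ensures via Lemma~\ref{Lasota Yorke} that every $\tilde g$ with $\|\tilde g\|_{*}\leq 1$ satisfies $v_{\om}(\cL_{\cdot,t}^{k_2}\tilde g)/(2C_1)\leq 1/2+C_1 a^{k_2}<1-\varepsilon/2$, so its variation contribution to the $*$-norm sits strictly below the threshold $1-\varepsilon/2$. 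Given a witness $h_0$ with $\|h_0\|_{j,*}\leq 1$ and $\|\cL_{j,t}^{k_2+m}h_0\|_{*}>1-\varepsilon/2$, set $g=\cL_{j,t}^{m}h_0$; since $m\geq k_0$, Corollary~\ref{Corollary 4.8} gives $\|g\|_{j+m,*}\leq 1$, and as $\cL_{j+m,t}^{k_2}g=\cL_{j,t}^{k_2+m}h_0$ the variation estimate just mentioned forces the sup-norm part to carry the large value, giving $\|\cL_{j+m,t}^{k_2}g\|_{L^\infty}>1-\varepsilon/2$.

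The pointwise bound $|\cL_{j+m,t}^{k_2}g|\leq\cL_{j+m}^{k_2}|g|$ (valid since $|e^{itS_{j+m,k_2}f}|\equiv 1$ and the real transfer operator is positive) then yields $\|\cL_{j+m}^{k_2}|g|\|_{L^\infty}>1-\varepsilon/2$, and applying Theorem~\ref{RPF} to the nonnegative function $|g|$ (whose $\om$-norm is at most $1+2C_1$) gives
$$
\|\cL_{j+m}^{k_2}|g|-\kappa_{j+m}(|g|)\|_{L^\infty}\leq C(1+2C_1)\gamma^{k_2}<\varepsilon/2,
$$
hence $\kappa_{j+m}(|g|)>1-\varepsilon$. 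Feeding this into the contrapositive of Lemma~\ref{Le2} with $\beta=u^{-1}(\varepsilon)$ (so that $u(\beta)=\varepsilon$) produces $\inf_x|g(x)|\geq 1-u^{-1}(\varepsilon)$. To match the form in the statement I would write $g=\cL_{j+m-1,t}h$ with $h=\cL_{j,t}^{m-1}h_0$ (noting $\|h\|_{j+m-1,*}\leq 1$ by Corollary~\ref{Corollary 4.8}, after absorbing a possible off-by-one into $k_2$) and then relabel the base index from $j+m-1$ to $j$. The ``reversing roles'' clause is then an immediate reapplication of the first assertion with $\varepsilon$ replaced by $u(\varepsilon)$ and $\te(\varepsilon):=u(\varepsilon)/2$.

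The main obstacle I foresee is the norm bookkeeping in the first step: one must be careful that, for the chosen $k_2$, the inequality $\|\cL_{j+m,t}^{k_2}g\|_*>1-\varepsilon/2$ is actually realised by $\|\cdot\|_{L^\infty}$ rather than by the variation seminorm, because only the $L^\infty$ part is transferred by the positivity bound $|\cL_{j,t}^ng|\leq\cL_j^n|g|$. This is the sole point at which the precise weighting in the definition $\|g\|_*=\max(\|g\|_{L^\infty},v_{j,\om}(g)/(2C_1))$ is exploited. Once this decomposition is nailed down, positivity, Theorem~\ref{RPF}, and Lemma~\ref{Le2} combine mechanically to finish.
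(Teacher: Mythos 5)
Your proof is correct and follows essentially the same route as the paper: use the Lasota--Yorke bound to force the $L^\infty$ part of the $\ast$-norm to carry the large value after splitting off the first $m\geq k_0$ iterates, pass to $\cL_{j+m}^{k_2}|g|$ by positivity, deduce $\kappa_{j+m}(|g|)>1-\varepsilon$, and conclude with Lemma \ref{Le2}. The only (cosmetic) difference is that you invoke Theorem \ref{RPF} directly where the paper cites Lemma \ref{Lemma 4.6} (itself an immediate consequence of Theorem \ref{RPF}), and you are in fact slightly more explicit than the paper about the constant bookkeeping and the final relabeling.
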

\begin{proof}
Let $k_2^*(\varepsilon)$ be the smallest positive  integer $k$ so that $C_1a^{k}\leq \frac12(1-\varepsilon/2)$. Then by Lemma \ref{Lasota Yorke} for every function $H$ such that $\|H\|_*\leq1$ and all $j\in\bbZ$, $s\geq  k_2^*(\varepsilon)$ and $t\in[-T,T]$ we have 
\begin{equation}\label{Upppp}
 \frac{v_{j+s,\om}(\cL_{j,t}^{s}H)}{2C_1}\leq 1-\varepsilon/2.
\end{equation}
In the course of the proof we will write $\|g\|_{*}$ instead  $\|g\|_{j,*}$ for all the functions $g$ and the indexes $j$ appearing below.
Next,  take $k_2(\varepsilon)\!\!=\!\!\max(k_2^*(\varepsilon), k_1(\varepsilon))$. 
Suppose that $\|\cL_{j,t}^{k_2(\varepsilon)+m}\|_{*}\!>\!1\!\!-\!\! \varepsilon/2$. Then  there is $h$ such that $\|h\|_*\!\leq\!\! 1$ and $\|\cL_{j,t}^{k_2(\varepsilon)+m}h\|_{j+k,*}
\!>\!1\!\!-\!\! \varepsilon/2$. Set $H=\cL_{j,t}^m h$. Then
$$
\|\cL_{j,t}^{k_2(\varepsilon)+m}h\|_{*}=\|\cL_{j+m,t}^{k_2(\varepsilon)}H\|_{*}=\max\left(\|\cL_{j+m,t}^{k_2(\varepsilon)}H\|_{L^\infty}, \frac{v_{j+m,\om}(\cL_{j+m,t}^{k_2(\varepsilon)}H)}{2C_1}\right)>1-\ve/2.
$$
Now, since $\|h\|_*\leq 1$ and $m\geq k_0$, it follows from \eqref{Norm bound} that $\|H\|_*\leq 1$. Thus, since $k_2(\varepsilon)\geq k_2^*(\varepsilon)$ we conclude from \eqref{Upppp} that 
$$
 \frac{v_{j+m,\om}(\cL_{j+m,t}^{k_2(\varepsilon)}H)}{2C_1}\leq 1-\varepsilon/2.
$$
Hence 
$$
\|\cL_{j+m,t}^{k_2(\varepsilon)}H\|_{L^\infty}>1-\ve/2,
$$
and so
$$
\|\cL_{j+m}^{k_2(\varepsilon)}|H|\|_{L^\infty}>1-\ve/2.
$$
Thus, since $k_2(\varepsilon)\geq k_1(\varepsilon)$ by (the contrapositive of)  
Lemma \ref{Lemma 4.6} we have
$$
\bbE[|H|]>1-\varepsilon
$$
and so by Lemma \ref{Le2} we conclude that 
$$
\inf |H|>1-u^{-1}(\varepsilon).
$$
\end{proof}

\section{Integral of characteristic function and LLT}
\label{SSInt-LLT}
Arguing
like in \cite[\S 2.2]{HK}, in order 
to prove a non-lattice LLT starting with a measure of the form $\ka_0=q_0d\mu_0$ with $\|q_0\|_\al<\infty$ it suffices to prove the following:
\vskip0.2cm
(i) there are constants $\delta,c_2,C_2>0$ such that   for all $|t|\leq \delta$  and all $n$
\begin{equation}\label{Small t}
\left\|\mathcal{L}_{0,t}^n\right\|_*\leq C_2 e^{-c_2\sig_n^2 t^2}.
\end{equation}
\vskip0.2cm

(ii) for each $T>\delta$  we have
\begin{equation}\label{Suff}
\int_{\del\leq |t|\leq T}\|\cL_{0, t}^n\|_{*}dt=o(\sig_n^{-1}).   
\end{equation}


\noindent
Similarly, in order 
to prove a lattice LLT for integer valued observables $f_j$ it suffices to  prove (i) and 
 \begin{equation}\label{SuffLat}
\mathrm{(ii)'} \quad \quad \int_{\del\leq t\leq 2\pi-\del}\|\cL_{0, t}^n\|_{*}dt=o(\sig_n^{-1}).   
\end{equation}


Henceforth we will assume that  Assumptions \ref{MixAss}, \ref{Assss} and \ref{Ball Ass} are in force without stating that directly.
We begin with \eqref{Small t}. The following result can be proven exactly like in \cite[Proposition 10]{DolgHaf LLT} (which is based on ideas in \cite{HK} and \cite{DolgHaf PTRF 1, DolgHaf PTRF 2}).

\begin{proposition}\label{PrSmVarBlock}
There are positive constants $\del_0,c_1,c_2,C_1,C_2$ such that for every finite sequence of functions    
$(v_j)_{j=n}^{n+m-1}$ with $a:=\max\|v_j\|_{j,\omega}\leq \del_0$ 
  we have the following. 
Set $\cA_{j}(g)=\cL_j(e^{iv_j}g)$ and 
$
\cA_{n}^m=\cA_{n+m-1}\circ \cdots\circ\cA_{n+1}\circ\cA_n.
$
Then
\begin{equation}
\label{AUpperLower}
C_1e^{-c_1\mathrm{Var}(S_{n,m} v)}\leq \|\cA_{n}^{m} \|_{\om} \leq C_2  e^{ -c_2 \mathrm{Var}(S_{n,m} v)}
\end{equation} 
where $S_{n,m}v=\sum_{k=0}^{m-1}v_{n+k}\circ T_n^k$ and $\|\cA_{n}^{m}\|_{\om}$ is the operator norm of $A_n^m$ with respect to the norm $\|\cdot\|_{n,\om}$ on the domain and $\|\cdot\|_{n+m,\om}$ on the range. 
\end{proposition}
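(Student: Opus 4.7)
The plan is to follow the strategy indicated by the citation to \cite[Proposition~10]{DolgHaf LLT}, namely to reduce both bounds to estimates on a product of leading eigenvalues, and then relate that product to $\mathrm{Var}(S_{n,m}v)$ by a spectral expansion. First I would upgrade Corollary \ref{Cor1} to the relevant family: since $\|v_j\|_{j,\om}\le\del_0$ with $\del_0$ small, the operators $\cA_j$ are small analytic perturbations of $\cL_j$, so exactly the argument used to prove Corollary \ref{Cor1} (Theorem \ref{RPF} combined with \cite[Theorem D.2]{DolgHaf PTRF 2}) produces triples $(\la_j^v,h_j^v,\ka_j^v)$ with $\cA_j h_j^v=\la_j^v h_{j+1}^v$, $\cA_j^*\ka_{j+1}^v=\la_j^v\ka_j^v$, $\ka_j^v(\mathbf 1)=\ka_j^v(h_j^v)=1$, all depending analytically on $v_j$ with uniformly bounded $\|\cdot\|_{j,*}$ norms, and a uniform spectral gap
$$
\bigl\|\cA_n^m g-\Lambda_{n,m}\ka_n^v(g)h_{n+m}^v\bigr\|_{j+n,\om}\le C\|g\|_{j,\om}\del_1^m,\qquad \Lambda_{n,m}:=\prod_{k=n}^{n+m-1}\la_k^v.
$$

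Next I would expand the eigenvalue. Differentiating the eigenvalue equation at $v_j\equiv 0$ (where $\la_j^0=1$, $h_j^0=\mathbf 1$, $\ka_j^0=\ka_j$) gives
$$
\log\la_j^v = i\ka_j(v_j)-\tfrac12 Q_j(v_j)+O(\|v_j\|_{j,\om}^{3}),
$$
where $Q_j(\cdot)$ is a nonnegative quadratic form encoding the local asymptotic variance contribution of $v_j$. The key identity, obtained from the telescoping sum and Theorem \ref{Var them} (martingale--coboundary splitting applied to $(v_j)$), is
$$
\sum_{k=n}^{n+m-1}Q_k(v_k)=\mathrm{Var}(S_{n,m}v)+O(1),
$$
the $O(1)$ being a boundary term from the coboundary part of the decomposition. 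Combined with the cubic remainder (bounded by $\del_0\,\mathrm{Var}(S_{n,m}v)$ once $\del_0$ is small) this yields
$$
c_1'\,\mathrm{Var}(S_{n,m}v)-C'\le -2\log|\Lambda_{n,m}|\le c_2'\,\mathrm{Var}(S_{n,m}v)+C'.
$$

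With these ingredients the two bounds are immediate. For the upper bound, insert $g$ of unit $\|\cdot\|_{j,*}$ norm into the spectral decomposition, use $|\ka_n^v(g)|\lesssim 1$ and $\|h_{n+m}^v\|_{j+n,\om}\lesssim 1$ to get $\|\cA_n^m\|_{j+n,\om}\le |\Lambda_{n,m}|\cdot O(1)+C\del_1^m$; if $\mathrm{Var}(S_{n,m}v)$ is bounded the estimate is trivial, otherwise the remainder is absorbed. For the lower bound, apply $\cA_n^m$ to $h_n^v$: $\cA_n^m h_n^v=\Lambda_{n,m}h_{n+m}^v$, and since $\|h_n^v\|_{j,*}$ is bounded above and $\|h_{n+m}^v\|_{j+n,\om}$ is bounded below uniformly in $v$, we obtain $\|\cA_n^m\|_{j+n,\om}\ge c|\Lambda_{n,m}|$.

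The main obstacle is Step 2--3: justifying the eigenvalue expansion and identifying $\sum Q_k(v_k)$ with $\mathrm{Var}(S_{n,m}v)$ up to $O(1)$. In the stationary case this is classical, but here it is non-stationary and involves functions of the full future, so the argument requires the reverse-martingale coboundary decomposition of Theorem \ref{Var them} together with uniform control on the derivatives of $\la_j^v$, $h_j^v$, $\ka_j^v$ in the norms $\|\cdot\|_{j,\om}$. Once this is in place, essentially the same bookkeeping as in \cite{DolgHaf LLT, DolgHaf PTRF 1, DolgHaf PTRF 2} closes the argument.
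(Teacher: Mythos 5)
Your overall architecture (sequential perturbative RPF theory for the operators $\cA_j$, reduction of both bounds to the product of leading eigenvalues $\Lambda_{n,m}=\prod_k\la_k^v$, and comparison of $-\log|\Lambda_{n,m}|$ with $\mathrm{Var}(S_{n,m}v)$) is indeed the route behind the proof the paper cites. But the step you yourself flag as the main obstacle is where the argument genuinely breaks. You expand each eigenvalue separately, $\log\la_j^v=i\ka_j(v_j)-\tfrac12 Q_j(v_j)+O(\|v_j\|_{j,\om}^3)$, and assert that the summed cubic remainder is bounded by $\del_0\,\mathrm{Var}(S_{n,m}v)$. A per-term bound only yields a total error of order $m\,\del_0^3$, and there is no inequality of the form $\sum_k\|v_k\|_{j,\om}^2\le C(1+\mathrm{Var}(S_{n,m}v))$: when the $v_j$ are (near‑)coboundaries, $\mathrm{Var}(S_{n,m}v)$ stays bounded while $m\to\infty$. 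In exactly that regime your expansion only gives $\|\cA_n^m\|\ge c\,e^{-Cm\del_0^3}$, which decays exponentially in $m$ and is strictly weaker than the lower bound in \eqref{AUpperLower}; and that regime is precisely where the proposition is used in this paper (Corollary \ref{H corr} and Proposition \ref{case 3} apply it to sequences of bounded variance to conclude that the operator norms stay bounded away from $0$ as $n\to\infty$). A related imprecision: the identity $\sum_k Q_k(v_k)=\mathrm{Var}(S_{n,m}v)+O(1)$ is not a consequence of Theorem \ref{Var them}, which is an equivalence concerning bounded variance; it is obtained in the cited works by differentiating the relation between $\bbE[e^{zS_{n,m}v}]$ and $\Lambda_{n,m}(z)$ (or via Gordin-type decompositions built from Theorem \ref{RPF} and cumulant computations), with some care because the spectral error $O(\del_1^m)$ is additive, not relative to $|\Lambda_{n,m}(z)|$.

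What is missing — and what constitutes the actual substance of the cited Proposition 10 and its antecedents — is an estimate that exploits cancellations across $k$ rather than per-term bounds: for instance, a bound $\bigl|\tfrac{d^r}{dz^r}\log\Lambda_{n,m}(z)\bigr|\le C_r\bigl(1+\mathrm{Var}(S_{n,m}v)\bigr)$ for $r\ge 2$ uniformly on the disc (equivalently, that the tilted variance is comparable to the untilted one, or cumulant bounds of the type $|\Gamma_r(S_{n,m}v)|\le C_r(1+\mathrm{Var}(S_{n,m}v))$), or a decomposition into blocks of bounded variance increment on which a second-order expansion with controlled error is legitimate. Without such an ingredient the passage from the eigenvalue expansion to the two-sided bound \eqref{AUpperLower} does not close. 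Two smaller points: absorbing the spectral error $C\del_1^m$ into $C_2e^{-c_2\mathrm{Var}(S_{n,m}v)}$ in the upper bound requires the linear bound $\mathrm{Var}(S_{n,m}v)\le Cm\del_0^2$, which should be stated (it follows from the exponential decay of correlations of Theorem \ref{RPF}); and the individual forms $Q_j$ need not be nonnegative — only the summed quantity is tied to a variance, and only up to $O(1)$.
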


\begin{corollary}
There exists $\del>0$ such that \eqref{Small t} holds for all $t\in[-\del,\del]$ and all $n$.    
\end{corollary}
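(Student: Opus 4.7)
The plan is to apply Proposition \ref{PrSmVarBlock} to the specific sequence $v_j = t f_j$ for $|t|$ small, with the starting index $n = 0$ and length $m = n$. First, let $M := \sup_j \|f_j\|_{j,\omega}$, which is finite by the standing hypothesis on $(f_j)$, and set $\delta := \delta_0 / M$, where $\delta_0$ is the constant from Proposition \ref{PrSmVarBlock}. Then for any $|t| \leq \delta$ one has $\max_j \|v_j\|_{j,\omega} = |t| M \leq \delta_0$, so the hypothesis of Proposition \ref{PrSmVarBlock} is met.

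With this choice of $v_j$, the operator $\cA_j(g) = \cL_j(e^{i v_j} g) = \cL_j(e^{it f_j} g)$ is exactly $\cL_{j,t}(g)$, and hence $\cA_0^n = \cL_{0,t}^n$. Moreover, $S_{0,n} v = t\, S_n f$, so $\operatorname{Var}(S_{0,n} v) = t^2 \sigma_n^2$. The upper estimate in \eqref{AUpperLower} thus yields
$$
\|\cL_{0,t}^n\|_{n,\om} \leq C_2\, e^{-c_2 t^2 \sig_n^2}
$$
for all $|t| \leq \delta$ and all $n$.

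Finally, the norm $\|\cdot\|_{*}$ and the norm $\|\cdot\|_{\om}$ are equivalent on any $B_j$: by definition $\|g\|_{j,*} \leq \|g\|_{j,\om}$ and $\|g\|_{j,\om} \leq (1 + 2C_1)\|g\|_{j,*}$, so the corresponding operator norms differ only by the constant factor $(1+2C_1)$. Absorbing this factor into $C_2$ gives \eqref{Small t}.

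The main (and only) nontrivial input has already been done in Proposition \ref{PrSmVarBlock}; the present corollary is just the specialization to $v_j = tf_j$ plus the routine comparison of the two norms.
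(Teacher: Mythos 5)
Your proposal is correct and is essentially the paper's own argument: apply Proposition \ref{PrSmVarBlock} with $v_j=tf_j$ and $|t|\leq\del_0/\sup_j\|f_j\|_{j,\om}$, then use the upper bound in \eqref{AUpperLower}. The extra remark on passing between $\|\cdot\|_{j,\om}$ and the equivalent norm $\|\cdot\|_{j,*}$ is a harmless detail the paper leaves implicit.
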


\begin{proof}
We apply Proposition \ref{PrSmVarBlock}    with functions of the form $v_j\!\!=\!\!tf_j$. Let $\|f\|\!\!=\!\!\sup_j\|f_j\|_{j,\om}$. Now \eqref{Small t} follows from the upper bound in Proposition \ref{PrSmVarBlock} if $|t|\|f\|\leq \del_0$.  
\end{proof}

\subsection{Corange}
 Here we prove Theorem \ref{Reduce thm}. To simplify the proof we assume that $\mu_k(f_k)=0$ for all $k$.

Recall the definition of the set $\mathsf{H}$ in Theorem \ref{Reduce thm}. Let fix some $r\in(0,1)$.
Taking into account Remark \ref{Rem Red} 
and Lemma \ref{al be},  
$\mathsf{H}$ is the set of all real numbers $t$ such that for all $n$ we have 
\begin{equation}
\label{TimesTIntRed}
tf_n=h_n-h_{n+1}\circ T_n+g_n+Z_n
\end{equation}
where $h_n,g_n$ and $Z_n$ are functions such that $\sup_j\|h_j\|_{j,\om}\!\!<\!\!\infty$, 
$\mu_n(g_n)\!\!=\!\!0$, $ \sup_n\text{Var}(S_n g)\!\!<\!\!\infty$, 
$\|g_n\|_{\om^{r}}  \to 0
$
and $Z_n$ is integer valued.
 It is clear that $\mathsf{H}$ is a subgroup of $\bbR$.

 Theorem \ref{Reduce thm} follows from the following corollary of Proposition \ref{PrSmVarBlock}, whose proof, relying on Lemma \ref{al be}, proceeds exactly like in \cite[Corollary 4.13]{DolgHaf LLT}, but is given here for the sake of completeness.

\begin{corollary}\label{H corr}
\,

(i) If $\|S_nf\|_{L^2}\not\to \infty$ then $\mathsf{H}=\bbR$.
\vskip0.2cm

(ii) If $(f_j)$ is irreducible then $\mathsf{H}=\{0\}$.
\vskip0.2cm

(iii) If $(f_j)$ is reducible and $\|S_nf\|_{L^2}\to \infty$    then 
$
\mathsf{H}=t_0\bbZ
$
for some $t_0>0$. 

As a consequence, the number $h_0=1/t_0$ is the largest positive number such that $(f_j)$ is reducible to an $h_0\bbZ$-valued sequence. 
\end{corollary}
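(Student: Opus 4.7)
The plan is to treat the three parts separately using the characterization of $\mathsf{H}$ via (\ref{TimesTIntRed}), which immediately exhibits $\mathsf{H}$ as an additive subgroup of $\bbR$ (sums and negations of valid decompositions combine coordinate-wise). For part (i), Theorem \ref{Var them} upgrades bounded variance to the global martingale-coboundary decomposition $f_j = M_j + u_{j+1}\circ T - u_j$ with $\sum_j \text{Var}(M_j) < \infty$; scaling by an arbitrary $t\in\bbR$ furnishes (\ref{TimesTIntRed}) with $h_n = tu_n$, $g_n = tM_n$, $Z_n = 0$, and the only nontrivial verification $\|g_n\|_{\omega^r}\to 0$ follows from Lemma \ref{al be} since $\sum \text{Var}(M_j) < \infty$ forces $\|M_n\|_{L^1}\to 0$ while $\sup_n\|M_n\|_\omega < \infty$. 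Hence $\mathsf{H} = \bbR$. Part (ii) is immediate from the characterization: any nonzero $t \in \mathsf{H}$ supplies, after dividing through by $t$, a reducibility representation (\ref{EqReduction}) for $f_j$ with $h=1/t$ (the coboundary telescopes and $S_n g$ tightness yields $S_n H$ tightness via Remark \ref{Rem Red}), contradicting irreducibility.

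For part (iii), $\mathsf{H}$ is a nontrivial additive subgroup of $\bbR$, hence either $t_0\bbZ$ or dense; I would rule out density by contradiction. Assuming density, select $t_n \in \mathsf{H}$ with $t_n \downarrow 0$. Applying the upper bound in Proposition \ref{PrSmVarBlock} directly to $v_j = 2\pi t_n f_j$ (whose $\omega$-norm is below $\delta_0$ for $n$ large) gives
$$
\|\mathcal{L}_{0, 2\pi t_n}^m\|_{m,\omega} \leq C_2 e^{-c_2 \cdot 4\pi^2 t_n^2 \sigma_m^2},
$$
which tends to $0$ as $m\to\infty$, since $\sigma_m \to \infty$. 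For a matching lower bound I use that integer-valuedness of $S_m Z^{(n)}$ yields $e^{2\pi i t_n S_m f} = e^{2\pi i h_0^{(n)}}\, e^{-2\pi i h_m^{(n)}\circ T^m}\, e^{2\pi i S_m g^{(n)}}$; pulling the $X_m$-measurable factor out of the conditional expectation gives the factorization
$$
\mathcal{L}_{0, 2\pi t_n}^m h = e^{-2\pi i h_m^{(n)}} \cdot \mathcal{B}_0^{(n), m}\bigl(e^{2\pi i h_0^{(n)}} h\bigr),
$$
where $\mathcal{B}_j^{(n)} u = \mathcal{L}_j(e^{2\pi i g_j^{(n)}} u)$. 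The unimodular multipliers are invertible with bounded operator norm in $\|\cdot\|_{j,\omega}$, so up to a constant $\|\mathcal{L}_{0, 2\pi t_n}^m\|_{m,\omega}$ agrees with $\|\mathcal{B}_0^{(n), m}\|_{m,\omega}$. Applying the lower bound in Proposition \ref{PrSmVarBlock} now to $w_j = 2\pi g_j^{(n)}$ yields $\|\mathcal{B}_0^{(n),m}\|_{m,\omega} \geq C_1 e^{-c_1 \cdot 4\pi^2 \sup_m \text{Var}(S_m g^{(n)})}$, bounded below uniformly in $m$ by the tightness condition. The two estimates are incompatible, forcing $\mathsf{H} = t_0\bbZ$. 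The final assertion that $h_0 = 1/t_0$ is the largest reducibility scale follows because $t\mapsto 1/t$ order-reverses the bijection between $\mathsf{H}\setminus\{0\}$ and $R$.

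The main obstacle is the hypothesis $\|w_j\|_\omega = 2\pi \|g_j^{(n)}\|_\omega \leq \delta_0$ required to invoke the lower bound in Proposition \ref{PrSmVarBlock}. Since the decomposition (\ref{TimesTIntRed}) is not unique and nothing in its definition bounds $\|g_j^{(n)}\|_\omega$ uniformly in $n$, I plan to canonicalize the representative: invoking Theorem \ref{Var them} on the zero-mean variance-tight sequence $(g_j^{(n)})$ itself to extract a Gordin-type coboundary part $\tilde u_{j+1}^{(n)}\circ T - \tilde u_j^{(n)}$, one absorbs this into $h_j^{(n)}$ and keeps only the martingale residual $\tilde M_j^{(n)}$. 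Smallness of the residual should then be forced by smallness of $\|t_n f_j\|_\omega$ through a careful bookkeeping argument, mirroring the normalization step in \cite[Corollary 4.13]{DolgHaf LLT}.
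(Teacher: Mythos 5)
Parts (i) and (ii) of your proposal are fine and coincide with the paper's proof. For (iii) your mechanism is also essentially the paper's: the upper bound from Proposition \ref{PrSmVarBlock} at a small frequency (decay like $e^{-ct^2\mathrm{Var}(S_mf)}\to 0$), the lower bound obtained by using $t\in\mathsf{H}$ to factor $e^{2\pi i t S_mf}$ into unimodular multipliers $e^{\pm 2\pi i h_j}$, the integer part, and $e^{2\pi i S_m g}$, and then the bounded variance of $S_mg$; you organize this as ``$\mathsf{H}$ has no small nonzero elements, hence is discrete,'' whereas the paper shows a punctured $\del_1$-neighbourhood of each $t\in\mathsf{H}$ misses $\mathsf{H}$ — logically both suffice once the subgroup property is noted.

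The genuine gap is the one you flag yourself, and your proposed repair does not work. Membership $t\in\mathsf{H}$ only gives $\|g_j\|_{j,\om^{r}}\to 0$ as $j\to\infty$; it gives no bound on $\|g_j\|$ at the initial indices, so the hypothesis $\max_j\|2\pi g_j\|\le\del_0$ of the lower bound in Proposition \ref{PrSmVarBlock} fails for the block starting at $0$. No ``canonicalization'' can force uniform smallness: the components of \eqref{TimesTIntRed} are not controlled by $\|t f_j\|_{j,\om}$ (that is exactly the cohomological freedom in the decomposition), and applying Theorem \ref{Var them} to $(g_j)$ produces a martingale residual which, via Lemma \ref{al be}, is again only \emph{eventually} small in $j$, not uniformly; so smallness of the residual is not ``forced by smallness of $\|t_n f_j\|_\om$''. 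The correct fix is much simpler and is precisely what the paper does: for the fixed small $t\in\mathsf{H}$ choose $j_0=j_0(t)$ so large that $2\pi\|g_j\|_{j,\om^{r}}\le\del_0$ for all $j\ge j_0$, and run \emph{both} bounds on the same block $\cL_{j_0,2\pi t}^m$, working throughout with the $\om^{r}$-scale norms (in which $f_j$ and the multipliers $h_j$ remain uniformly bounded, so the conjugation argument is unaffected). Then $\|\cL_{j_0,2\pi t}^m\|\le C_2 e^{-c_2 4\pi^2 t^2 \mathrm{Var}(S_{j_0,m}f)}\to 0$ since $\mathrm{Var}(S_{j_0,m}f)\to\infty$ (the first $j_0$ terms change the variance by $O(1)$), while $\|\cL_{j_0,2\pi t}^m\|\ge c' e^{-c_1 4\pi^2\sup_m\mathrm{Var}(S_{j_0,m}g)}>0$, giving the contradiction. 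With this change (and dropping the ``canonicalization'' step) your argument closes and the final identification $h_0=1/t_0$ goes through as you state.
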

\begin{proof}
(i) If the variance does not diverge to $\infty$ then by Theorem \ref{Var them} we see that $(f_j)$ is decomposed as a sum of a center tight sequence and a coboundary. Thus for every real $t$ the function $tf_j$ has decomposition \eqref{TimesTIntRed} (with $Z_j=0)$ with $g_j$ being the martingale part. Since 
$
\sum_j\text{Var}(g_j)<\infty
$
we conclude that $\mathsf{H}=\bbR$.
\vskip0.2cm
(ii) If $t\not=0$ belongs to $\mathsf{H}$ then $f$ must be reducible to an $h\bbZ$-valued sequence, with $h=1/|t|$. 
\vskip0.2cm
(iii) 
We claim first that if $t\in \mathsf{H}$ then for all  $j$ large enough  the norms $\|\cL_{j,t}^n\|_{j,\om^r}$ do not converge to $0$ as $n\to\infty$.  Since $ \sup_{j\geq n}\|g_j\|_{j,\om^r}\to 0$ as $n\to\infty$, for $j$ large enough  we can apply  Proposition \ref{PrSmVarBlock} with $\om^r$ instead of $\om$ and with these functions to conclude 
that 
$$
\|\cA_{j,t}^n\|_{\om^r}\geq C_1e^{-c_2\text{Var}(S_{j,n}g)}
$$
where $\cA_{j,t}^n$ is the operator given by
$$
\cA_{j,t}^n(q)=\cL_j^n(e^{it S_{j,n}g}q)
$$
and $\|\cdot\|_{\om^r}$ is its operator norm with respect to the norm $\|\cdot\|_{j,\om^r}$ on the domain and $\|\cdot\|_{j+n,\om^r}$ on the range.
Since $ \sup_n\text{Var}(S_{j,n}g)<\infty$ we get
$
\inf_{n}\|\cA_{j,t}^n\|_{\om^r}>0.
$
To finish the proof of the claim, note that 
$
\cA_{j,t}^n(q)=e^{ih_{j+n}}\cL_{j,t}^n(qe^{-ih_j})
$
and so
$
 \|\cA_{j,t}^n\|_{\om^r}\leq C\|\cL_{j,t}^n\|_{\om^r}
$
for some constant $C>0$.

Now, in order to complete the proof of the corollary, it is enough to show that there exists $\del_1>0$ such that for every $t\in \mathsf{H}$ and all $s\in[-\del_1,\del_1]\setminus\{0\}$ for all $j$ we have 
\begin{equation}\label{this will}
\lim_{n\to\infty}\|\cL_{j,t+s}^n\|_{\om^r}=0.
\end{equation}
\eqref{this will} shows that $t+s\not\in \mathsf{H}$. It  follows that $\mathsf{H}$ is a non empty discrete subgroup of $\mathbb{R}$,   whence $\mathsf{H}=t_0 \bbZ$
for some $t_0\in \bbR_+$,
completing the proof of the proposition.

In order to prove \eqref{this will}, let $ \|f\|=\sup_j\|f_j\|_{j,\om^r}$. Define $\del_1=\frac{\del_0}{2\|f\|+2}$, where $\del_0$ comes from Proposition \ref{PrSmVarBlock}. Thus, when $j$ is large enough and $|s|\leq \del_1$ we can  apply Proposition \ref{PrSmVarBlock} with the functions $u_j=g_j+sf_j$ to conclude that 
the operator 
$$
\cB_{j,t,s}^n(q)=\cL_j^n(qe^{it S_{j,n}g+isS_{j,n}f})
\quad\text{satisfies}\quad
\|\cB_{j,t,s}^n\|_{\om^r}\leq C_2'e^{-c_2s^2\text{Var}(S_{j,n}f)}
$$
where  we have used that the variance of $S_{j,n}g$ is bounded in $n$. The desired estimate
\eqref{this will} follows since
$
\lim_{n\to\infty}\text{Var}(S_{j,n}f)=\infty
$
and 
$\
\cL_{j,t+s}^n(q)=e^{-ih_{j+n}}\cB_{j,t,s}^n(qe^{ih_j}).
$
  \end{proof}

\section{A reduction lemma}\label{Sec 5}

\subsection{The apriori coboundary functions}

Next, let us take a designated point $\alpha=(\alpha_j)_{j\geq 0}\in\cX_{0,\infty}$. 
\begin{lemma}\label{Aux1}
Define $H_k:\cX_{k,\infty}\to\bbR$ by 
$$
H_k(x)=S_k f(\alpha_0,...,\alpha_{k-1},x)-S_kf(\alpha)=\sum_{m=0}^{k-1}\left(f_m(\alpha_m,...,\alpha_{k-1},x)-f_m(\alpha_m,\alpha_{m+1},...)\right).
$$
Then 
$\sup_k\|H_k\|_{k,\om}<\infty$
\end{lemma}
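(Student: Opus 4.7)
The plan is to bound the two components of $\|H_k\|_{k,\om}=\|H_k\|_{L^\infty(\ka_k)}+v_{k,\om}(H_k)$ separately, each by a geometric series in $a$, using a single key observation about the modulus $\om_m$: if two paths in $\cX_{m,\infty}$ agree on their first $k-m$ coordinates, then their $\om_m$-distance is $a^{k-m}$ times the $\om_k$-distance of their tails.

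More precisely, I would first record the identity
\begin{equation*}
\om_m\bigl((\al_m,\ldots,\al_{k-1},x),(\al_m,\ldots,\al_{k-1},y)\bigr)
=\sup_{n\geq k-m}a^{n}\om(d_{m+n}(x_{m+n},y_{m+n}))
=a^{k-m}\om_k(x,y),
\end{equation*}
which is immediate from the definition $\om_j(x,y)=\sup_{n\geq 0}a^n\om(d_{j+n}(x_{j+n},y_{j+n}))$ after re-indexing $n'=n-(k-m)$. The same computation, but comparing $(\al_m,\ldots,\al_{k-1},x)$ with the constant tail $(\al_m,\al_{m+1},\ldots)$ (which agree on the first $k-m$ coordinates), gives an upper bound $a^{k-m}$, since $\om\leq 1$.

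For the $L^\infty$ estimate, I apply the definition of $v_{m,\om}(f_m)$ to each summand:
\begin{equation*}
|f_m(\al_m,\ldots,\al_{k-1},x)-f_m(\al_m,\al_{m+1},\ldots)|\leq v_{m,\om}(f_m)\,a^{k-m}.
\end{equation*}
Summing over $0\leq m\leq k-1$ and using $\|f\|:=\sup_m\|f_m\|_{m,\om}<\infty$ yields $\|H_k\|_{L^\infty}\leq \|f\|\cdot \frac{a}{1-a}$ uniformly in $k$ and $x$ (in particular, uniformly in $\ka_k$-ess.sup).

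For the $\om_k$-variation, I compute
\begin{equation*}
H_k(x)-H_k(y)=\sum_{m=0}^{k-1}\bigl(f_m(\al_m,\ldots,\al_{k-1},x)-f_m(\al_m,\ldots,\al_{k-1},y)\bigr),
\end{equation*}
and bound each summand by $v_{m,\om}(f_m)\,a^{k-m}\om_k(x,y)$ using the identity above. Summing gives $v_{k,\om}(H_k)\leq \|f\|\cdot\frac{a}{1-a}$, so $\sup_k\|H_k\|_{k,\om}\leq 2\|f\|\cdot\frac{a}{1-a}<\infty$.

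There is no real obstacle here; the only subtlety is to carefully track which coordinates of the two paths agree and then exploit the geometric weights $a^n$ built into $\om_m$ so that the sum over $m$ telescopes into a convergent geometric series. No appeal to Assumptions \ref{Assss} or \ref{Ball Ass} is needed — only $\om\leq 1$, $a\in(0,1)$, and $\sup_m\|f_m\|_{m,\om}<\infty$.
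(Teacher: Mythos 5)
Your proof is correct and follows essentially the same route as the paper: bound each summand $f_m(\al_m,\ldots,\al_{k-1},x)-f_m(\cdot)$ via $v_{m,\om}(f_m)$ together with the geometric factor coming from the fact that the two arguments agree in their first $k-m$ coordinates, then sum the resulting geometric series for both the $L^\infty$ part and the $\om_k$-variation part. Your version is in fact a bit more careful, since you state the identity $\om_m\bigl((\al_m,\ldots,\al_{k-1},x),(\al_m,\ldots,\al_{k-1},y)\bigr)=a^{k-m}\om_k(x,y)$ explicitly (the paper's displayed factor $2^{-(m-k)}$ is a typo for this geometric weight), and you correctly observe that no appeal to Assumptions \ref{Assss} or \ref{Ball Ass} is needed.
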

\begin{proof}
$$
\left|f_m(\alpha_m,...,\alpha_{k-1},x)-f_m(\alpha_m,\alpha_{m+1},...)\right|
$$
$$
\leq v_{m,\om}(f_m)\omega_m((\alpha_m,...,\alpha_{k-1},x),(\alpha_m,\alpha_{m+1},...))\leq a^{(m-k)}\om_{k}(x,(a_k,a_{k+1},...))
$$
and so 
$$
\sup_k\|H_k\|_{L^\infty}<\infty.
$$
Next, 
$$
|H_k(x)-H_k(y)|\leq \sum_{m=0}^{k-1}|f_m(\alpha_m,...,\alpha_{k-1},x)-f_m(\alpha_m,...,\alpha_{k-1},y)|\leq \sum_{m=0}^{k-1}v_{m,\om}(f_m)a^{(m-k)}\om_{k}(x,y)\leq C\om_k(x,y)
$$
for some constant $C>0$.
Thus $\sup_k\|H_k\|_{k,\om}<\infty$.
\end{proof}

Next, let us define $H_{j,\ell}:\cX_{j+\ell,\infty}\to\bbR$ by
$$
H_{j,\ell}(x)=S_{j,\ell}f(\alpha_j,...,\alpha_{j+\ell-1},x)-S_{j,\ell}f(\alpha_j,\alpha_{j+1},...).
$$
We also have the following result.
\begin{lemma}\label{Aux2}
Define $\Delta_{j,\ell}=H_{j,\ell+1}\circ T_{j+\ell}-H_{j,\ell}-(H_{j+\ell+1}\circ T_{j+\ell}-H_{j+\ell})$. Then
\begin{equation}\label{C1}
 \lim_{\ell\to\infty}\sup_{j}\|\Delta_{j,\ell}\|_{j+\ell,\omega}=0.   
\end{equation}
\end{lemma}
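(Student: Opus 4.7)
My plan is to compute $\Delta_{j,\ell}$ directly from the definitions, identify a substantial telescoping cancellation, and then bound what remains using the exponential decay built into $\omega_m$. Writing $H_{j,\ell}(x)=\sum_{m=j}^{j+\ell-1}\bigl[f_m(\alpha_m,\ldots,\alpha_{j+\ell-1},x)-f_m(\alpha_m,\alpha_{m+1},\ldots)\bigr]$, and the analogous formulas for $H_{j,\ell+1}$, $H_{j+\ell}$, $H_{j+\ell+1}$ from Lemma \ref{Aux1}, I would substitute $T_{j+\ell}x=(x_{j+\ell+1},x_{j+\ell+2},\ldots)$ into each term. In $H_{j,\ell+1}\circ T_{j+\ell}(x)-H_{j,\ell}(x)$ the paired summands at indices $m\in[j,j+\ell-1]$ collapse to differences of the form $f_m(\alpha_m,\ldots,\alpha_{j+\ell},x_{j+\ell+1},\ldots)-f_m(\alpha_m,\ldots,\alpha_{j+\ell-1},x_{j+\ell},x_{j+\ell+1},\ldots)$, while the lone $m=j+\ell$ term of $H_{j,\ell+1}\circ T_{j+\ell}$ combined with the constant correction $S_{j,\ell+1}f(\alpha)-S_{j,\ell}f(\alpha)$ produces a remainder $R_\ell(x):=f_{j+\ell}(\alpha_{j+\ell},x_{j+\ell+1},\ldots)-f_{j+\ell}(\alpha_{j+\ell},\alpha_{j+\ell+1},\ldots)$.

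The same calculation applied to $H_{j+\ell+1}\circ T_{j+\ell}(x)-H_{j+\ell}(x)$ produces the identical remainder $R_\ell(x)$ together with the same kind of paired sum, but now over the longer range $m\in[0,j+\ell-1]$. Subtracting, $R_\ell$ cancels and so do all paired summands with $m\in[j,j+\ell-1]$, leaving the ``head''
\[\Delta_{j,\ell}(x)=-\sum_{m=0}^{j-1}\Bigl[f_m(\alpha_m,\ldots,\alpha_{j+\ell},x_{j+\ell+1},\ldots)-f_m(\alpha_m,\ldots,\alpha_{j+\ell-1},x_{j+\ell},x_{j+\ell+1},\ldots)\Bigr].\]

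To control $\|\Delta_{j,\ell}\|_{j+\ell,\omega}$ I would then observe that within each summand the two arguments of $f_m$ differ only at the single coordinate $j+\ell$. By the definition of $\omega_m$ (with its exponential weight $a^n$ at position $m+n$), the $\omega_m$-distance between the two arguments is bounded by $a^{j+\ell-m}\omega(d_{j+\ell}(\alpha_{j+\ell},x_{j+\ell}))\le a^{j+\ell-m}$, so the summand is at most $Ca^{j+\ell-m}$ with $C:=\sup_m v_{m,\omega}(f_m)<\infty$. Summing the geometric series over $m\in[0,j-1]$ gives $\|\Delta_{j,\ell}\|_{L^\infty}\le Ca^{\ell+1}/(1-a)$, uniformly in $j$. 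The $\omega$-variation seminorm is handled in the same way: for the $m$-th summand $g_m$, I would split $|g_m(x)-g_m(y)|$ via the triangle inequality into two $\omega_m$-Lipschitz estimates, both controlled by $v_{m,\omega}(f_m)a^{j+\ell-m}\omega_{j+\ell}(x,y)$, because the coordinates being varied all lie at positions $\ge j+\ell$. Summing yields $v_{j+\ell,\omega}(\Delta_{j,\ell})\le 2Ca^{\ell+1}/(1-a)$, which together with the $L^\infty$ bound gives \eqref{C1}.

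Honestly there is no serious obstacle: the lemma is a bookkeeping identity followed by a geometric-series estimate. The only delicate point is noticing that the $m=j+\ell$ summand from the $H_{j,\ell+1}\circ T_{j+\ell}$ expansion and the constant corrections $S_{\cdot,\cdot}f(\alpha)$ conspire into a common remainder $R_\ell(x)$ which is shared with the expansion of $H_{j+\ell+1}\circ T_{j+\ell}-H_{j+\ell}$ and hence cancels inside $\Delta_{j,\ell}$; once that observation is made, all the decay comes for free from the exponential weight $a^n$ in the definition of $\omega_m$.
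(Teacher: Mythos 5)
Your proof is correct and is in substance the paper's own argument: your ``head'' sum is exactly the quantity $S_{0,j}f\circ\al'_{j+\ell}-S_{0,j}f\circ\al'_{j+\ell+1}\circ T_{j+\ell}$ that the paper isolates via the $R$-functions and the cocycle identity, and your geometric-series bound is the same $O(a^{\ell})$ estimate (in both the sup norm and the $\om$-variation) coming from the fact that the two arguments agree in all coordinates below position $j+\ell$. The only difference is cosmetic bookkeeping (direct expansion and telescoping of the defining sums instead of the paper's $R_{k}$, $R_{j,\ell}$ manipulation), so no further comment is needed.
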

\begin{proof}
Let the functions $R_k$ and $R_{j,\ell}$
be given by
$$
R_{k}(X_k,X_{k+1},...)=S_{0,k}(\alpha_0,\alpha_1,...,\alpha_{k-1},X_k,X_{k+1},....).
$$
and 
$$
 R_{j,\ell}(X_{j+\ell},X_{j+\ell+1},...)=S_{j,\ell}(a_j,...,a_{j+\ell-1},X_{j+\ell},X_{j+\ell+1},...).
$$
Then
$$
H_{k}=R_{k}-S_{0,k}f(\alpha)\,\,\text{ and }\,\,H_{j,\ell}=R_{j,\ell}-S_{j,\ell}(\alpha_j,\alpha_{j+1},...)
$$
Now, denote by $T_j:\cX_{j,\infty}\to\cX_{j+1,\infty}$ the left shift and set $\al'_{j,k}(x)=(\alpha_j,...,\alpha_{j+k-1},x)$. Since
\begin{equation}\label{Cocycle}
S_{j,\ell}f(X_j,X_{j+1},...)=S_{0,j+\ell}f(X_0,X_1,...)-S_{0,j}f(X_0,X_1,...).
\end{equation}
we have
\begin{equation}\label{est1}
R_{j,\ell+1}\circ T_{j+\ell}-R_{j,\ell}=\left(S_{0,j+\ell+1}-S_{0,j}\right)\circ\al'_{j+\ell+1}\circ T_{j+\ell}-\left(S_{0,j+\ell}-S_{0,j}\right)\circ\al'_{j+\ell}
\end{equation}
$$
=R_{j+\ell+1}\circ T_{j+\ell}-R_{j+\ell}+\left(S_{0,j}f\circ\al'_{j+\ell}-S_{0,j}f\circ\al'_{j+\ell+1}\circ T_{j+\ell}\right).
$$
Since the points $\al'_{j+\ell}(x)$ and $\al'_{j+\ell+1}\circ T_{j+\ell}(x)$ have the same $j+\ell$ first coordinates and $\sup_k\|f_k\|_{k,\om}<\infty$ we see that 
$$
\sup_x|S_{0,j}f\circ\al'_{j+\ell}(x)-S_{0,j}f\circ\al'_{j+\ell+1}\circ T_{j+\ell}(x)|\leq Ca^\ell
$$
and for every $x,y$
$$
|S_{0,j}f\circ\al'_{j+\ell}(x)-S_{0,j}f\circ\al'_{j+\ell+1}\circ T_{j+\ell}(y)|\leq Ca^\ell\om_{j+\ell}(x,y).
$$
Therefore,
\begin{equation}\label{est2}
\|S_{0,j}f\circ\al'_{j+\ell}-S_{0,j}f\circ\al'_{j+\ell+1}\circ T_{j+\ell}\|_{j+\ell,\om}=O(a^\ell).
\end{equation}
Now, with $T_k^m=T_{k+m-1}\circ\cdots\circ T_{k+1}\circ T_k$ we have
\begin{equation}\label{H to R }
H_{j,\ell+1}\circ T_{j+\ell}-H_{j,\ell}=R_{j,\ell+1}\circ T_{j+\ell}-R_{j,\ell}+(S_{j,\ell}f\circ T_0^{j}(\alpha)-S_{j,\ell+1}\circ T_0^{j}(\alpha))
\end{equation}
$$
=R_{j,\ell+1}\circ T_{j+\ell}-R_{j,\ell}-f_{j+\ell}(T_0^{j+\ell}(\alpha))
\,\,\,\text{ and }
$$
$$
H_{j+\ell+1}\circ T_{j+\ell}-H_{j+\ell}=R_{j+\ell+1}\circ T_{j+\ell}-R_{j,\ell}+(S_{0,j+\ell}f(\alpha)-S_{0,j+\ell+1}f(\alpha))
$$
$$
=R_{j+\ell+1}\circ T_{j+\ell}-R_{j+\ell}-f_{j+\ell}(T_0^{j+\ell}(\alpha)).
$$  
Hence 
$$
\Delta_{j,\ell}=\left[R_{j,\ell+1}\circ T_{j+\ell}-R_{j,\ell}\right]-\left[R_{j+\ell+1}\circ T_{j+\ell}-R_{j+\ell}\right] 
$$
which together with \eqref{est1} and \eqref{est2} yields \eqref{C1}.
\end{proof}

Here we prove Theorem \ref{LLT2}. As we have explained before, it suffices to prove \eqref{Suff}. Hence Theorem \ref{LLT2} follows from the
estimate below.

\subsection{The reduction lemma}
\begin{lemma}
\label{LmTDSmall} 
For every $0<\delta<T$ and $L\in\bbN$ there exists $0<\delta(L)<1/2$ such that if
\begin{equation}
\label{L-Epsilon}
\varepsilon\leq \delta(L) 
\end{equation}
then  for every $j$ and $\ell\leq L+1$ we have the following.
If  for some nonzero real $t$ such that $\del \leq |t|\leq T$ there exists $h$ with 
$\|h\|_{\om,*}\leq 1$ and
\begin{equation}
\label{NormAlmostOne}
\inf_{x}|\cL_{j,t}^{\ell}h(x)|\geq 1-\varepsilon 
\end{equation}
then
\begin{equation}\label{Deccomp}
f_{j+\ell-1}=g_{j,t,\ell}+H_{j+\ell-1}-H_{j+\ell}\circ T_{j+\ell-1}+(2\pi/t) Z_{t,j,\ell}
\end{equation}
where $Z_{t,j,\ell}$ is integer valued, and for every $0<r<1$ we have 
$\sup_{t,j,\ell}\|g_{t,j,\ell}\|_{j+\ell,\om^r}\leq c_r(\ell)$ where $c_r(\ell)\to 0$ as $\ell\to\infty$.
\end{lemma}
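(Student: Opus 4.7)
My plan is to exploit the near-isometry $\inf_x|\cL_{j,t}^\ell h(x)|\geq 1-\varepsilon$ to extract an approximate phase-cancellation identity modulo $2\pi\bbZ$ along the paths $b$ in the support of $P_{j,\ell}(\cdot|x)$, and then use the ball-mass lower bound in Proposition~\ref{Ball ass2}(ii) to select two comparison paths that differ only in the final coordinate $b_{\ell-1}$, where the argument of $f_{j+\ell-1}$ lives. Writing $\psi(x):=\arg\cL_{j,t}^\ell h(x)$ and
\[
\Theta(b,x):=tS_{j,\ell}f(b,x)+\arg h(b,x)-\psi(x),
\]
the hypothesis together with $|h|\leq 1$ yields the convex estimate
\[
\int\!\bigl[(1-|h(b,x)|)+|h(b,x)|\bigl(1-\cos\Theta(b,x)\bigr)\bigr]\,P_{j,\ell}(db|x)\leq\varepsilon,
\]
so a Chebyshev argument produces a set $E_x\subset\cX_{j,\ell}$ of $P_{j,\ell}(\cdot|x)$-mass at least $1-C\sqrt\varepsilon$ on which simultaneously $|h(b,x)|\geq 1-C\sqrt\varepsilon$ and $\Theta(b,x)\in 2\pi\bbZ+C\varepsilon^{1/4}$.

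I then apply Proposition~\ref{Ball ass2}(ii) around the designated point $\alpha$ to obtain, for each $y\in\cX_{j+\ell-1}$ in a positive-measure set, two paths $b(y),b_*\in E_x$ that are $\om_j$-close to $(\alpha_j,\dots,\alpha_{j+\ell-2},y)$ and to $(\alpha_j,\dots,\alpha_{j+\ell-1})$ respectively. Subtracting the $\Theta$-identities at these paths cancels $\psi(x)$ and collapses the mod-$2\pi\bbZ$ ambiguity to a single integer $Z_{t,j,\ell}(y,x)$. The difference $S_{j,\ell}f(b(y),x)-S_{j,\ell}f(b_*,x)$ is then identified by telescoping against $\alpha$ via the cocycle identity~\eqref{Cocycle} and the definitions of $H_k$ and $H_{j,\ell}$: the principal term is $f_{j+\ell-1}(y,x)-f_{j+\ell-1}(\alpha_{j+\ell-1},x)$, while the aggregate contribution of $b_0,\dots,b_{\ell-2}$ reassembles (upon applying the decay $\|\Delta_{j,\ell}\|_{j+\ell,\om}=o_\ell(1)$ from Lemma~\ref{Aux2}) into $H_{j+\ell}\circ T_{j+\ell-1}-H_{j+\ell-1}$ modulo an error of size $o_\ell(1)+O(\sqrt\varepsilon)$. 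Rearranging yields \eqref{Deccomp} with $\|g_{t,j,\ell}\|_{L^\infty}=O(\varepsilon^{1/4})+o_\ell(1)$, and the Lasota--Yorke bound of Lemma~\ref{Lasota Yorke} applied to $h$ together with the $\om$-regularity of $f_{j+\ell-1}$ and the $H_k$ (Lemma~\ref{Aux1}) give $\sup_{t,j,\ell}\|g_{t,j,\ell}\|_{j+\ell,\om}<\infty$; Lemma~\ref{al be} then converts sup-norm smallness into $\om^r$-norm smallness, producing the required $c_r(\ell)\to 0$.

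The main obstacle will be extracting an honest $\bbZ$-valued function $Z_{t,j,\ell}$ rather than a mere $\bbZ$-coset. I address this by noting that $t(f_{j+\ell-1}-H_{j+\ell-1}+H_{j+\ell}\circ T_{j+\ell-1})$ has uniformly bounded range (since $|t|\leq T$ and all the functions are uniformly in $\cB_{\cdot,\om}$), so $Z_{t,j,\ell}$ takes values in a bounded, hence finite, subset of $\bbZ$; the gap $2\pi/|t|\geq 2\pi/T$ between neighbouring branches then allows a unique consistent integer assignment, provided $C\varepsilon^{1/4}<\pi/T$, which is exactly what the calibration $\varepsilon\leq\delta(L)$ enforces. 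This is the point where the three hypotheses $|t|\geq\del$, $\ell\leq L+1$, and $\varepsilon\leq\delta(L)$ must be balanced against one another.
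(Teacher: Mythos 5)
Your proposal is correct and is essentially the paper's own argument: the near-maximality of $|\cL_{j,t}^{\ell}h|$ is converted into phase rigidity along the fiber $P_{j,\ell}(\cdot|x)$, the uniform ball-mass bound of Proposition \ref{Ball ass2}(ii) supplies comparison paths near $(\alpha_j,\dots,\alpha_{j+\ell-2},y)$ and $(\alpha_j,\dots,\alpha_{j+\ell-1})$ whose Birkhoff-sum difference is $f_{j+\ell-1}$ plus an $H_{j,\ell}$-type coboundary (and a constant), Lemma \ref{Aux2} trades this for the $H_{j+\ell}$-coboundary up to an $o_\ell(1)$ error, and Lemma \ref{al be} upgrades sup-norm smallness with bounded $\om$-norms to $\om^{r}$-smallness. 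The only deviations are cosmetic: you cancel the unknown phase $\psi(x)$ by a Chebyshev selection inside $E_x$ where the paper integrates $\sin^2$ of phase differences over the product of the two balls (which cancels $\psi(x)$ automatically), and the uniform bound on $\|g_{t,j,\ell}\|_{j+\ell,\om}$ really comes from $Z_{t,j,\ell}$ being integer valued and uniformly close to an $\om$-regular function (hence locally constant) together with $v_{j,\om}(\arg h)=O(1)$ where $|h|$ is near $1$ — so that $\arg h(b(y),x)-\arg h(b_*,x)=O(\theta+a^{\ell-1})$ is absorbed into the error — rather than from Lemma \ref{Lasota Yorke} applied to $h$; your separation-gap argument already contains the needed local constancy.
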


\begin{remark}\label{Rem conn}
If the spaces $\mathcal X_j$ are connected we can take $Z_{t,j,\ell}$ to be a constant. Indeed, since all the functions $f_{j+\ell-1},g_{j,t,\ell},H_{j+\ell-1}, H_{j+\ell}\circ T_{j+\ell-1}$ are continuous we see that $Z_{t,j,\ell}$ is continuous and thus constant.
\end{remark}

\begin{remark}
The important part of the lemma is that the coboundary terms $H$ depend only on $j+\ell$, which will allow us to take the same coboundary parts 
 when \eqref{Deccomp} holds for both $j$ and $j+1$. The fact that $g$ and $Z$ are allowed to depend on $(j,\ell)$ does not cause any issues.
The appearance  of $H_{j+\ell}$ (as opposed to $H_{j,\ell}$) will yield the desired cancellation for  the sums $f_{j+\ell-1}+f_{j+\ell}\circ T_{j+\ell-1}$.  Such cancellations will be crucial for decomposing the summands inside
 such blocks into three components: coboudnaries, small terms  and a lattice valued variable.
The lattice valued variables will disappear after multiplication by $t$ and taking the exponents. The sum with small terms will be dealt with similarly to the case of small $t's$. 
\end{remark}

\begin{proof}[Proof of Lemma \ref{LmTDSmall}]
First, since $\inf_{x}|\cL_{j,t}^{\ell}h(x)|\geq 1-\ve$ then 
\begin{equation}\label{h bd}
\mu_j(|h|)\geq 1-\ve.
\end{equation} 
Now, by Lemma \ref{Le2},
exists $a(\ve)>0$ such that $\lim_{\ve\to 0}a(\varepsilon)=0$ and
$$
\inf|h|\geq 1-a(\varepsilon).
$$
Write $h(y)=r(y) e^{i \phi(y)}$, $r(y)=|h(y)|\geq 1-a(\ve)$.

Next, we have
$$ 
(\cL_{j,t}^\ell h)(x)=\int r(b,x)e^{it S_{j,\ell}f(b,x)+i\phi(b,x)}P_{j,\ell}(db|x). 
$$
Note that for any probability measure $\nu$ on a measurable space $M$ and  measurable functions $q:M\to \bbR$  and $r:M\to (0,\infty)$, 
$$
\left|\int_{M} r(u)e^{iq(u)}d\nu(u)\right|^2=\left(\int_{M} r(u)d\nu(u)\right)^2-2\int_{M}\int_{M}\sin^2\left(\frac12(q(u_1)-q(u_2))\right)r(u_1)r(u_2)\,d\nu(u_1)\,d\nu(u_2).
$$
Therefore,
$$
(1-\ve)^2\leq |\cL_{j,t}^\ell h(x)|^2
=\left(\int r(u,x)P_{j,\ell}(du|x)\right)^2
$$
$$
-2\int\int r(u,x)r(v,x)\sin^2\left(\frac12\left(t(S_{j,\ell}f(u,x)-S_{j,\ell}f(v,x))+\phi(u,x)-\phi(v,x)\right)\right)P_{j,\ell}(du|x)P_{j,\ell}(dv|x)
$$
$$
\leq 1-\int\int r(u,x)r(v,x)\sin^2\left(\frac12\left(t(S_{j,\ell}f(u,x)-S_{j,\ell}f(v,x))+\phi(u,x)-\phi(v,x)\right)\right)P_{j,\ell}(du|x)P_{j,\ell}(dv|x).
$$
Thus,
$$
2\int\int r(u,x)r(v,x)\sin^2\left(\frac12\left(t(S_{j,\ell}f(u,x)-S_{j,\ell}f(v,x))+\phi(u,x)-\phi(v,x)\right)\right)P_{j,\ell}(du|x)P_{j,\ell}(dv|x)\leq 2\ve+\ve^2\leq 3\ve. 
$$
Using that $r\geq 1-a(\ve)$ we see that 
$$
\int\int\sin^2\left(\frac12\left(t(S_{j,\ell}f(u,x)-S_{j,\ell}f(v,x))+\phi(u,x)-\phi(v,x)\right)\right)P_{j,\ell}(du|x)P_{j,\ell}(dv|x)\leq (3/2)\ve(1-a(\ve))^{-2}. 
$$
 If $\varepsilon$ is small enough we get that for all $x\in\cX_{j+\ell,\infty}$,
\begin{equation}\label{upp}
 \int\int\sin^2\left(\frac12\left(t(S_{j,\ell}f(u,x)-S_{j,\ell}f(v,x))+\phi(u,x)-\phi(v,x)\right)\right)P_{j,\ell}(du|x)P_{j,\ell}(dv|x)\leq 2\ve.   
\end{equation}

Next, fix some $x\in\cX_{j+\ell-1,\infty}$ and let $\te\in(0,1)$.
Define $\tilde\alpha=\tilde \alpha(x)=(\alpha_j,...,\alpha_{j+\ell-2},x)$ and
$$
\Gamma_{j,\ell,\te,x}=\left\{(u,v): \max_{k<\ell}(a^{k}\om(d_{j+k}(u_{j+k},\alpha_{j+k})))\leq\te/2, \,\,\max_{k<\ell}(a^{k}\om(d_{j+k}(v_{j+k},\tilde\alpha_{j+k}))) \leq \te/2\right\}.
$$
Then if $\te$ is small enough, since $v_{j,\omega}(\phi)\leq C$ for some constant $C$, for every $(u,v)\in \Gamma_{j,\ell,\te,x}$ we have 
$$
\sin^2\left(\frac12\left(t(S_{j,\ell}f(u,x)-S_{j,\ell}f(v,x))+\phi(u,x)-\phi(v,x)\right)\right)\geq \sin^2\left(\frac12\left(t(S_{j,\ell}f(\alpha,x)-S_{j,\ell}(\tilde \alpha,x))\right)\right)-\delta(\te)
$$
where $\delta(\te)\to 0$ as $\te\to 0$. Now, notice that by Proposition \ref{Ball ass2},
$$
(P_{j+\ell}(\cdot|x)\times P_{j+\ell}(\cdot|x))(\Gamma_{j,\ell,\te,x})\geq c(\ell,\te)>0
$$
for some $c(\ell,\te)$ which does not depend on $j$ or $x$.  
We thus conclude from \eqref{upp} that for every $x$ we have
$$
\sin^2\left(\frac12\left(t(S_{j,\ell}f(\al,x)-S_{j,\ell}(\tilde \al,x))\right)\right)\leq 2\varepsilon (c(\ell,\te))^{-1}+\delta(\te).
$$
Taking $\te=\te_\ell\to 0$ as $\ell\to\infty$ and then 
$\varepsilon=\varepsilon(\ell)$ small enough we get that there exists an integer valued function $Z_{j,\ell}$ such that 
$$
\sup_{x\in\cX_{j+\ell-1,\infty}}|S_{j,\ell}f(\al_j,...,\al_{j+\ell-1},T_{j+\ell-1}x)-S_{j,\ell}f(\al_j,...,\al_{j+\ell-2},x)-(2\pi/t)Z_{j,\ell}(x)|\leq c(\ell)
$$
where $c(\ell)\to0$ as $\ell\to\infty$. Namely,
$$
\sup|H_{j,\ell}\circ T_{j+\ell-1}-H_{j,\ell-1}-(2\pi/t)Z_{j,\ell}|\leq c(\ell).
$$
Next, let 
$$
\cD_{j,\ell}=H_{j,\ell}\circ T_{j+\ell-1}-H_{j,\ell-1}-(2\pi/t)Z_{j,\ell}.
$$
Then 
$$
\|\cD_{j,\ell}\|_\infty\leq c(\ell).
$$
Thus, since $\sup_{j,\ell}\|H_{j,\ell}\circ T_{j+\ell-1}-H_{j,\ell-1}\|_{j+\ell-1,\om}<\infty$ we must have that 
$$
\sup_{j,\ell}\|Z_{j,\ell}\|_{j+\ell-1,\om}<\infty
$$
since it is integer valued and can be approximated by $H_{j,\ell}\circ T_{j+\ell-1}-H_{j,\ell-1}$. Indeed, we have 
$$
|Z_{j,\ell}(x)-Z_{j,\ell}(y)|\leq c(\ell)+C\om_{j+\ell-1}(x,y), C>0
$$
and so if $\om_{j+\ell-1}(x,y)$ is smaller than $1-c(\ell)$ then $Z_{j,\ell}(x)=Z_{j,\ell}(y)$. Therefore, $|Z_{j,\ell}(x)-Z_{j,\ell}(y)|\leq C'\om_{j+\ell-1}(x,y)$ for some $C'>0$ and all $x,y$.
We thus conclude that 
$$
\sup_{j,\ell}\|\cD_{j,\ell}\|_{j+\ell-1,\om}<\infty.
$$
Using Lemma \ref{al be} it  follows that the decomposition  \eqref{Deccomp} holds with $H_{j,\ell-1}$ and $H_{j,\ell}$ instead of $H_{j+\ell-1}$ and $H_{j+\ell}$, respectively. This together with Lemma \ref{Aux2} completes the proof of the lemma.
\end{proof}

\section{Local limit theorems in the irreducible case}\label{SSKeyPropZN}
Building on the results in the previous sections the proof of Theorems \ref{LLT1} and \ref{LLT Latt} proceeds exactly like in \cite{DolgHaf LLT}, but we will still provide most of the details in order to make this paper self contained. In what follows we will write $\|g\|_{*}$ instead of $\|g\|_{j,*}$ when $g:\cX_{j,\infty}\to\bbC$.
 To simplify the proofs we will assume that $\ka_j(f_j)=0$ for all $j$, that is, $\bbE[S_nf]=0$ for all $n$. This could always be achieved by subtracting a 
constant from $f_j.$


\subsection{Contracting blocks}
\label{SSCBDef}
Fix some $T>\del$ and partition $\{|t|: \del\leq |t|\leq T\}$ into intervals  of small length $\del_1$ (yet to be determined).
 In order to prove \eqref{Suff} it is sufficient to show that if $\del_1$ is small enough then 
for every interval $J$ whose length is smaller than $\delta_1$ we have 

\begin{equation}\label{Suff J}
 \int_{J}\|\cL_{0, t}^n \|_{*}dt
=o(\sig_n^{-1}).
\end{equation}

Let us introduce  a simplifying notation. Given an interval 
in the positive integers $I=\{a,a+1,...,a+d-1\}$  we write
$
\cL_{t}^{I}=\cL_{a,t}^{d}=\cL_{a+d-1,t}\circ\cdots\circ\cL_{a+1,t}\circ \cL_{a,t}
$
and
$
 S_I=S_If=\sum_{j\in I}f_j\circ T_0^j.
$
Henceforth we will refer to a finite interval in the integers as a ``block". The length of a block is the number of integers in the block.

 Fix a small $\ve\in(0,1)$ (that will be determined latter).
We say that a block $I$ is {\em contracting} if 
\begin{equation}
\label{DefContrBl}
\sup_{t\in J}\|\cL_{t}^{I}\|_{*}\leq 1-\te(\ve)
\end{equation}
where $\te(\ve)>0$ comes from Corollary \ref{Corollary 4.9}. The following result is a version of \cite[Lemma 6.1]{DolgHaf LLT}.
\begin{lemma}\label{SubLemm}
If $I=\{a+1,a+2,...,a+d\}$ is  a non contracting block of size larger than $2k_0$ 
 (where $k_0$ comes from \eqref{Norm bound}) 
and $I''\subset I$ is a sub-block such that $I\setminus I''$ is composed of a union of two disjoint blocks whose lengths not less than $k_0$ 
then $I''$ is a non-contracting block.
\end{lemma}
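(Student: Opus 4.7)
The plan is to exploit the submultiplicativity of the $\|\cdot\|_*$ operator norm together with the unit bound from Corollary~\ref{Corollary 4.8}. Since $I\setminus I''$ consists of two disjoint blocks each of length at least $k_0$ and $I''$ is a sub-block of $I$, the block $I''$ must sit strictly between them: writing $I=\{a+1,\dots,a+d\}$, there exist $k_0\leq d_1,d_2$ with $d_1+|I''|+d_2=d$ such that
$$
I=I_1\cup I''\cup I_2,\qquad I_1=\{a+1,\dots,a+d_1\},\quad I_2=\{a+d_1+|I''|+1,\dots,a+d\}.
$$
With the convention $\cL_t^{I}=\cL_{a+d-1,t}\circ\cdots\circ\cL_{a,t}$, the composition factors as
$$
\cL_t^{I}=\cL_t^{I_2}\circ\cL_t^{I''}\circ\cL_t^{I_1},
$$
each factor being understood as an operator between the $\|\cdot\|_{\cdot,*}$ spaces attached to the appropriate starting and ending indices.

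Next I would invoke Corollary~\ref{Corollary 4.8}: since $|I_1|\geq k_0$ and $|I_2|\geq k_0$, for every $t\in J\subset[-T,T]$ we have
$$
\|\cL_t^{I_1}\|_{*}\leq 1\qquad\text{and}\qquad\|\cL_t^{I_2}\|_{*}\leq 1.
$$
Combining this with the obvious submultiplicativity of these operator norms,
$$
\|\cL_t^{I}\|_{*}\;\leq\;\|\cL_t^{I_2}\|_{*}\cdot\|\cL_t^{I''}\|_{*}\cdot\|\cL_t^{I_1}\|_{*}\;\leq\;\|\cL_t^{I''}\|_{*}.
$$

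Taking the supremum over $t\in J$ on both sides gives $\sup_{t\in J}\|\cL_t^{I}\|_*\leq\sup_{t\in J}\|\cL_t^{I''}\|_*$. By the hypothesis that $I$ is non-contracting, the left-hand side exceeds $1-\te(\ve)$, so the right-hand side does as well; equivalently, $I''$ is non-contracting in the sense of \eqref{DefContrBl}, which is the desired conclusion. There is really no substantive obstacle here beyond verifying that Corollary~\ref{Corollary 4.8} applies to both $I_1$ and $I_2$ (which is why the hypothesis $|I_1|,|I_2|\geq k_0$, and consequently $d>2k_0$, is imposed) and that the $\|\cdot\|_*$ norms on the intermediate and final spaces line up correctly under the factorization.
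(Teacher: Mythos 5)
Your argument is correct and is exactly the intended one: the paper gives no separate proof (it cites \cite[Lemma 6.1]{DolgHaf LLT}), and the underlying argument is precisely your factorization $\cL_t^{I}=\cL_t^{I_2}\circ\cL_t^{I''}\circ\cL_t^{I_1}$ combined with the bound \eqref{Norm bound} for the two outer blocks of length at least $k_0$ and submultiplicativity of the $\|\cdot\|_{*}$ operator norms, followed by taking the supremum over $t\in J$. One cosmetic remark: with $I=\{a+1,\dots,a+d\}$ the convention should read $\cL_t^{I}=\cL_{a+d,t}\circ\cdots\circ\cL_{a+1,t}$ (your displayed convention is off by one), which does not affect the argument.
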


Combining Lemmata  \ref{LmTDSmall} and \ref{SubLemm} together with Corollary \ref{Corollary 4.9} we
obtain the following result (see the proof of \cite[Corollary 6.2]{DolgHaf LLT}).

\begin{corollary}\label{CutLemm}
 Let $H_{k}$ be the functions   from Lemma \ref{LmTDSmall}. Let $k_0$ be 
 from
 \eqref{Norm bound} and $k_2(\cdot)$ be  from Corollary \ref{Corollary 4.9}.
If $\ve$ is small enough then 
there exists $L=L(\ve)\geq \max(k_0,2n_0+1)$ 
such that $L(\ve)\to \infty$ as $\ve\to 0$ with the following properties. 
If $I=\{a,a+1,...,a+d-1\}$ 
is  a non contracting block such that $d>2k_0+k_2(\ve)+L(\ve)$ 
then for every $s\in I$ with 
$$
a+k_0+L(\ve)+k_2(\ve)\leq s\leq a+d-k_0-1
$$
and all $t\in J$ we can write
\begin{equation}\label{Reduc}
tf_{s}=tg_s+tH_{s}-tH_{s+1}\circ T_s+2\pi Z_s
\end{equation}
where $ \|g_{s}\|_{s,\om^r}=O(\del_1)+ c_r(L)$ where $c_r(L)\to 0$  as $L\to \infty$ (here $\del_1$ is the length of the interval $J$  in \eqref{Suff J} and $r\in(0,1)$ is a fixed number). 
\end{corollary}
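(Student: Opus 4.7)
The strategy is to apply Lemma \ref{LmTDSmall} to a carefully positioned sub-block $I^*$ of $I$. The non-contraction of $I^*$ is inherited from $I$ via Lemma \ref{SubLemm}, and the normalized almost-invariant function needed as input to Lemma \ref{LmTDSmall} is produced by Corollary \ref{Corollary 4.9}. Crucially, $I^*$ must be positioned so that the index $j+\ell-1$ produced by Lemma \ref{LmTDSmall} is exactly $s$; only then does the coboundary term become $H_s-H_{s+1}\circ T_s$ in terms of the universal functions $H_k$ from Lemma \ref{Aux1}, independent of the choice of sub-block. This is what allows the $H$-terms to depend only on $s+1$, as stressed in the remark following Lemma \ref{LmTDSmall}.

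Concretely, I would first fix $L=L(\varepsilon)$ large enough that $L\geq \max(k_0,2n_0+1)+k_2(\varepsilon)+1$ and the threshold $\delta(L)$ from Lemma \ref{LmTDSmall} is at least the $\theta(\varepsilon)$ from Corollary \ref{Corollary 4.9}, which is compatible with $L(\varepsilon)\to\infty$ as $\varepsilon\to 0$. Set $m=L+1-k_2(\varepsilon)$, so that $k_0\leq m\leq L+1$, let $j^*=s-m+1$, and put $I^*=\{j^*,j^*+1,\ldots,j^*+k_2+m-1\}$, a sub-block of $I$ of length $k_2+m$. Using the admissible range $a+k_0+L+k_2\leq s\leq a+d-k_0-1$ together with $d>2k_0+k_2+L$, one checks that $I^*\subset I$ and that the complement $I\setminus I^*$ consists of two blocks each of length at least $k_0$. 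Hence, by Lemma \ref{SubLemm}, $I^*$ is non-contracting, so some $t_0\in J$ satisfies $\|\cL_{j^*,t_0}^{k_2+m}\|_{*}>1-\theta(\varepsilon)$.

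Tracing through the proof of Corollary \ref{Corollary 4.9} (the infimum bound is, in fact, established for the function $H=\cL_{j^*,t_0}^m h$), we obtain $h$ with $\|h\|_{j^*,*}\leq 1$ such that $\inf_x|\cL_{j^*,t_0}^m h(x)|\geq 1-\varepsilon$. Since $m\leq L+1$, this is precisely the hypothesis of Lemma \ref{LmTDSmall} for the parameters $(j^*,t_0,m)$. That lemma then yields, using $j^*+m-1=s$,
\begin{equation*}
t_0 f_s = t_0 g_{s,t_0}+t_0 H_s - t_0 H_{s+1}\circ T_s + 2\pi Z_{s,t_0},
\end{equation*}
with $Z_{s,t_0}$ integer valued and $\|g_{s,t_0}\|_{s,\omega^r}\leq c_r(m)\to 0$ as $L\to\infty$.

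To obtain the claim uniformly in $t\in J$, I would then write $tf_s=t_0 f_s+(t-t_0)f_s$ and absorb $(t-t_0)f_s$ into the error term. Since $|t-t_0|\leq\delta_1$ and $\sup_j\|f_j\|_{j,\omega^r}<\infty$, this costs at most $O(\delta_1)$ in $\|\cdot\|_{s,\omega^r}$, giving the bound $\|g_s\|_{s,\omega^r}=O(\delta_1)+c_r(L)$. The coboundary $H_s$ and the integer valued $Z_s$ do not depend on $t$. The most delicate point is the simultaneous calibration of $k_0,k_2(\varepsilon),L(\varepsilon),\delta(L),\theta(\varepsilon)$ so that the three ingredients (Lemma \ref{SubLemm}, Corollary \ref{Corollary 4.9}, Lemma \ref{LmTDSmall}) all apply concurrently to the same sub-block $I^*$, and that the right-hand slack of size $k_2$ imposed on $I\setminus I^*$ is absorbed by the lower bound $s\geq a+k_0+L+k_2$.
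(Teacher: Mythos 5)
Your route is exactly the combination the paper's one-line proof gestures at (Lemma \ref{SubLemm} to transfer non-contraction to a sub-block, Corollary \ref{Corollary 4.9} to produce the almost-invariant function, Lemma \ref{LmTDSmall} applied so that its index $j+\ell-1$ equals $s$, then passage from the special $t_0\in J$ to all $t\in J$ at a cost of $O(\del_1)$), and your reading of Corollary \ref{Corollary 4.9} through its proof (the infimum bound is really for $\cL_{j^*,t_0}^{m}h$, not $\cL_{j^*,t_0}h$) is the correct one. Two calibration points are stated slightly wrong but are harmless: the condition you actually need is $\ve\le\del(L(\ve))$, since $\ve$ is the closeness parameter fed into Lemma \ref{LmTDSmall}, not ``$\del(L)\ge\te(\ve)$''; and to reach \eqref{Reduc} literally, with $tH_s-tH_{s+1}\circ T_s$ rather than $t_0H_s-t_0H_{s+1}\circ T_s$, you must also absorb $(t-t_0)\left(H_s-H_{s+1}\circ T_s\right)$ into the small term and divide by $t$, which costs another $O(\del_1)$ because $\sup_j\|H_j\|_{j,\om}<\infty$ and $|t|\ge\del$ on $J$.

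The genuine problem is the placement check. Since Corollary \ref{Corollary 4.9} yields the infimum bound for $H=\cL_{j^*,t_0}^{m}h$, the block it must be applied to is $\{j^*,\dots,j^*+k_2+m-1\}$, whose right endpoint is $s+k_2(\ve)$ for every admissible choice of $m$; this overshoot by $k_2(\ve)$ cannot be removed by repositioning, because the decomposition index of Lemma \ref{LmTDSmall} always lies $k_2(\ve)$ places to the left of the right end of the block fed to Corollary \ref{Corollary 4.9}. Consequently your assertion that ``one checks that $I^*\subset I$ and that $I\setminus I^*$ consists of two blocks each of length at least $k_0$'' is false for $s$ within $k_0+k_2(\ve)$ of the right end of $I$: the right gap has length $a+d-1-s-k_2(\ve)<k_0$, and for $s>a+d-1-k_2(\ve)$ the block $I^*$ is not even contained in $I$. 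In particular the argument does not reach the stated upper limit $s=a+d-k_0-1$, and under the stated length hypothesis $d>2k_0+k_2(\ve)+L(\ve)$ the two gap conditions can be simultaneously unsatisfiable for every $s$ in the stated range (one needs $d\ge L(\ve)+2k_0+2k_2(\ve)+1$). What your argument proves is the conclusion for $a+k_0+L(\ve)+k_2(\ve)\le s\le a+d-1-k_0-k_2(\ve)$ (the left-hand buffer $k_2(\ve)$ is not even used); note $k_2(\ve)$ is not an absolute constant, so this cannot be waved away. To be rigorous you should either state and prove the corollary with the $k_2(\ve)$ buffer moved to the right of $s$ (the downstream applications, which build in $D(\ve)=4(L+k_0+k_2)$ and $2k_0$ margins, have ample slack for this), or extend Lemma \ref{SubLemm} to allow an empty right gap and adjust the range accordingly, rather than asserting that the containment and gap conditions follow from the stated range.
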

\begin{remark}
The functions $g_s$ and $Z_s$ can also depend on $t,\ve$ and $I$, but it is really important  for the next steps that the functions $H_s$ do not depend on $I$. 
\end{remark}

The last  key tool 
needed for the proof of \eqref{Suff J}  is the following  simple fact (see \cite[Lemma 6.4]{DolgHaf LLT}). 
\begin{lemma}\label{Qqad L}
 Let $Q(h)=ah^2+bh+c$ be a quadratic function with $a>0$ 
  and $\cJ$ be an interval.
  Then there is an absolute constant $C>0$ such that
 $$
 \int_{\cJ} e^{-Q(h)}dh\leq 
 \frac{C}{\sqrt{a}} \exp\left[-{  \min_{\cJ}} Q(h)\right].
 $$
\end{lemma}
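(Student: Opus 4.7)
The plan is to complete the square on $Q$ and reduce to a standard Gaussian integral. Write $Q(h) = a(h - h_0)^2 + Q(h_0)$ where $h_0 = -b/(2a)$ is the global minimum of $Q$ on $\bbR$. After the substitution $u = \sqrt{a}(h - h_0)$ one obtains
\begin{equation*}
\int_{\cJ} e^{-Q(h)} dh = \frac{e^{-Q(h_0)}}{\sqrt{a}} \int_{\sqrt{a}(\cJ - h_0)} e^{-u^2} du \leq \frac{\sqrt{\pi}}{\sqrt{a}} e^{-Q(h_0)}.
\end{equation*}
If $h_0 \in \cJ$, then $\min_{\cJ} Q = Q(h_0)$ and the bound is already of the claimed form with $C = \sqrt{\pi}$.

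The only case requiring a little care is when $h_0 \notin \cJ$, so that the minimum on $\cJ$ is attained at the endpoint $h_* \in \cJ$ closest to $h_0$. The goal is to show that on $\cJ$ one still has $Q(h) \geq Q(h_*) + a(h-h_*)^2$, which allows the same Gaussian estimate with $h_0$ replaced by $h_*$. To see this, assume WLOG $\cJ \subset [h_0,\infty)$ with $h_* = \inf \cJ$; then for $h \in \cJ$,
\begin{equation*}
Q(h) - Q(h_*) = a(h - h_0)^2 - a(h_* - h_0)^2 = a(h - h_*)(h + h_* - 2h_0) \geq a(h - h_*)^2,
\end{equation*}
since $h + h_* - 2h_0 \geq h - h_*$ when $h_* \geq h_0$. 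Therefore
\begin{equation*}
\int_{\cJ} e^{-Q(h)} dh \leq e^{-Q(h_*)} \int_{\cJ} e^{-a(h - h_*)^2} dh \leq \frac{\sqrt{\pi}}{\sqrt{a}} e^{-\min_{\cJ} Q},
\end{equation*}
which gives the claim with the absolute constant $C = \sqrt{\pi}$.

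There is essentially no obstacle here; the only subtlety is recognizing that when the unconstrained minimum $h_0$ lies outside $\cJ$, one must not lose the full Gaussian decay by simply bounding $Q$ below by $\min_\cJ Q$ on all of $\cJ$. The quadratic identity above supplies the needed quadratic lower bound anchored at the endpoint $h_*$, which preserves the $1/\sqrt{a}$ scaling.
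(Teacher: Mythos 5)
Your proof is correct and complete. The paper itself does not prove this lemma but simply cites \cite[Lemma 6.4]{DolgHaf LLT}; your argument --- completing the square and, when the vertex $h_0=-b/(2a)$ lies outside $\cJ$, using the identity $Q(h)-Q(h_*)=a(h-h_*)(h+h_*-2h_0)\geq a(h-h_*)^2$ to anchor a Gaussian bound at the nearest endpoint $h_*$ --- is the standard proof of this fact and yields the stated estimate with $C=\sqrt{\pi}$. The only cosmetic remark is that if $\cJ$ is not closed one should read $\min_{\cJ}Q$ as $\inf_{\cJ}Q$ (with $h_*$ the endpoint of the closure), which changes nothing in the argument.
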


Next, let 
$$
D(\ve)=4\left(L(\ve)+k_0+k_2(\ve)\right)
$$
where $L(\ve)$ comes from Corollary \ref{CutLemm}, $k_2(\ve)$ comes from Corollary \ref{Corollary 4.9} and $k_0$ comes from \eqref{Norm bound}. 
Let $L_n$ be the maximal number of contracting blocks contained in 
$I_n=\{0,1,...,n-1\}$, 
 such that 
the distance between consecutive blocks is at least $k_0$ and the 
length of each block is between $D(\ve)$ and $2D(\ve)$.

 Let $\mathfrak B\!\!=\!\!\{B_1, B_2, \dots, B_{L_n}\}$ be a corresponding set of contracting blocks
separated by at least $k_0$, 
and $\mathfrak A\!\!=\!\!\{A_1, \!\dots\!, A_{\tilde L_n}\}$ be a partition into intervals of the compliment of the union of the blocks $B_j$ in $I_n$, ordered so that $A_j$ is to the left of $A_{j+1}$. 
Thus
$\tilde L_n\!\!\in\!\! \{L_n\!-\!1, L_n, L_n\!+\!1\}.$

Like in \cite{DolgHaf LLT}
we will  prove of \eqref{Suff J} by considering three cases  depending on the size of $L_n.$

\subsection{Large number of contracting blocks}
\label{SSLrgeCB}

The first case is when $L_n$ is at least of logarithmic order in $\sig_n$. More precisely, we have the following result.

\begin{proposition}\label{case 1}
    There is a constant $c=c(\ve)>0$ such that \eqref{Suff J} holds if $L_n\geq c\ln\sig_n$. 
\end{proposition}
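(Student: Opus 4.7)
The plan is to exploit the block decomposition of $\cL_{0,t}^n$ induced by the contracting blocks and then use a simple product/integration argument. First I would write $\{0,1,\dots,n-1\}$ as the union of $B_1,\dots,B_{L_n}$ and the complementary intervals $A_1,\dots,A_{\tilde L_n}$ (ordered left to right), and use the semigroup identity
$$
\cL_{0,t}^n \;=\; \cL_t^{C_1} \circ \cL_t^{C_2} \circ \cdots \circ \cL_t^{C_{L_n + \tilde L_n}},
$$
where the $C_\ell$'s are exactly the $A_j$'s and $B_i$'s arranged in order along $I_n$. Since $\cL_{j,t}^m:B_j\to B_{j+m}$ composes honestly across these blocks, the operator norm $\|\cdot\|_*$ is submultiplicative along this decomposition, giving
$$
\sup_{t\in J}\|\cL_{0,t}^n\|_* \;\leq\; \prod_{i=1}^{L_n}\sup_{t\in J}\|\cL_t^{B_i}\|_* \cdot \prod_{j=1}^{\tilde L_n}\sup_{t\in J}\|\cL_t^{A_j}\|_*.
$$

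Next I would bound each factor separately. Each $B_i$ is contracting, so by the defining inequality \eqref{DefContrBl} its factor is at most $1-\te(\ve)$. Each interior $A_j$ has length at least $k_0$ by construction, so Corollary \ref{Corollary 4.8} (that is, \eqref{Norm bound}) gives a bound of $1$. The only intervals that need separate care are the (at most two) end-pieces of $\mathfrak A$ whose length could be strictly less than $k_0$. For those I would invoke the Lasota--Yorke inequality (Lemma \ref{Lasota Yorke}) together with $\|\cL_{j,t}^m h\|_{L^\infty}\leq\|h\|_{L^\infty}$, noting that the definition of $\|\cdot\|_*$ absorbs the factor $2C_1$; this yields a uniform constant $M=M(T)$ such that $\|\cL_{j,t}^m\|_*\leq M$ for \emph{every} $j,m$ and $t\in J$. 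Hence the end-pieces contribute only a factor $M^2$, and we obtain
$$
\sup_{t\in J}\|\cL_{0,t}^n\|_* \;\leq\; M^2\, (1-\te(\ve))^{L_n}.
$$

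Finally I would plug in the logarithmic lower bound on $L_n$. Choose
$$
c = c(\ve) \;>\; \frac{2}{\ln\bigl(1/(1-\te(\ve))\bigr)},
$$
so that $L_n \geq c\ln\sig_n$ forces $(1-\te(\ve))^{L_n}\leq \sig_n^{-2}$. Since $|J|\leq \del_1$ is a fixed constant and $\sig_n\to\infty$, this gives
$$
\int_{J}\|\cL_{0,t}^n\|_*\,dt \;\leq\; |J|\, M^2\, \sig_n^{-2} \;=\; o(\sig_n^{-1}),
$$
which is precisely \eqref{Suff J}.

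I do not expect any real obstacle here: the only slightly delicate point is the treatment of the (possibly short) end-pieces of $\mathfrak A$, which is handled by the uniform Lasota--Yorke bound; the rest is the bookkeeping of choosing $c(\ve)$ large enough so that the geometric contraction beats $\sig_n^{-1}$ with room to spare. The harder cases (moderate and small $L_n$) are presumably handled in the subsequent subsections where one must use Corollary \ref{CutLemm}, the cancellation of the $H_s$'s across adjacent summands, and the Gaussian integration via Lemma \ref{Qqad L}.
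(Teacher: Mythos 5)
Your proposal is correct and follows essentially the same route as the paper: submultiplicativity of the $\|\cdot\|_*$ operator norms along the ordered decomposition into the $B_i$'s and $A_j$'s, the bound $1-\te(\ve)$ per contracting block from \eqref{DefContrBl}, the bound $\le 1$ on the long complementary intervals from \eqref{Norm bound}, and then choosing $c(\ve)$ large enough that $(1-\te(\ve))^{L_n}\le\sig_n^{-2}$. Your explicit treatment of the possibly short end-pieces of $\mathfrak A$ via a uniform Lasota--Yorke constant $M(T)$ is a harmless refinement of a point the paper leaves implicit, and it does not change the argument.
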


\begin{proof}
By the submultiplicativity of operator norms, Corollary \ref{Corollary 4.9} and 
\eqref{DefContrBl},
 $\forall t\in J$ we have
$$
\|\cL_{0,t}^n\|_*\leq \left(\prod_{k}\|\cL_{t}^{A_{k}}\|_*\right)\cdot \left(\prod_{j}\|\cL_{t}^{B_{j}}\|_*\right)\leq 
\left(1-\te(\ve)\right)^{L_n}.
$$
Here $\cL_t^{B}=\prod_{b\in B}\cL_{t,b}$.
Hence \eqref{Suff J} holds when $L_n\geq c\ln\sig_n$  for $c$ large enough. 
\end{proof}

\subsection{Moderate number of contracting blocks}

It remains to consider the case where $L_n\leq c\ln \sig_n$ for $c=c(\ve)$ from Proposition \ref{case 1}.

\begin{proposition}\label{case 2}
There is $\ve_0>0$ such that   \eqref{Suff J} holds if $\ve<\ve_0$ and $L_n\to\infty$ but
$L_n\leq c\ln \sig_n$,
   where $c=c(\ve)$ comes from Proposition \ref{case 1}.
\end{proposition}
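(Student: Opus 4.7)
The plan is to combine exponential decay from the $L_n$ contracting blocks with Gaussian decay in $t$ coming from the reduction of Corollary \ref{CutLemm} on the non-contracting complementary pieces. By sub-multiplicativity of $\|\cdot\|_*$,
\[
\|\cL_{0,t}^n\|_* \leq (1-\te(\ve))^{L_n}\prod_k \|\cL_t^{A_k}\|_*.
\]
Since $L_n\to\infty$ the first factor tends to $0$; but in the regime $L_n\leq c\log\sig_n$ this alone is too weak to produce $o(\sig_n^{-1})$ after integration over $J$, so the second factor must supply genuine Gaussian decay in $t^2\sig_n^2$.

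On each $A_k$ with $|A_k|>2k_0+k_2(\ve)+L(\ve)$ I would isolate its interior $A_k^\circ$ (length $|A_k|-2k_0-k_2(\ve)-L(\ve)$) and apply Corollary \ref{CutLemm} to write $tf_s=tg_s+t(H_s-H_{s+1}\circ T_s)+2\pi Z_s$ for $s\in A_k^\circ$, with $\|g_s\|_{s,\om^r}=O(\del_1)+c_r(L(\ve))$. Summing over $s\in A_k^\circ$ telescopes the $H$-part and kills the integer part under $e^{i\cdot}$, yielding the factorization
\[
\cL_t^{A_k^\circ} = \cM_{e_k+1}^{-1}\,\tilde{\cL}^{A_k^\circ}\,\cM_{s_k},
\]
where $s_k,e_k$ are the endpoints of $A_k^\circ$, $\cM_j(h)=e^{itH_j}h$ is multiplication, and $\tilde{\cL}^{A_k^\circ}$ is the Fourier-Markov operator built from $\{tg_s\}_{s\in A_k^\circ}$. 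A direct computation yields $\|\cM_j\|_*\leq M:=1+T\sup_j\|H_j\|_{j,\om}/(2C_1)$. For $\del_1$ small and $\ve$ small enough that $L(\ve)$ is large, $T(\del_1+c_r(L(\ve)))\leq \del_0$; Proposition \ref{PrSmVarBlock} (valid with modulus $\om^r$, which still satisfies Assumptions \ref{Assss}--\ref{Ball Ass}) then gives $\|\tilde{\cL}^{A_k^\circ}\|_*\leq C_2\,e^{-c_2 t^2 V_k}$ with $V_k=\text{Var}(S_{A_k^\circ}g)$, and combined with the trivial bound from Corollary \ref{Corollary 4.8} one obtains $\|\cL_t^{A_k}\|_*\leq \min(1,\,M^2C_2 e^{-c_2 t^2 V_k})$.

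Set the threshold $V_\star=2\log(M^2C_2)/(c_2\del^2)$. For $|t|\geq\del$, on the ``bad'' blocks with $V_k<V_\star$ use the bound $1$, and on the ``good'' blocks absorb the constant into the exponent to get $\leq e^{-(c_2/2)t^2 V_k}$. Then $\prod_k\|\cL_t^{A_k}\|_*\leq e^{-(c_2/2)t^2 V_n^\star}$ with $V_n^\star=\sum_{k\text{ good}}V_k$, and Lemma \ref{Qqad L} gives
\[
\int_J \|\cL_{0,t}^n\|_*\,dt \leq (1-\te(\ve))^{L_n}\cdot O\bigl(V_n^{\star -1/2}\bigr).
\]
The $B_j$'s (each of bounded length), the bad blocks, and the $A_k$-boundaries contribute $O(V_\star \tilde L_n+L_n)=O(\log\sig_n)=o(\sig_n^2)$ to a standard mixing-based variance decomposition, so once $V_n:=\sum_k V_k\geq c\sig_n^2$ is established, $V_n^{\star -1/2}=O(\sig_n^{-1})$ and $(1-\te(\ve))^{L_n}\to 0$ together deliver $o(\sig_n^{-1})$.

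The main obstacle I anticipate is the variance comparison $V_n\gtrsim \sig_n^2$: the integer parts $Z_s$ in the decomposition $f_s=g_s+(H_s-H_{s+1}\circ T_s)+(2\pi/t)Z_s$ could a priori carry a linear-in-block-length share of $\text{Var}(S_{A_k^\circ}f)$, leaving each $V_k$ too small. Ruling this out uses irreducibility (Corollary \ref{H corr}(ii)): if the $Z_s$'s accumulated substantial variance globally, the local reductions of Corollary \ref{CutLemm} could be spliced---via the gluing mechanism of \cite{DolgHaf LLT}---into a single $(2\pi/t)\bbZ$-valued global reduction of $(f_j)$, forcing $(2\pi/t)\in \mathsf{H}$ and contradicting $\mathsf{H}=\{0\}$. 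This compatibility-of-local-reductions step is the technical heart of the argument and mirrors \cite[\S 6--7]{DolgHaf LLT}.
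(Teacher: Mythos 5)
There is a genuine gap, and it sits exactly where you flagged it: your argument needs the lower bound $\sum_k V_k=\sum_k \mathrm{Var}(S_{A_k^\circ}g)\gtrsim \sig_n^2$ for the \emph{reduced} functions $g_s$, and this is never established. Deferring it to a splicing/gluing argument via Corollary \ref{H corr}(ii) does not close the hole: that route would at best give a qualitative contradiction along subsequences (``$2\pi/t\in\mathsf H$''), not a quantitative bound uniform in $n$, in $t_0\in J$ and in the block structure; and note that Proposition \ref{case 2} is stated and used \emph{without} any irreducibility hypothesis (it also feeds into the lattice LLT, where the reducibility alternative is only allowed to surface in Proposition \ref{case 3}). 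Indeed, if one could prove $\sum_k\mathrm{Var}(S_{A_k^\circ}g)\gtrsim\sig_n^2$ in this generality, then for $|t|\ge\del$ your bound $e^{-c\,t^2\sum_k V_k}$ would already be $e^{-c\del^2\sig_n^2}$, which would prove decay even when $L_n$ is bounded --- contradicting the fact that Proposition \ref{case 3} genuinely needs the reducibility alternative. This is a strong sign that the missing variance estimate is essentially equivalent to the hard part of the whole theorem, so it cannot be treated as a technical footnote. A secondary issue: since $g_s$ and $Z_s$ in Corollary \ref{CutLemm} depend on $t$, your exponent $t^2V_k(t)$ is not a quadratic in $t$, so the subsequent use of Lemma \ref{Qqad L} with a fixed quadratic is not justified as written.

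The paper's proof avoids the problem entirely and is worth internalizing. One does \emph{not} accumulate Gaussian decay over all the $A_k$'s: by the variance decomposition (contracting blocks and cross terms contribute $o(\sig_n^2)$ since $L_n=O(\ln\sig_n)$), a pigeonhole argument produces a \emph{single} block $A_{m_n}$ with $\|S_{A_{m_n}}\|_{L^2}\ge \sig_n/(4\sqrt{L_n})$ (Lemma \ref{CombLemma}); all other blocks are only bounded by $1$ via \eqref{Norm bound}. On that one block the reduction of Lemma \ref{A lemma} is applied only at the \emph{center} $t_0$ of $J$, and for $t=t_0+h$ one writes $tf_s=(g_s+hf_s)+t_0H_s-t_0H_{s+1}\circ T_s+2\pi Z_s$, so the deviation enters linearly through $hf_s$. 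Proposition \ref{PrSmVarBlock} then gives $\|\cL_t^{A_{m_n}'}\|_*\lesssim e^{-cQ_n(h)}$ with $Q_n(h)=\mathrm{Var}\bigl(S_{A_{m_n}'}(g+hf)\bigr)$, a genuine nonnegative quadratic in $h$ whose leading coefficient is $\mathrm{Var}(S_{A_{m_n}'}f)\gtrsim \sig_n^2/L_n$; Lemma \ref{Qqad L} then yields $\int_J\|\cL_t^{A_{m_n}}\|_*\,dt=O(\sqrt{L_n}\,\sig_n^{-1})$ with no control whatsoever on $\mathrm{Var}(Sg)$ or on the integer parts $Z_s$, and the prefactor $(1-\te(\ve))^{L_n}\to0$ (here is where $L_n\to\infty$ is used) upgrades this to $o(\sig_n^{-1})$. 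If you want to salvage your write-up, replace the ``sum of Gaussian gains over all good blocks'' step by this center-of-interval trick on a single pigeonholed block; the telescoping of the $H_s$ (which you did use correctly, thanks to $H_s$ depending only on $s$) and the absorption of the multiplication operators $e^{\pm it_0H}$ can stay as they are.
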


We first need the following result, which is a version of \cite[Lemma 6.7]{DolgHaf LLT} 

\begin{lemma}\label{A lemma}
Let
$A=\{a,a+1,...,b\} \subset\{0,1,...,n-1\}$ be a block of length greater or equal to $4D(\ve)+1$, which 
does not intersect contracting blocks from $\mathfrak{B}$. Define $a'=a+2 k_0+L(\ve)+k_2(\ve)$ and 
$b'=b-2k_0-1$, and set $A'=\{a',a'+1,...,b'\}$. Then for every 
$s\in A'$ and all $t\in J$ we can write 
\begin{equation}\label{deccomp}
    tf_s=g_{t,s}+H_s-H_{s+1}\circ T_s+2\pi Z_s
\end{equation}
where $\|g_{t,s}\|_{j,\om^r} \leq C_r(\ve)+c_0\del_1$ for some $C_r(\ve)$ such that $C_r(\ve)\to 0$ as $\ve\to 0$ (and $r\in(0,1)$ is some fixed number),  $c_0$ is a constant (here $\del_1$ is the length of the underlying interval $J$) and $Z_s$ are integer valued.
\end{lemma}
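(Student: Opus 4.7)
The plan is to apply Corollary \ref{CutLemm} to a carefully chosen non-contracting block $I_s$ containing each point $s \in A'$ in its ``good range,'' where the non-contractibility of $I_s$ will be forced by the maximality of $L_n$.

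First I fix $s \in A'$ and construct a block $I_s = \{c_s, c_s + 1, \dots, c_s + D(\ve) - 1\}$ of length exactly $D(\ve)$. I choose $c_s$ so that $s$ lies in the good range $[c_s + k_0 + L(\ve) + k_2(\ve),\, c_s + D(\ve) - k_0 - 1]$ required by Corollary \ref{CutLemm}, and so that $I_s \subset [a + k_0,\, b - k_0]$. Combining these two requirements reduces to placing $c_s$ in the interval
$$
\bigl[\max(a + k_0,\, s - D(\ve) + k_0 + 1),\; \min(b - k_0 - D(\ve) + 1,\, s - k_0 - L(\ve) - k_2(\ve))\bigr].
$$
A short interval-arithmetic check, using that $|A| \ge 4D(\ve) + 1$, $D(\ve) = 4(L(\ve) + k_0 + k_2(\ve))$, $a' = a + 2k_0 + L(\ve) + k_2(\ve)$ and $b' = b - 2k_0 - 1$, shows that this interval is non-empty for every $s \in A'$: for $s$ near $a'$ the choice $c_s = a + k_0$ works, for $s$ near $b'$ one may take $c_s = b - k_0 - D(\ve) + 1$, and the two admissible windows overlap in the middle of $A'$.

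Next, I claim each $I_s$ is non-contracting. Since $A$ is a component of the complement of $\bigcup_j B_j$ in $I_n$ and $I_s \subset A$, the block $I_s$ is disjoint from every $B_j \in \mathfrak B$. Its length $D(\ve)$ lies in the admissible range $[D(\ve), 2D(\ve)]$, and the inclusion $I_s \subset [a+k_0,\, b-k_0]$ ensures $\mathrm{dist}(I_s, B) \ge k_0$ for every $B \in \mathfrak B$, since the endpoints of $A$ are already adjacent to (or coincide with the neighbors of) the nearest elements of $\mathfrak B$. If $I_s$ were contracting, $\mathfrak B \cup \{I_s\}$ would therefore be an admissible family of $L_n + 1$ contracting blocks, contradicting the maximality of $L_n$.

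Finally, Corollary \ref{CutLemm} applied to the non-contracting block $I_s$, whose length $D(\ve) > 2k_0 + k_2(\ve) + L(\ve)$ and which contains $s$ in its good range, directly yields the decomposition \eqref{deccomp} with $\|g_{t,s}\|_{s, \om^r} \le c_r(L(\ve)) + O(\del_1)$. Setting $C_r(\ve) := c_r(L(\ve))$, which tends to $0$ as $\ve \to 0$ because $L(\ve) \to \infty$, gives the stated bound. The main obstacle is the bookkeeping of constants in step one; this is exactly why $A'$ is defined with the margins $2k_0 + L(\ve) + k_2(\ve)$ and $2k_0 + 1$, and why $D(\ve)$ is taken to be $4(L(\ve) + k_0 + k_2(\ve))$. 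A key structural point, emphasized in the remark following Corollary \ref{CutLemm}, is that the coboundary functions $H_s, H_{s+1}$ are the universal functions from Lemma \ref{LmTDSmall} and do \emph{not} depend on the choice of $I_s$, so the decompositions produced for different $s \in A'$ fit together into the single formula \eqref{deccomp}; the auxiliary dependence of $g_{t,s}$ and $Z_s$ on $I_s$ (as well as on $t$ and $\ve$) is harmless.
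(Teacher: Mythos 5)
Your proof is correct and takes essentially the same route as the paper, whose one-line argument likewise invokes the maximality of $\mathfrak B$ to produce a non-contracting block of length $D(\ve)$ near $s$ and then applies Corollary \ref{CutLemm}. Your careful placement of the block $I_s$ (so that $s$ lies in the good range $[c_s+k_0+L(\ve)+k_2(\ve),\,c_s+D(\ve)-k_0-1]$ and $I_s\subset[a+k_0,\,b-k_0]$ keeps distance at least $k_0$ from the blocks of $\mathfrak B$) is in fact a tightening of the paper's phrase ``the block of length $D(\ve)$ ending at $s$'', which read literally would leave $s$ outside that block's good range and, for $s$ near $a'$, could protrude beyond $A$; the margins built into $A'$ and the choice $D(\ve)=4(L(\ve)+k_0+k_2(\ve))$ are exactly what make your placement possible.
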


\begin{proof}
 Let $s\in A'.$ Then, since $\mathfrak{B}$ is maximal, the block of length $D(\ve)$ ending at $s$
is non contracting. Therefore the result follows from Corollary \ref{CutLemm}.
\end{proof}

We also need the following result, whose proof is proceeds exactly like in \cite[Lemma 6.8]{DolgHaf LLT}
\begin{lemma}\label{CombLemma}
Suppose that $L_n=o\left(\sig_n^2\right)$. Then 
 there exists $ m=m_n\in \{1, \dots, \tilde L_n\}$ such that
\vskip0.2cm

(i) For all $n$ large enough we have 
$
\|S_{A_{m}}\|_{L^2}\geq \frac{\sig_n}{4\sqrt{L_n}}$ $ \left(\text{and so } |A_{m}|\geq \frac{\sig_n}{4\sqrt{L_n}\|f\|_\infty} \right)
$
where $ \|f\|_\infty=\sup_j\|f_j\|_\infty$ and $|A_m|$ is the size of $A_m$.
\vskip0.2cm

(ii)
Write $A_{m_n}=\{a_n,a_{n}+1,..., b_n\}$ and set 
$A_{m_n}'=\{a_n',a_n'+1,...,b_n'\}$, where 
$$a_n'=a_n+ 2 k_0+L(\ve)+k_2(\ve)\,\, \text{ and }\,\,
b_n'=b_n- 2k_0-1.
$$
Then, if $n$ is large enough, \eqref{deccomp} holds for every $s\in A_{m_n}'$ and all $t\in J$.

\end{lemma}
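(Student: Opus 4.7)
The plan is to prove (i) via an $L^2$ variance decomposition combined with the exponential $\phi$-mixing from Lemma~\ref{Phi Lemma}, and then to derive (ii) as an immediate consequence of (i) and Lemma~\ref{A lemma}.

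For (i), I would write $S_n=\sum_k S_{A_k}+\sum_j S_{B_j}$. By construction the $A_k$'s (respectively the $B_j$'s) are separated by blocks of length at least $D(\ve)$ (respectively at least $k_0$), and Lemma~\ref{Phi Lemma} gives $\phi_R(d)\leq \delta^d$. The classical covariance inequality for $\phi$-mixing sequences, $|\mathrm{Cov}(X,Y)|\leq 2\phi_R(d)^{1/2}\|X\|_{L^2}\|Y\|_{L^2}$, together with AM--GM, then yields
\[
\mathrm{Var}\Big(\sum_k S_{A_k}\Big)\leq C_1\sum_k \|S_{A_k}\|_{L^2}^2,\qquad
\mathrm{Var}\Big(\sum_j S_{B_j}\Big)\leq C_1\sum_j \|S_{B_j}\|_{L^2}^2,
\]
for a constant $C_1=C_1(\ve)$ with $C_1(\ve)\to 1$ as $\ve\to 0$ (since $D(\ve)\to\infty$). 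Since $\|S_{B_j}\|_{L^2}\leq 2D(\ve)\|f\|_\infty$, the second bound gives $\mathrm{Var}(\sum_j S_{B_j})=O(L_n)=o(\sig_n^2)$ by the hypothesis $L_n=o(\sig_n^2)$. Combined with $\mathrm{Var}(X+Y)\leq 2\mathrm{Var}(X)+2\mathrm{Var}(Y)$, for $n$ large one gets $\mathrm{Var}(\sum_k S_{A_k})\geq \sig_n^2/4$, hence $\sum_k \|S_{A_k}\|_{L^2}^2\geq \sig_n^2/(4C_1)$. The pigeonhole principle over the $\tilde L_n\leq L_n+1$ summands then produces some index $m=m_n$ with $\|S_{A_m}\|_{L^2}^2\geq \sig_n^2/(4C_1\tilde L_n)$, and choosing $\ve$ small enough so that $4C_1\tilde L_n\leq 16 L_n$ yields $\|S_{A_m}\|_{L^2}\geq \sig_n/(4\sqrt{L_n})$. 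The lower bound on $|A_m|$ follows from $\|S_{A_m}\|_{L^2}\leq |A_m|\|f\|_\infty$.

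For (ii), take $m_n$ as in (i). The hypothesis $L_n=o(\sig_n^2)$ forces $\sig_n/\sqrt{L_n}\to\infty$, so $|A_{m_n}|\to\infty$, and in particular $|A_{m_n}|\geq 4D(\ve)+1$ for $n$ large. Since $A_{m_n}$ is disjoint from every block in $\mathfrak B$ by construction, Lemma~\ref{A lemma} applied to $A=A_{m_n}$ delivers the decomposition \eqref{deccomp} for every $s\in A_{m_n}'$ and every $t\in J$.

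The main technical point---and the only step that requires genuine care---is obtaining the two comparabilities $\mathrm{Var}(\sum_k S_{A_k})\asymp \sum_k \|S_{A_k}\|_{L^2}^2$ and $\mathrm{Var}(\sum_j S_{B_j})\asymp \sum_j \|S_{B_j}\|_{L^2}^2$ with implicit constants close to one; this is a matter of summing the geometric tails of $\phi_R$ over the separations between blocks and choosing $\ve$ small so that $D(\ve)$ makes these tails negligible. All the ingredients for this are in place thanks to Lemma~\ref{Phi Lemma}.
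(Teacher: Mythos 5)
Your argument is essentially the paper's: decompose $S_n$ over the blocks of $\mathfrak A\cup\mathfrak B$, show that the contracting-block contribution and the cross-covariances are $o(\sig_n^2)$, pigeonhole over the at most $L_n+1$ blocks $A_k$ to find $A_m$ with $\|S_{A_m}\|_{L^2}\geq \sig_n/(4\sqrt{L_n})$, and deduce (ii) from Lemma~\ref{A lemma}; your extra observation that $L_n=o(\sig_n^2)$ forces $|A_{m_n}|\to\infty$, so the length hypothesis of that lemma is met for large $n$, is exactly what the paper leaves implicit. The one step that needs repair is the direct application of the classical $\phi$-mixing covariance inequality to the block sums $S_{A_k}$, $S_{B_j}$: each $f_j$ depends on the entire future path $(X_j,X_{j+1},\dots)$, so $S_{A_{k_1}}$ is not measurable with respect to $\cF_{u,v}$ for any finite window around $A_{k_1}$, and the hypothesis of that inequality is not literally satisfied. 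This is fixable in a standard way: either first replace $f_j$ by $\bbE[f_j\mid X_j,\dots,X_{j+r}]$, which costs $O(a^r)$ in $L^\infty$ by Remark~\ref{RemA}, letting the gaps between blocks absorb the error, or argue as the paper does, using the termwise exponential decay of correlations $|\mathrm{Cov}(f_i(X_i,\dots),f_j(X_j,\dots))|\le C\delta^{|i-j|}$ supplied by Theorem~\ref{RPF}, which bounds the totality of cross terms by $O(L_n)=o(\sig_n^2)$ with no measurability issue. Also, your claim $C_1(\ve)\to 1$ is only justified for the $A$-blocks (separated by at least $D(\ve)$); consecutive $B$-blocks are separated only by at least the fixed constant $k_0$, so there $C_1$ is merely bounded --- but boundedness is all your argument actually uses, so the conclusion stands.
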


\begin{proof}
Let $\mathfrak C=\mathfrak A\cup \mathfrak B$. Then 
$
S_n=\sum_{ C\in \mathfrak C}S_{C}=\sum_{k=1}^{L_n}S_{B_k}+\sum_{l=1}^{\tilde L_n}S_{A_l}.
$
Thus
$$ 
\sig_n^2=\|S_n\|_{L^2}^2=\sum_k \|S_{B_k}\|_{L^2}^2+\sum_l \|S_{A_l}\|_{L^2}^2+2\sum_{l_1<l_2} \mathrm{Cov}(S_{C_{l_1}}, S_{C_{l_2}}).
$$ 
Now, since the size of each block $B_j$ is at most $2D(\ve)$ and  $ \|f\|_\infty=\sup_k\|f_k\|_\infty<\infty$,
$$
\sum_{1\leq k\leq L_n}\|S_{B_k}\|_{L^2}^2\leq \left(2D(\ve)\|f\|_\infty\right)^2 L_n
=o(\sig_n^2).
$$
Next,  for every sequence of random variables $(\xi_j)$ such that $|\text{Cov}(\xi_n,\xi_{n+k})|\leq C\del^k$ for $C>0$ and $\del\in(0,1)$ we have the following.  For all $a<b$ and $k,m>0$,
$$
|\text{Cov}(\xi_{a}+...+\xi_{b}, \xi_{b+k}+...+\xi_{b+k+m})|\leq \sum_{j=a}^b\sum_{s=k}^{\infty}|\text{Cov}(\xi_j,\xi_{b+s})|\leq C\sum_{j=a}^b\sum_{s\geq k}\del^{b+s-j}
$$
$$
\leq C_\del\sum_{j=a}^b\del^k\del^{b-j}=C_{\del}\del^k\sum_{j=0}^{b-a}\del^j\leq C_\del(1-\del)^{-1}
$$
where $C_\del=C/(1-\del)$.
Applying this with $\xi_j=f_j(X_j,X_{j+1},...)$ (and using the exponential decay of correlations that follows from Theorem \ref{RPF}), we see that there is a constant $R>0$ such that
for each $l_1$, 
$$\left| \sum_{C_{l_2}: l_2>l_1} \mathrm{Cov}\left(S_{C_{l_1}}, S_{C_{l_2}}\right)\right|\leq R.$$
Therefore
$$
\sig_n^2=\sum_{1\leq k\leq  \tilde L_n}\|S_{A_k}\|_{L^2}^2+o(\sig_n^2)+O(L_n)=
\sum_{1\leq k\leq  \tilde L_n}\|S_{A_k}\|_{L^2}^2+o(\sig_n^2).
$$
Thus, if $n$ is large enough then 
there is at least one index $m$ such that 
$$
\|S_{A_m}\|_{L^2}\geq \frac{\sig_n}{4\sqrt{L_n}}.
$$
Next,   by the triangle inequality
$
 \|S_{A_{m}}\|_{L^2}\leq \
\sup_{j}\|f_j\|_{L^2(\mu_j)}|A_m|\leq \|f\|_\infty |A_m|
$
and so
$
|A_m|\!\!\geq\!\! \frac{\sig_n}{4{\sqrt{L_n}}\|f\|_\infty}.
$
Thus property (i) holds.
Property (ii) follows from  Lemma~\ref{A lemma}.
\end{proof}

   To complete the proof of Proposition \ref{case 2}, we will prove the following result.

  \begin{lemma}\label{Norm Lem}
  There is $\ve_0>0$ such that if the length $\del_1$ of $J$ satisfies $\del_1<\ve_0$ and if $\ve<\ve_0$ then
      $
\int_{J}\|\cL_{t}^{A_{m_n}}\|_*dt=O\left(\sqrt{L_n}\sig_n^{-1}\right).
      $
  \end{lemma}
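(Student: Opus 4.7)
The plan is to bound $\|\cL_t^{A_{m_n}}\|_*$ using the reduction from Lemma \ref{CombLemma}(ii) and integrate via Lemma \ref{Qqad L}. First, since $A_{m_n}\setminus A_{m_n}'$ consists of two sub-blocks whose lengths are bounded independently of $n$, Corollary \ref{Corollary 4.8} together with the sub-multiplicativity of $\|\cdot\|_*$ gives $\|\cL_t^{A_{m_n}}\|_*\le C(\ve)\,\|\cL_t^{A_{m_n}'}\|_*$ uniformly in $t\in J$, so it suffices to prove $\int_J\|\cL_t^{A_{m_n}'}\|_*\,dt=O(\sqrt{L_n}\sig_n^{-1})$. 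For each $s\in A_{m_n}'$ and $t\in J$ the decomposition $tf_s=g_{t,s}+H_s-H_{s+1}\circ T_s+2\pi Z_s$, summed over $s\in A_{m_n}'$ and combined with the fact that $Z_s$ is integer valued, yields
\[
e^{itS_{A_{m_n}'}f}=e^{i(H_{a_n'}-H_{b_n'+1}\circ T_{b_n'})}\,e^{iS_{A_{m_n}'}g_t}.
\]
Iterating the pointwise identity $\cL_{t,s}h=e^{-iH_{s+1}}\tilde\cL_{g_{t,s}}(e^{iH_s}h)$ then gives the factorization $\cL_t^{A_{m_n}'}h=e^{-iH_{b_n'+1}}\,\tilde\cL_{g_t}^{A_{m_n}'}(e^{iH_{a_n'}}h)$, where $\tilde\cL_{g_t}^{A_{m_n}'}$ denotes the complex transfer operator associated to the observable sequence $(g_{t,s})_{s\in A_{m_n}'}$ (the operator $\cA_n^m$ from Proposition \ref{PrSmVarBlock}). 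Since $\sup_s\|H_s\|_{s,\om}<\infty$, multiplication by $e^{\pm iH_s}$ is bounded in $\|\cdot\|_*$ uniformly in $s$, so $\|\cL_t^{A_{m_n}'}\|_*\le C'\,\|\tilde\cL_{g_t}^{A_{m_n}'}\|_*$.

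Next, choose $\ve_0$ small enough that for $\ve,\del_1<\ve_0$ we have $\sup_{t\in J,\,s}\|g_{t,s}\|_{s,\om^r}\le\delta_0$, the threshold in Proposition \ref{PrSmVarBlock}. Since $\om^r$ is itself a modulus of continuity bounded by $1$ and Assumption \ref{Assss} is inherited with $\om$ replaced by $\om^r$, that proposition applies with the new modulus and gives
\[
\|\tilde\cL_{g_t}^{A_{m_n}'}\|_*\le C_2\,e^{-c_2\text{Var}(S_{A_{m_n}'}g_t)}.
\]
One then must compare this variance to $t^2\text{Var}(S_{A_{m_n}'}f)$. From the identity $S_{A_{m_n}'}g_t=tS_{A_{m_n}'}f-(H_{a_n'}-H_{b_n'+1}\circ T_{b_n'})-2\pi\sum_{s\in A_{m_n}'}Z_s$, uniform boundedness of $Z_s$ (which follows from $\sup_s\|f_s\|_\infty$, $\sup_s\|g_{t,s}\|_\infty$ and $\sup_s\|H_s\|_\infty$ being finite together with $|t|\le T$), and the exponential decay of correlations furnished by Theorem \ref{RPF}, one obtains $\text{Var}(S_{A_{m_n}'}g_t)\ge c_3 t^2\text{Var}(S_{A_{m_n}'}f)-C_3$ for constants $c_3,C_3>0$ independent of $t\in J$ and $n$.

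Combining everything with Lemma \ref{Qqad L} (applied to the quadratic $Q(t)=c_2c_3\text{Var}(S_{A_{m_n}'}f)\,t^2$),
\[
\int_J\|\cL_t^{A_{m_n}'}\|_*\,dt\le C''\int_J e^{-c_2c_3t^2\text{Var}(S_{A_{m_n}'}f)}\,dt\le\frac{C'''}{\sqrt{\text{Var}(S_{A_{m_n}'}f)}}.
\]
By Lemma \ref{CombLemma}(i) and the fact that $A_{m_n}\setminus A_{m_n}'$ has length bounded independently of $n$, $\text{Var}(S_{A_{m_n}'}f)\ge c\sig_n^2/L_n$, producing the desired $O(\sqrt{L_n}\sig_n^{-1})$ bound. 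The main obstacle is the variance comparison $\text{Var}(S_{A_{m_n}'}g_t)\gtrsim t^2\text{Var}(S_{A_{m_n}'}f)-O(1)$: a priori the term $2\pi\sum_s Z_s$ could contribute a variance of order $|A_{m_n}'|$, comparable to that of $tS_{A_{m_n}'}f$, so one must exploit the structural fact that $(2\pi/t)Z_s$ is the lattice part of $tf_s$ modulo coboundary and a small remainder in order to control the cross-covariance between $tS_{A_{m_n}'}f$ and $\sum_s Z_s$ and extract the claimed lower bound.
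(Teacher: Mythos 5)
Your first two steps (reducing to $A_{m_n}'$, conjugating away the coboundaries $H_s$ and the lattice terms $2\pi Z_s$, and invoking Proposition \ref{PrSmVarBlock} for the small observables) follow the paper. The genuine gap is exactly where you flag it: the pointwise-in-$t$ bound $\mathrm{Var}(S_{A_{m_n}'}g_t)\geq c_3t^2\mathrm{Var}(S_{A_{m_n}'}f)-C_3$ is not just hard to prove by controlling the cross-covariance with $\sum_s Z_{t,s}$ -- it is false in general. Lemma \ref{Norm Lem} makes no irreducibility assumption and is also used on intervals $J$ where $(f_j)$ may be reducible (e.g.\ integer-valued $f_j$ with $2\pi\in J$, or $t^*\in J$ with $t^*f_j$ reducible to a $2\pi\bbZ$-valued sequence); at such $t^*$ one has $\mathrm{Var}(S_{A_{m_n}'}g_{t^*})=O(1)$ while $t^{*2}\mathrm{Var}(S_{A_{m_n}'}f)\gtrsim \del^2\sig_n^2/L_n\to\infty$. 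Moreover, since $|t|\geq\del$ on $J$, your inequality would give $\|\cL_t^{A_{m_n}'}\|_*\leq Ce^{-c\del^2\sig_n^2/L_n}$ uniformly on $J$, i.e.\ an exponentially small bound that cannot hold without irreducibility and is much stronger than the claimed $O(\sqrt{L_n}\sig_n^{-1})$; this is a sign the route cannot work as stated. A further obstruction to any patch along these lines is that the decomposition of $tf_s$ is applied separately at each $t$, so $g_{t,s}$ and $Z_{t,s}$ vary with $t$ in an uncontrolled way, which also makes $t\mapsto\mathrm{Var}(S_{A_{m_n}'}g_t)$ unsuitable as the quadratic fed into Lemma \ref{Qqad L}.

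The paper avoids this entirely by applying the reduction only at the \emph{center} $t_0$ of $J$ and writing $t=t_0+h$, $|h|\leq\del_1/2$: then
$tf_s=(g_{t_0,s}+hf_s)+t_0H_s-t_0H_{s+1}\circ T_s+2\pi Z_{t_0,s}$, so after conjugation one must bound $\cA_t u=\cL_{a_n'}^{b_n'-a_n'}\bigl(e^{iS_{a_n',b_n'}(g+hf)}u\bigr)$ with $g=g_{t_0,\cdot}$ \emph{fixed}. Proposition \ref{PrSmVarBlock} (applicable once $C(\ve)+c_0\del_1+\del_1\sup_j\|f_j\|<\del_0$) gives $\|\cA_t\|_*\leq C_0e^{-cV_n(h)}$ with $V_n(h)=\|S_{a_n',b_n'}(g+hf)\|_{L^2}^2$, which is a \emph{nonnegative quadratic in the integration variable $h$} whose leading coefficient is $\mathrm{Var}(S_{A_{m_n}'}f)\gtrsim\sig_n^2/L_n$ by Lemma \ref{CombLemma}(i). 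Lemma \ref{Qqad L} then yields $\int_{-\del_1/2}^{\del_1/2}e^{-cV_n(h)}dh=O\bigl(\mathrm{Var}(S_{A_{m_n}'}f)^{-1/2}\bigr)=O(\sqrt{L_n}\sig_n^{-1})$ using only the leading coefficient and $\min V_n\geq0$ -- no lower bound on the value of $V_n$ at its minimum (hence no irreducibility) is needed. You should replace your last two paragraphs with this fixed-$t_0$ perturbation argument; the rest of your write-up then goes through.
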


\begin{proof}[Proof of Lemma \ref{Norm Lem}]
Let $A_{m_n}'$ be defined in Lemma \ref{CombLemma}. 
Then we can write 
$$
A_{m_n}=U_n\cup A_{m_n}'\cup V_n
$$
for blocks $U_n$ and $V_n$ such that  $U_n, A_{m_n}', V_n$ are disjoint, $U_n$ is to the left of $A_{m_n}'$ and $V_n$ is to its right. Moreover, $V_n$ is of size $2k_0+1$ and $U_n$ is of size $2k_0+L(\ve)+k_2(\ve)$. Thus by \eqref{Norm bound},
$$
\sup_{t\in J}
\max\left(\|\cL_t^{U_n}\|_*,\|\cL_t^{V_n}\|_* \right)\leq 1.
$$
Since 
$
\cL_{t}^{A_{m_n}}=\cL_{t}^{V_n}\circ\cL_{t}^{A_{m_n}'}\circ\cL_t^{U_n}
$
we conclude that 
$
\|\cL_{t}^{A_{m_n}}\|_*\leq \|\cL_{t}^{A_{m_n}'}\|_*.
$
Thus, its enough to show that 
\begin{equation}\label{suff A prime}
\int_{J}\|\cL_{t}^{A_{m_n}'}\|_*dt=O\left(\sqrt{L_n}\sig_n^{-1}\right).
\end{equation}

Next, let us write $A_{m_n}'=\{a'_n,a'_{n}+1,...,b'_n\}$. 
Then,  by Lemma \ref{CombLemma}(ii), for all $s\in A'_{m_n}$ and all $t\in J$ we can write 
$
tf_s=g_{t,s}+H_s-H_{s+1}\circ T_s+2\pi Z_{t,s}
$
where $\|g_{t,s}\|_{\al} \leq C(\ve)+c_0\del_1$, for some $C(\ve)$ such that $C(\ve)\to 0$ as $\ve\to 0$, and $c_0$ is a constant. 
In particular, if $t_0$ is the center of $J$ and $t=t_0+h\in J$, then for all $s\in A_{m_n}'$ we have 
$$
tf_s=t_0f_s+hf_s=(g_s+hf_s)+t_0H_s-t_0H_{s+1}+2\pi Z_s
$$
where $g_s=g_{t_0,s}$ and $Z_s=Z_{t_0,s}$.
Therefore, for any function $u$ we have
$$
\cL_{t}^{A_{m_n}'}u=e^{-it_0H_{b'_{n}}}\cL_{a'_{n}}^{b'_{n}-a'_{n}}(e^{iS_{a'_{n},b'_{n}}g+it_0H_{a'_{n}}+
ihS_{a'_{n},b'_{n}}f}u).
$$
Let 
$ 
\cA_{t}(u):=\cL_{a'_{n}}^{b'_{n}-a'_{n}}(e^{iS_{a'_{n},b'_{n}}g+ihS_{a'_{n},b'_{n}}f}u).
$
Then since $ \sup_j\|H_j\|_{\al}<\infty$ there is a constant $C>0$ such that 
$$
\|\cL_{t}^{A_{m_n}'}\|_{*}\leq C\|\cA_t\|_*.
$$
 Now,  by Proposition \ref{PrSmVarBlock} 
there exist
  constants $C_0>0$ and $c>0$ such that  if  $\del_1$ (and hence $|h|$) and $\ve$ are small enough
then 
$$\|\cA_t\|_*\leq C_0 e^{-cV_{n}(h)}$$
where $V_n(h)=\|S_{a_n',b_n'}(g+hf)\|_{L^2}^2$.
Thus
$$
\int_{J}\|\cL_{t}^{A_{m_n}'}\|_{*}dt=\int_{-\del_1/2}^{ \del_1/2}
\|\cL_{t_0+h}^{A_{m_n}'}\|_{*}dh\leq CC_0''\int_{-\del_1/2}^{ \del_1/2}
e^{-cV_{n}(h)}dh.
$$
Applying Lemma  \ref{Qqad L} with $Q(h)\!=\!V_n(h)\!=\!\|S_{a_{n},b_{n}}(g+hf)
\|_{L^2}^2$ and 
using Lemma~\ref{CombLemma}(i) we conclude that there  are constant $C',C''>0$ such that
$$
\int_{-\del_1/2}^{\del_1/2}\|\cL_{0,t_0+h}^{A_{m_n}'}\|_{*}dh\leq C'\left(\text{Var}(S_{a'_n,b'_n})\right)^{-1/2}\leq C''L_n^{{1/2}}\sig_n^{-1}
$$
where we have used that the quadratic form $ Q(h)$ is nonnegative, and \eqref{suff A prime} follows.
\end{proof}

\begin{proof}[Proof of Proposition \ref{case 2}]
By the submultiplicativity of the norm and Corollary \ref{Corollary 4.8}, 
$$ \|\cL_{0,t}^{n}\|_*\leq \left(\prod_{k=1}^{\tilde L_n} \|\cL_t^{A_k}\|_*\right)\left(\prod_{j=1}^{L_n} \|\cL_t^{B_j}\|_*\right)
\leq (1-\te(\ve))^{L_n} \|\cL_t^{A_{m_n}}\|_*.
$$
  Thus Lemma \ref{Norm Lem} gives
  $
\int_{J}\|\cL_{0,t}^{n}\|dt\leq (1-\eta(\ve))^{L_n} {\sqrt{L_n}}\sig_n^{-1}
  $
  which is indeed $o(\sig_n^{-1})$ if $L_n$  diverges to infinity. 
\end{proof}

\subsection{Small number of contracting blocks}
\label{SSFewBlocks}
The third and last case we need to cover to complete the proof of \eqref{Suff J} is when $L_n$ is bounded.
\begin{proposition}\label{case 3}
 If $L_n$ is bounded then either 
\vskip0.1cm  
(i) $(f_j)$ is reducible  and, moreover, one can decompose
$$ f_{j}=g_{j}+H_{j-1}-H_{j}\circ T_{j-1}+(2\pi/t) Z_{t,j} $$
 with $t\in J$,\;\; $Z_{t,j}$ integer valued and  $g_j\circ T_0^j$ is a reverse martingale difference satisfying
 $ \sum_j \mathrm{Var}(g_j)<\infty$;
 or
 \vskip0.1cm  
(ii) 
$
\int_{J}\|\cL_{0,t}^n\|_*dt=o(\sig_n^{-1}).
$
\end{proposition}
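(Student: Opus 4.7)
The plan is to exploit that when $L_n$ is bounded by a constant $M$, after a fixed initial region no further contracting blocks of the prescribed size are produced, so there is a non-contracting block $A_{m_n}$ whose interior $A_{m_n}'$ has size $\Omega(n)$ and satisfies $\|S_{A_{m_n}'}f\|_{L^2}^2=\Omega(\sig_n^2)$ (by pigeonholing the variance identity $\sig_n^2=\sum_l\|S_{A_l}\|_{L^2}^2+o(\sig_n^2)+O(L_n)$ from the proof of Lemma \ref{CombLemma}(i), using that $\tilde L_n\leq M+1$). Applying Corollary \ref{CutLemm} to this block yields, for every $s\in A_{m_n}'$ and every $t\in J$,
$$
tf_s \;=\; tg_{t,s} + t\bigl(H_s-H_{s+1}\circ T_s\bigr) + 2\pi Z_{t,s},
$$
with $\|g_{t,s}\|_{s,\om^r}=O(\delta_1)+c_r(L(\ve))$ and coboundary functions $H_s$ independent of $t$ and of the block. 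Fixing $t_0$ at the center of $J$ and writing $g_s=g_{t_0,s}$, $Z_s=Z_{t_0,s}$, for $t=t_0+h$ with $|h|\leq \delta_1/2$ the integrality of $Z_s$ yields
$$
e^{itS_{A_{m_n}'}f}\;=\;e^{iS_{A_{m_n}'}(t_0 g + h f)}\cdot e^{it_0\bigl(H_{a_n'}-H_{b_n'+1}\circ T_{b_n'}\bigr)}.
$$

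Since the boundary factor is uniformly bounded in the norm $\|\cdot\|_{j,*}$, Proposition \ref{PrSmVarBlock} applied to the functions $v_s=t_0 g_s+hf_s$ produces $\|\cL_t^{A_{m_n}'}\|_*\leq Ce^{-cV_n(h)}$, where $V_n(h)=\operatorname{Var}(S_{A_{m_n}'}(t_0 g + hf))$ is a quadratic function of $h$ with leading coefficient $\operatorname{Var}(S_{A_{m_n}'}f)=\Omega(\sig_n^2)$. Combining Lemma \ref{Qqad L} with the bound $\|\cL_t^A\|_*\leq 1$ on the other non-contracting blocks (Corollary \ref{Corollary 4.8}) and $\|\cL_t^B\|_*\leq 1-\theta(\ve)$ on the contracting ones (all finite in number and of bounded size) yields
$$
\int_J \|\cL_{0,t}^n\|_*\,dt \;\leq\; \frac{C'}{\sig_n}\,\exp\Bigl(-c\min_{|h|\leq \delta_1/2}V_n(h)\Bigr).
$$
If $\min_{|h|\leq \delta_1/2}V_n(h)\to\infty$ along $n$, case (ii) follows; otherwise there is a subsequence $n_k$ and $h_k\in[-\delta_1/2,\delta_1/2]$ with $\sup_k V_{n_k}(h_k)<\infty$.

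For this remaining case I would pass to a sub-subsequence along which $h_k\to h^\star$ and observe that since $V_n(h)$ grows essentially as $|A_{m_n}'|$ times the asymptotic variance rate $L(h)$ of $(t_0 g_s + h f_s)$ (a quadratic function of $h$), the bound $V_{n_k}(h_k)\leq C$ with $|A_{m_n}'|\to\infty$ forces $L(h_k)\to 0$ and hence $L(h^\star)=0$ by continuity. By the dichotomy of Theorem \ref{Var them}, $L(h^\star)=0$ implies $\sup_n\operatorname{Var}(S_n(t_0 g + h^\star f))<\infty$, so Theorem \ref{Var them} decomposes $\phi_s:=t_0 g_s + h^\star f_s$ as $\bbE[\phi_s]+M_s+(u_{s+1}\circ T_s-u_s)$ with $M_s$ a reverse martingale difference and $\sum_s\operatorname{Var}(M_s)<\infty$. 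Substituting $t_0 g_s = t_0 f_s - t_0(H_s-H_{s+1}\circ T_s) - 2\pi Z_s$ and rearranging with $t^\star:=t_0+h^\star$ yields
$$
t^\star f_s \;=\; \bbE[\phi_s] + M_s + \bigl(\tilde H_s - \tilde H_{s+1}\circ T_s\bigr) + 2\pi Z_s,
$$
which after dividing by $t^\star$ is exactly the decomposition of case (i). The main obstacle is the passage from the subsequence bound $V_{n_k}(h_k)\leq C$ to global boundedness of $\operatorname{Var}(S_n\phi)$ at the limit $h^\star$ — naive continuity in $h$ fails since $V_n(h)$ has coefficients of order $\sig_n^2$ — and the argument hinges on the quadratic structure of $L(h)$ together with the sharp dichotomy of Theorem \ref{Var them}.
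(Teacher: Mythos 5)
There is a genuine gap in the final step, and it is exactly the point you flag yourself as ``the main obstacle''. Your argument for case (i) relies on an ``asymptotic variance rate $L(h)$'' with $V_n(h)\approx |A_{m_n}'|\,L(h)$, but no such object exists in this setting: the whole point of the paper is that the sequence is non-stationary and no linear (or any prescribed) growth of $\mathrm{Var}$ is assumed, so the per-index rate is undefined and the implication ``$V_{n_k}(h_k)\le C$ and $|A_{m_{n_k}}'|\to\infty$ forces $L(h_k)\to0$, hence $L(h^\star)=0$, hence $\sup_n\mathrm{Var}(S_n(t_0g+h^\star f))<\infty$'' has no justification. A secondary but related problem is that your decomposition and your variance bounds live on blocks $A_{m_n}'$ chosen by pigeonhole, which may move with $n$; Theorem \ref{Var them} (and the reducibility statement in (i)) requires a single sequence of functions with $\liminf_n\mathrm{Var}(S_n(t_0g+h^\star f))<\infty$ over initial segments, and boundedness of the variance over a drifting family of sub-blocks does not give that.

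The paper's proof closes this gap differently. Since $L_n$ is bounded, it works with the single terminal block $A=\{N_0,\dots,n-1\}$ following the last contracting block, so that $\tilde S_n=S_{N(\ve),n-N(\ve)}$ is the full sum up to a bounded correction and the decomposition \eqref{Redd} holds for \emph{all} $j\ge N(\ve)$, with one fixed family $(g_j,\tilde H_j,\tilde Z_j)$. Then, in the case where $m_n=\min Q_n$ stays bounded along a subsequence with minimizers $h_{n_j}\to h_0$ in $[-\del_1,\del_1]$, it bounds $Q_n(h_0)$ \emph{uniformly in $n$} by the elementary comparison
$$
\mathrm{Var}(\tilde S_n(g+h_{n_j}f))=\mathrm{Var}(\tilde S_{n_j}(g+h_{n_j}f))-\mathrm{Var}(S_{n,n_j-n}(g+h_{n_j}f))-2\,\mathrm{Cov}\bigl(\tilde S_n(g+h_{n_j}f),S_{n,n_j-n}(g+h_{n_j}f)\bigr),
$$
using nonnegativity of the middle variance and the uniform bound on the prefix--suffix covariance coming from exponential decay of correlations (Theorem \ref{RPF}), together with continuity of the fixed-$n$ quadratic $Q_n$ at $h_0$. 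This yields $\sup_n\mathrm{Var}(\tilde S_n(g+h_0f))\le \bar Q+2C$, and only then does Theorem \ref{Var them} give the reverse-martingale plus coboundary decomposition of $g_j+h_0f_j$, which combined with \eqref{Redd} shows $(t_0+h_0)f_j$ is reducible to a $2\pi\bbZ$-valued sequence. If you replace your $L(h)$ step by this prefix-versus-whole variance comparison (and take the block to be the terminal one rather than the max-variance one), your outline becomes essentially the paper's proof; as written, the reducibility half of the dichotomy is not established.
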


\begin{proof}

Suppose that $L_n$ is bounded, and let $N_0$ be the right end point of the last contracting block $B_{L_n}$. If $L_n=0$ we set $N_0=-1$. Set $N(\ve)=N_0+k_0+L(\ve)+k_2(\ve)+1$.
Then, for every $t\in J$ and $n\geq N(\ve)$ we have
$$
\|\cL_{0,t}^n\|_*=\|\cL_{N(\ve),t}^{n-N(\ve)}\circ\cL_{0,t}^{N(\ve)}\|_*\leq \|\cL_{N(\ve),t}^{n-N(\ve)}\|_*\|\cL_{0,t}^{N(\ve)}\|_{*}\leq \|\cL_{N(\ve),t}^{n-N(\ve)}\|_*
$$
where in the second inequality we have used \eqref{Norm bound}.

Now, by Lemma \ref{A lemma} applied with $A=\{N_0,...,n-1\}$ we see that  
there  are functions  $g_j,\tilde H_j, \tilde Z_j$ such that for all $j\geq N(\ve)$ we have
\begin{equation}\label{Redd}
t_0f_j=g_j+\tilde H_j-\tilde H_{j+1}\circ T_j+2\pi \tilde Z_j
\end{equation}
 where $t_0$ is the center of $J$ and $ \sup_j\|g_j\|_{\alpha}\leq C(\ve)$, with $C(\ve)\to 0$ as $\ve\to 0$. Hence, like in the previous cases, if we take $\ve$ and $\del_1$ small enough, then 
 it is enough  to bound the norm of the  operator $\cA_{n,t}$ given by
$$ 
\cA_{n,t}u=\cA_{n,t_0+h}u=\cL_{N(\ve)}^{n-N(\ve)}(e^{iS_{N(\ve),n-N(\ve)}(g+hf)}),
\text{ where } h=t-t_0. 
$$
Let $\del_0$  be the constant from Proposition \ref{PrSmVarBlock}.
Take $\ve$ and $\del_1$ small enough so that $ \sup_{|h|\leq\del_1}\sup_j\|g_j+hf_j\|_{\alpha}<\del_0$. 
 Applying Proposition \ref{PrSmVarBlock}  we have 
$$
\|\cA_{n,t_0+h}\|_*\leq C e^{-cQ_n(h)}$$
for some constant $c>0$
where, as before 
$$
 Q_n(h)=\text{Var}(\tilde S_n f)h^2+2h\text{Cov}(\tilde S_nf, \tilde S_n g)+\text{Var}(\tilde S_ng)
$$
where $\tilde S_nf=S_{N(\ve),n-N(\ve)}f$ and $\tilde S_n g=S_{N(\ve),n-N(\ve)}g$. Then $\tilde\sig_n:=\|\tilde S_nf\|_{L^2}\geq \sig_n-CN(\ve)$ for some constant $C$.
Thus, by Lemma \ref{Qqad L} there is a constant $A>0$ such that
\begin{equation}\label{l64}
    \int_{-\del_1/2}^{\del_1/2}\|\cA_{n,t_0+h}\|_*\leq 
A\sig_n^{-1} \exp\left(-c\min_{[-\delta_1/2, \delta_1/2]} Q_n(h)\right).
\end{equation}
Note that
$$
m_n=\min Q_n=\text{Var}(\tilde S_n(g+h_nf)) \geq 0
$$
where $h_n:=\mathrm{argmin 
 }Q_n=-\frac{\mathfrak b_n}{2\mathfrak a_n}$.
Thus, if $m_n\!\!\to\!\!\infty$ then \eqref{Suff J} holds. Let us suppose that 
$ \liminf_{n\to\infty} m_n<\infty.$ 
We claim that in this case either \eqref{Suff J} holds or $(f_j)$ is reducible to a lattice valued sequence of functions.
Before proving the claim, let us simplify the notation and write
$$
Q_n(h)=\sig_n^2(h-h_n)^2+m_n=\mathfrak a_n h^2+\mathfrak b_n h+\mathfrak c_n
$$
where $\mathfrak a_n=\tilde\sig_n^2.$ Thus $h_n=\mathrm{argmin 
 }Q_n=-\frac{\mathfrak b_n}{2\mathfrak a_n}$. 

We now consider two cases.
\vskip0.1cm
(1) For any subsequence  with $ \lim_{j\to\infty} m_{n_j}<\infty$ we have
 $|h_{n_j}|\geq \del_1$. Then 
 $$ \min_{[-\delta_1/2, \delta_1/2]} Q_{n_j}(h)\geq \frac{\mathfrak a_{n_j}\delta_1^2}{4}
  =\frac{
  \tilde\sig_{n_j}^2\delta_1^2}{4}
 $$
and so \eqref{Suff J} holds  by \eqref{l64}.
\vskip0.1cm
(2) It remains to consider the case when there is a subsequence 
$n_j$ such that $|h_{n_j}|\leq \delta_1$ and $\bar Q:=\lim_{j\to\infty} m_{n_j}<\infty.$
By taking further subsequence if necessary we may assume that the limit
$\lim_{j\to\infty} h_{n_j}= h_0$ exists. Then for all $n$,
$$ Q_n(h_0)=\lim_{j\to\infty} Q_{n}(h_{n_j})=
\lim_{j\to\infty} \text{Var}(\tilde S_{n}(g+h_{n_j} f))
$$
$$
=\lim_{j\to\infty} \left(\text{Var}(\tilde S_{n_j}(g+h_{n_j} f))-\text{Var}(S_{n, n_j-n}f)
-2\text{Cov}(\tilde S_n (g+h_{n_j} f), S_{n, n_j-n}(g+h_{n_j} f))\right)
$$$$
\leq \lim_{j\to\infty} m_{n_j}-2\lim\inf_{j\to\infty} \text{Cov}(\tilde S_n (g+h_{n_j} f), S_{n, n_j-n}(g+h_{n_j} f))\leq \bar Q+2C
$$ 
for some constant $C>0$, where the last inequality uses that $\left|\text{Cov}(f_j, f_{j+k}\circ T_j^k)\right|\leq c_0\del_0^k$ for some constants $c_0>0$ and $\del_0\in(0,1)$.
Since $Q_n(h_0)\leq \bar Q+2C$ for all $n$ we obtain
$$ \limsup_{n\to\infty} \text{Var}(\tilde S_n(g+h_0 f))\leq \bar Q+2C.$$
Hence $ \sup_n\text{Var}(S_n(g+h_0 f))<\infty$.
Thus, by Theorem \ref{Var them}   we can write
$$
h_{0}f_j+g_j=\mu_j(h_{0}f_j+g_j)+M_j+u_{j+1}\circ T_j-u_j
$$
with functions $u_j$ and  $M_j$ such that $ \sup_j\|u_j\|_{j,\om^r}, 0<r<1$ and $ \sup_{j}\|M_j\|_{j,\om^r}$ are finite, $M_j(X_j,X_{j+1},...)$ is a reverse martingale difference with 
$ \sum_j\text{Var}(M_j)<\infty$. Combining this with \eqref{Redd} we conclude that 
$(t_0+h_0)(f_j)$ is reducible to a $2\pi \bbZ$ valued sequence of functions,
and the proof of Proposition \ref{case 3} is complete.
\end{proof}

\subsection{Proof of the main results in the irreducible case}
\label{Edge1}

 Combining the results of \S\S \ref{SSLrgeCB}--\ref{SSFewBlocks} we obtain \eqref{Suff} completing the proof of  Theorem \ref{LLT1}. \vskip1mm

To prove Theorem \ref{LLT Latt} we note that the analysis 
of \S\S \ref{SSLrgeCB}--\ref{SSFewBlocks}  (in particular the proof of 
Proposition \ref{case 3}) also shows that if $\cJ$ is an interval such that 
 $\int_{\cJ} \|\cL_{0, t}^n \|_{*}dt \neq 0$ then $(f_j)$ is reducible to $h\bbZ$
valued sequence for some $h$ with $\frac{2\pi}{h} \in \cJ.$
By the assumption of Theorem \ref{LLT Latt} such a reduction is impossible for $|h|>1$
(see Theorem \ref{Reduce thm}) and therefore \eqref{SuffLat} holds, implying Theorem 
\ref{LLT Latt}.
\vskip1mm

 Theorem \ref{ThEdge1} follows by the estimates at the end of \cite[Section 6.6.1]{MarShif1}, the estimate \eqref{Suff} and \cite[Proposition 25]{DolgHaf PTRF 1} with $r=1$.

\section{Local limit theorem in the reducible case}\label{Red sec}
Like in \cite[Section 8]{DolgHaf LLT} we can prove the following results (the proof is almost identical to \cite[Section 8]{DolgHaf LLT} so it is omitted).

Suppose that $(f_j)$ is a reducible sequence such that $\sig_n\to\infty$. 
Let
$a=a(f)$ be the largest positive number such that $f$ is reducible to an $a\bbZ$ valued sequence
 (such $a$ exists by Theorem \ref{Reduce thm}).  Let $\del=2\pi/a$ and  write 
\begin{equation}
\label{RedF}
\del f_j=\del\mu_j(f_j)+M_j+g_j-g_{j+1}\circ T_j+2\pi Z_j
\end{equation}
with $(Z_j)$ being an integer valued irreducible sequence, $g_j,M_j$ are functions 
such that
$ \sup_{j}\|g_j\|_\al<\infty$, $ \sup_j\|M_j\|_\al<\infty$, and $(M_j\circ T_0^j)$ is a reverse martingale difference with respect to the reverse filtration $(T_0^j)^{-1}\cB_j$ on the probability space $(X_0,\cB_0, \ka_0)$. 
Moreover, we have 
$
\sum_{j}\text{Var}(M_j)<\infty.
$
Then by the martingale convergence theorem the limit $ \textbf{M}=\lim_{n\to\infty}S_n M$ exists. Set $\textbf{A}=\textbf{M}+g_0$. 
\begin{theorem}\label{LLT RED}
If \eqref{RedF} holds then
for every continuous function $\phi:\bbR\to\bbR$ with compact support, 
$$
\sup_{u\in a\bbZ}\left|\sqrt{2\pi}{ \sig_n}\bbE_{\ka_0}[\phi(S_n\!-\!u)]
\!\!-\!\!
\left(\!\! a\sum_{k}\int_{X_n}\!\!\bbE_{\ka_0}[\phi(k a+\textbf{A}\!-\!g_n(x))]
d\mu_n(x)\!\!\right)\!e^{-\frac{(u-\bbE[S_n])^2}{2\sig_n^2}}\right|
\!\!=\!\! o(1).
$$
\end{theorem}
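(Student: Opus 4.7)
The plan is to follow the Fourier inversion strategy of \cite[Section 8]{DolgHaf LLT}, adapted to the present Markov shift setup. Starting from the reduction \eqref{RedF}, the first step is to write
\[
S_nf=\bbE[S_nf]+aS_nZ+R_n,\qquad R_n=\delta^{-1}\bigl(S_nM+g_0-g_n\circ T_0^n\bigr),
\]
and to represent
\[
\bbE_{\ka_0}[\phi(S_nf-u)]=\frac{1}{2\pi}\int_\bbR\hat\phi(t)e^{-itu}\bbE_{\ka_0}[e^{itS_nf}]\,dt
\]
via Fourier inversion. Because $aS_nZ\in a\bbZ$ and $a\delta=2\pi$, the factor $e^{itaS_nZ}$ appearing in the decomposition $e^{itS_nf}=e^{it\bbE[S_nf]}e^{itaS_nZ}e^{itR_n}$ is $\delta$-periodic in $t$. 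This motivates splitting $\bbR$ into intervals $I_k=[(k-\tfrac12)\delta,(k+\tfrac12)\delta]$ and substituting $t=k\delta+s$ on each $I_k$.

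On $I_k$, since $u=ak_u\in a\bbZ$, one has $e^{-i(k\delta+s)u}=e^{-isak_u}e^{-2\pi ikk_u}=e^{-isak_u}$ and $e^{ik\delta\cdot aS_nZ}=e^{2\pi ikS_nZ}=1$, so the $k$-dependence is isolated in the twist $e^{ik\delta R_n}=e^{ik(S_nM+g_0-g_n\circ T_0^n)}$. The $s$-integral over $|s|\le\delta/2$ is then analyzed as in the proof of Theorem \ref{LLT Latt} applied to the irreducible integer valued sequence $(Z_j)$: for small $|s|$, the Gaussian estimate \eqref{Small t} (combined with Corollary \ref{Cor1} and Proposition \ref{PrSmVarBlock}) produces the main term of order $\sigma_n^{-1}e^{-(u-\bbE[S_n])^2/(2\sigma_n^2)}$, while for $|s|$ bounded away from $0$ the $o(\sigma_n^{-1})$ estimates of Section \ref{SSKeyPropZN} apply.

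To identify the limit expression, the plan is to invoke the martingale convergence $S_nM\to\textbf{M}$ a.s.\ and in $L^p$, so that $R_n$ is approximated by $\delta^{-1}(\textbf{A}-g_n\circ T_0^n)$, and to use the Markov property, which identifies the $\ka_0$-law of $(X_n,X_{n+1},\dots)$ with $\ka_n$. A Poisson summation identity of the form
\[
\sum_k\hat\phi(k\delta)e^{ik\delta y}=a\sum_m\phi(y+ma)
\]
then converts the sum over $k$ into the series $a\sum_m\phi(ma+\textbf{A}-g_n(x))$ appearing in the theorem, after integration against $d\mu_n(x)$ and multiplication by the Gaussian factor.

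The main obstacle is controlling the infinite sum over $k$ uniformly in $u$: the smoothness of $\phi$ (manifested in the Fourier decay of $\hat\phi$) must absorb the oscillations of the twist $e^{ik\delta R_n}$, and the replacement of $S_nM$ by $\textbf{M}$ must be quantitative at the scale $\sigma_n^{-1}$. As in \cite[Section 8]{DolgHaf LLT}, these quantitative bounds come from the Lasota--Yorke inequality (Lemma \ref{Lasota Yorke}) together with the uniform spectral expansion of Corollary \ref{Cor1}; verifying that each piece of the argument transfers to the Markov shift framework is precisely the routine work the paper defers.
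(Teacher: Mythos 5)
Your outline follows exactly the route the paper intends: the paper gives no independent proof of Theorem \ref{LLT RED}, deferring to \cite[Section 8]{DolgHaf LLT}, and your steps --- the decomposition coming from \eqref{RedF}, Fourier inversion with the $\delta=2\pi/a$-periodic splitting in $t$, decay of $\|\cL_{0,t}^n\|_*$ away from $\delta\bbZ$ via the irreducible-case estimates together with the maximality of $a$ (Theorem \ref{Reduce thm}), the perturbative expansion near the points $k\delta$ using Corollary \ref{Cor1} and Proposition \ref{PrSmVarBlock}, the martingale convergence $S_nM\to\textbf{M}$, and Poisson summation to identify the limiting series --- are precisely the ingredients of that argument, and they match the two ingredients the paper itself singles out (decay at points off $(2\pi/a)\bbZ$ and expansion around points of $(2\pi/a)\bbZ$). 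So the proposal is correct and takes essentially the same approach as the paper.
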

We refer to \cite[Remark 7.2]{DolgHaf LLT} for several comments and motivation concerning Theorem \ref{LLT RED}.

\section{Irreducibly for connected spaces}\label{Irr Sec}

Here we prove Theorem \ref{LLT2}. As we have explained before, it suffices to prove \eqref{Suff}. Hence Theorem \ref{LLT2} follows from the
estimate below, whose proof proceeds exactly like the proof of \cite[Proposition 9.7]{DolgHaf LLT}.

\begin{proposition}\label{ExpDecP stretched}
 If all the spaces $\cX_j$ are  connected 
then for every $0<\del<T$ there are constants $c,C>0$ such that for all $n$,
\begin{equation}\label{expp}
\sup_{\del\leq |t|\leq T}\|\cL_{0,t}^n\|_{*}\leq Ce^{-c\sig_n}.
\end{equation}
Moreover, if $\sig_n\to \infty$ then $(f_j)$ is irreducible.
\end{proposition}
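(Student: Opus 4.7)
My strategy is to adapt the block-decomposition argument of \S\S\ref{SSLrgeCB}--\ref{SSFewBlocks}, exploiting the key new input that connectedness provides via Remark~\ref{Rem conn}.

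First I partition $\{0,1,\ldots,n-1\}$ into a maximal family of short contracting blocks $B_1,\ldots,B_{L_n}$ (in the sense of \S\ref{SSCBDef}) together with complementary non-contracting intervals $A_1,\ldots,A_{\tilde L_n}$. Fix a small sub-interval $J\subset\{\delta\le|t|\le T\}$ of length $\delta_1$, centered at some $t_0$. On each sufficiently long non-contracting block $A_k$ Corollary~\ref{CutLemm} gives an interior $A_k'$ and a decomposition
$$
tf_s = g_{t,s} + H_s - H_{s+1}\circ T_s + 2\pi Z_{t,s},\qquad s\in A_k',\ t\in J,
$$
with $\sup_s\|g_{t,s}\|_{s,\omega^r}=O(\delta_1)+c_r(L(\varepsilon))$ and $Z_{t,s}\in\bbZ$. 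Since each $\cX_k$ is connected, the product $\cX_{s+1,\infty}$ is connected, and $Z_{t,s}$ is continuous (being the difference of continuous functions divided by $2\pi$). Remark~\ref{Rem conn} therefore promotes $Z_{t,s}$ to a constant integer, which contributes only to the mean of $S_{A_k'}f$ and not to its variance. Cauchy--Schwarz together with $\sup\|H_s\|_\infty<\infty$ then yields
$$
\text{Var}(S_{A_k'}g_{t,\cdot})\ \ge\ c\, t^2\,\text{Var}(S_{A_k'}f)-O(1)
$$
whenever $\text{Var}(S_{A_k'}f)$ exceeds an absolute threshold. Plugging this into Proposition~\ref{PrSmVarBlock} and using \eqref{Norm bound} on the short boundary pieces of $A_k$ gives
$$
\|\cL_t^{A_k}\|_*\ \le\ C_1\exp\bigl(-c_1 t^2\,\text{Var}(S_{A_k'}f)\bigr).
$$

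Combining this with the contracting estimate $\|\cL_t^{B_j}\|_*\le 1-\theta(\varepsilon)$ from \eqref{DefContrBl} by submultiplicativity, and using the variance accounting
$$
\sigma_n^2 \ =\ \sum_{k=1}^{\tilde L_n}\text{Var}(S_{A_k}f)\ +\ O(L_n)
$$
(justified by the bounded contracting-block sizes and the exponential decay of correlations coming from Theorem~\ref{RPF}), I obtain
$$
\sup_{t\in J}\|\cL_{0,t}^n\|_*\ \le\ (1-\theta(\varepsilon))^{L_n}\prod_{k=1}^{\tilde L_n}C_1 e^{-c_1\delta^2\text{Var}(S_{A_k'}f)}.
$$
To extract $e^{-c\sigma_n}$ I split the non-contracting blocks into those with $\text{Var}(S_{A_k'}f)\le V_0$ (using the trivial bound $\|\cL_t^{A_k}\|_*\le 1$) and those with $\text{Var}(S_{A_k'}f)>V_0$ (absorbing the $C_1$ into the exponential). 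Considering the regimes $L_n\lesssim\sigma_n$ and $L_n\gtrsim\sigma_n$ separately, and finally covering $\{\delta\le|t|\le T\}$ by finitely many such sub-intervals $J$, yields \eqref{expp}.

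For the irreducibility statement, suppose toward contradiction that $(f_j)$ is reducible with $\sigma_n\to\infty$. By Definition~\ref{Irr Def} and Theorem~\ref{Reduce thm} one can write $f_j=\kappa_j(f_j)+H_j+hZ_j$ with $\sup_j\|H_j\|_{j,\omega}<\infty$, $(S_nH)$ tight, and $Z_j:\cX_{j,\infty}\to\bbZ$. Since $f_j$ and $H_j$ have finite $\omega_j$-norm, so does $Z_j$; in particular $Z_j$ is continuous, and on the connected space $\cX_{j,\infty}$ it must be constant. Then $H_j=f_j-\kappa_j(f_j)-hZ_j$ gives $\text{Var}(S_nH)=\sigma_n^2\to\infty$, contradicting (via Theorem~\ref{Var them}) the tightness of $(S_nH)$.

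The main obstacle is the intermediate regime $L_n\asymp\sigma_n$ in the decay argument: a priori, the multiplicative constants $C_1^{\tilde L_n}$ can overwhelm the exponentials. Overcoming this forces a careful choice of $\varepsilon$ so that $\theta(\varepsilon)$ comfortably exceeds $\log C_1$, together with the variance-based split of non-contracting blocks so that the constant $C_1$ is only paid on blocks where the exponent $c_1\delta^2\text{Var}(S_{A_k'}f)$ is already large enough to absorb it.
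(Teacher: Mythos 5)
Your overall strategy is the right one: the paper itself gives no in-text proof (it defers to \cite[Proposition 9.7]{DolgHaf LLT}), and your reconstruction — block decomposition, Remark~\ref{Rem conn} to make the lattice terms $Z_{t,s}$ constant, the lower bound $\mathrm{Var}(S_{A_k'}g_{t,\cdot})\ge c\,t^2\mathrm{Var}(S_{A_k'}f)-O(1)$ after the bounded coboundary telescopes, Proposition~\ref{PrSmVarBlock} on each bulk, and the variance accounting $\sigma_n^2\le\sum_k\mathrm{Var}(S_{A_k}f)+O(L_n)$ — is exactly the intended mechanism, and in fact delivers a bound of order $e^{-c\min(L_n,\sigma_n^2)}$, which implies \eqref{expp}. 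Your ``moreover'' argument (a purported reduction forces $Z_j$ to have finite $\|\cdot\|_{j,\omega}$-norm, hence to be continuous, hence constant on the connected product $\cX_{j,\infty}$, so $\mathrm{Var}(S_nH)=\sigma_n^2\to\infty$ contradicts tightness via Theorem~\ref{Var them} and Remark~\ref{Rem Red}) is a clean direct route, arguably simpler than deducing irreducibility from the decay estimate through Corollary~\ref{H corr}.

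Two points need repair. First, you apply Corollary~\ref{CutLemm} ``on each sufficiently long non-contracting block $A_k$'', but the complementary intervals $A_k$ need not be non-contracting: maximality of $\mathfrak B$ only rules out contracting sub-blocks of length in $[D(\ve),2D(\ve)]$, and Lemma~\ref{SubLemm} does not exclude that a long $A_k$ is contracting as a whole while all its medium-length sub-blocks are non-contracting. In that case your decomposition step does not apply to $A_k$ and you would lose its (possibly dominant) share of $\sigma_n^2$ — e.g.\ if $L_n=0$ and $A_1=\{0,\dots,n-1\}$ happens to be contracting, your bound degenerates to $1-\te(\ve)$. The fix is the paper's own Lemma~\ref{A lemma}: by maximality, the block of length $D(\ve)$ \emph{ending at} each $s$ in the trimmed interior is non-contracting, so \eqref{deccomp} holds at every such $s$ regardless of whether $A_k$ itself contracts; with that substitution your variance bookkeeping goes through verbatim. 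Second, your closing remark that one must choose $\ve$ so that $\te(\ve)$ ``exceeds $\log C_1$'' is both infeasible and unnecessary: $\te(\ve)<1$ and shrinks as $\ve\to0$, while $\ve$ must be taken small for Corollary~\ref{CutLemm} (so that $c_r(L(\ve))\le\del_0$) and $C_1$ is a fixed, possibly large constant; fortunately your variance-threshold split (trivial bound $\le1$ from \eqref{Norm bound} when $\mathrm{Var}(S_{A_k'}f)\le V_0$, absorbing $C_1$ into the exponential when $c_1\del^2V_0\ge2\log C_1$) already eliminates every per-block constant, so that condition should simply be dropped. With these two corrections the proof is sound.
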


\section{Two sided sequences and functions via Sinai's lemma}
\subsection{Sinai's lemma}
Let $(X_j)_{j\in\bbZ}$ be a two sided Markov chain satisfying Assumptions \ref{Assss} and \ref{Ball Ass}.
Denote $\cY_j=\prod_{k\in\bbZ}\cX_{j+k}$ and for $x,y\in\cY_j$
$$
\om_{j,a}(x,y)=2A_1\sup_{n\in\bbZ}a^{|n|}\omega(d_{j+n}(x_{j+n},y_{j+n}))
$$
for $a\in(0,1)$.
In this section we explain how to derive all the results from the previous section for partial sums of the form 
$$
S_nf=\sum_{j=0}^{n-1}f_j(...,X_{j-1},X_j,X_{j+1},...)
$$
from the corresponding results in the case when $f$ depends only on $X_k, k\geq j$. This is based on the following version of Sinai's lemma.

\begin{lemma}\label{Sinai}
Let $f_j:\cX_{j,\infty}\to\bbR$ be such that $\sup_j\|f_j\|_{j,a,\om}<\infty$. Then there exist functions $u_j:\cY_j\to\bbR$ and $g_j:\cX_{j,\infty}\to\bbR$ such that $\sup_j\|u_j\|_{j,a^{1/3},\om^{1/3}}<\infty$ and $\sup_j\|g_j\|_{j,a^{1/3},\om^{1/3}}<\infty$
and
$$
f_j(x)=u_{j+1}(...,x_{j},x_{j+1},x_{j+2},...)-u_j(...,x_{j-1},x_j,x_{j+1},...)+g_j(x_j,x_{j+1},...).
$$
\end{lemma}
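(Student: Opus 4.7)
The strategy is the classical Sinai construction, adapted to the graded two-sided modulus here. Fix a reference two-sided sequence $\bar z=(\bar z_m)_{m\in\bbZ}$ with $\bar z_m\in\cX_m$. For a global sequence $X=(x_m)_{m\in\bbZ}$ and each $j\in\bbZ$, let $\tilde X^{(j)}$ denote the sequence obtained from $X$ by replacing every coordinate at global time $m<j$ by $\bar z_m$; write $f_\ell(X)$ for $f_\ell$ evaluated on the view of $X$ from index $\ell$. Set
\[
u_j(X)=-\sum_{k=0}^{\infty}\bigl[f_{j+k}(X)-f_{j+k}(\tilde X^{(j)})\bigr].
\]
Because $X$ and $\tilde X^{(j)}$ agree at every global time $\geq j$, the two-sided modulus $\om_{j+k,a}$ between their views from $j+k$ is at most $a^{k+1}$, so $|f_{j+k}(X)-f_{j+k}(\tilde X^{(j)})|\leq Ca^{k+1}$ with $C=\sup_j\|f_j\|_{j,a,\om}$; the series converges absolutely and $\sup_j\|u_j\|_{L^\infty}<\infty$. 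A direct shift of summation index then produces
\[
g_j(x_j,x_{j+1},\ldots):=f_j(X)+u_j(X)-u_{j+1}(X)=f_j(\tilde X^{(j)})+\sum_{k=0}^{\infty}\bigl[f_{j+1+k}(\tilde X^{(j)})-f_{j+1+k}(\tilde X^{(j+1)})\bigr],
\]
a function that depends on $X$ only through $(x_j,x_{j+1},\ldots)$, since all other coordinates have been replaced by the fixed reference. Each summand compares two sequences that differ only at the single global time $j$, contributing $O(a^{k+1})$ in sup norm, so this series also converges, and the required decomposition $f_j=u_{j+1}-u_j+g_j$ holds by construction.

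The core of the proof is the modulus-of-continuity estimate. Given $X,Y$ with $\eta:=\om_{j,a}(X,Y)$, the elementary inequality $|n-k|\geq |n|-k$ yields $\om_{j+k,a}(X,Y)\leq a^{-k}\eta$, so each term $T_k:=f_{j+k}(\cdot)-f_{j+k}(\widetilde{\phantom{X}}^{(j)})$ in the series for $u_j$ satisfies two competing bounds,
\[
|T_k(X)-T_k(Y)|\leq 2Ca^{k+1}\quad\text{(sup)}\qquad\text{and}\qquad|T_k(X)-T_k(Y)|\leq 2Ca^{-k}\eta\quad\text{(variation).}
\]
I take the smaller of the two for each $k$ and balance them at the crossover $k^*\sim\tfrac12\log_a\eta$, which makes the remaining tails geometric on both sides of $k^*$. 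Summing gives
\[
|u_j(X)-u_j(Y)|\leq C'\eta^{1/2}=C'\Bigl(\sup_n a^{|n|}\om(d_{j+n}(x_{j+n},y_{j+n}))\Bigr)^{1/2}\leq C'\sup_n a^{|n|/3}\om(d_{j+n}(x_{j+n},y_{j+n}))^{1/3},
\]
where the last step uses $a,\om\leq 1$. Hence $\sup_j\|u_j\|_{j,a^{1/3},\om^{1/3}}<\infty$; the same interpolation applied term-by-term to the one-sided series defining $g_j$ (whose summands, differing at only one coordinate, satisfy the same $O(a^{k+1})$ sup bound and an analogous $a^{-k}\eta$ variation bound) produces the matching one-sided estimate for $g_j$.

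The main obstacle is precisely this interpolation: bounding each $T_k$ by the sup norm alone yields a quantity that does not vanish with $\eta$, while the variation bound alone grows like $a^{-k}$ and is not summable in $k$. Trading the two produces a H\"older-type modulus of order $\eta^{1/2}$, weaker than the hypothesis modulus $\eta$ but still comfortably stronger than the $\eta^{1/3}$-type modulus asserted in the lemma, leaving room to spare and completing the plan.
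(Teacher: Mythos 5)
Your proposal is correct and follows essentially the same route as the paper: define $u_j$ as the one-sided series of differences between $f_{j+k}$ and its value with the past replaced by a fixed reference sequence, observe that $f_j+u_j-u_{j+1}$ telescopes to a function of the future coordinates only, and control the variation by splitting the series at the crossover between the sup bound $O(a^{k})$ and the variation bound $O(a^{-k}\om_{j,a}(x,y))$. Your balancing actually yields a H\"older exponent $1/2$ (versus the paper's $1/3$, obtained from a slightly cruder case analysis), which is more than enough for the $\|\cdot\|_{j,a^{1/3},\om^{1/3}}$ bounds asserted in the lemma.
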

\begin{remark}
It will be clear that from the proof that $g_j=f_j$ when $f_j$ depends only on the coordinates $x_{j+k},k\geq 0$ and then $u_j=0$.
\end{remark}

\begin{proof}[Proof of Lemma \ref{Sinai}]
Fix some point $\alpha\in\prod_{k<j}\cX_k$ and 
  $u_j$ by
 $$
u_j(x)=\sum_{k=0}^\infty \left(f_{j+k}(x)-f_{j+k}(\alpha,x_{j},x_{j+1}....)\right).
$$
 Then for some constant $A>0$ we have
 \begin{equation}\label{up}
  |f_{j+k}(x)-f_{j+k}(\alpha,x_{j},x_{j+1}....)|\leq A a^{k}   
 \end{equation}
 and so 
 $$
\|u_j\|_{L^\infty}\leq \sum_{k=0}^{\infty}a^{-k}v_{\om,j+k}(f_{j+k})\leq C<\infty
$$
for some constant $C>0$.
 Notice that $u_j-u_{j+1}\circ T_j-f_j$ depends only on the coordinates with indexes $j+k,k\geq 0$, where $T_j:\cX_{j,\infty}\to\cX_{j+1,\infty}$ is the left shift. Set $g_j=f_j+u_{j+1}\circ T_j-u_j$.

 In order to complete the proof of the lemma it is enough to show that $\sup_j v_{j,a^{1/3},\om^{1/3}}(u_j)<\infty$. Let us take some $x,y\in\cY_j$ and suppose that
 $$
\om_j(x,y)=2A_1a^{n_0}\omega_{j+n_0}(d_{j+n_0}(x_{j+n_0},y_{j+n_0}))
 $$
 for some $n_0\geq 0$ (we assume here that $n_0\geq0$ and the argument for $n_0<0$ is similar). Let us take $m_0$ such that $(2A_1)a^{m_0}\leq \om_j(x,y)<(2A_1)a^{m_0+1}$. In particular, $m_0\leq n_0$. 
 Then using \eqref{up} $k\geq m_0/2$ we see that there is a constant $C_1>0$ such that
 $$
|u_j(x)-u_j(y)|\leq \sum_{k<m_0/2}|f_{j+k}(x)-f_{j+k}(y)|+ \sum_{k<m_0/2}|f_{j+k}(\alpha,x_j,x_{j+1},...)-f_{j+k}(\alpha,y_{j},y_{j+1},...)|+C_1(\om_j(x,y))^{1/2}.
 $$
 Now, for $k<m_0/2$ we have 
 $$
|f_{j+k}(x)-f_{j+k}(y)|\leq C\sup_{m}a^{|m|}\om(d_{j+k+m}(x_{j+k+m},y_{j+k+m})).
 $$
Suppose that for some $m_k\geq 0$ we have  
 $$
\sup_{m}a^{|m|}\om(d_{j+k+m}(x_{j+k+m},y_{j+k+m}))=a^{m_k}\om(d_{j+k+m_k}(x_{j+k+m_k},y_{j+k+m_k})).
 $$
 Then since  $\om_j(x,y)=(2A_1)a^{n_0}\omega_{j+n_0}(d_{j+n_0}(x_{j+n_0},y_{j+n_0}))$ and $k<m_0/2\leq n_0/2$
 $$
m_0 a^{m_k}\om(d_{j+k+m_k}(x_{j+k+m_k},y_{j+k+m_k}))=m_0a^{-k}a^{m_k+k}\om(d_{j+k+m_k}(x_{j+k+m_k},y_{j+k+m_k}))
$$
$$
\leq m_0 a^{n_0-k}\omega_{j+n_0}(d_{j+n_0}(x_{j+n_0},y_{j+n_0}))
 $$
 $$
\leq Ca^{n_0/3}\omega_{j+n_0}(d_{j+n_0}(x_{j+n_0},y_{j+n_0}))\leq Ca^{n_0/3}\left(\omega_{j+n_0}(d_{j+n_0}(x_{j+n_0},y_{j+n_0}))\right)^{1/3}=C'(\om_j(x,y))^{1/3}. 
 $$
 If for some $m_k<0$ we have
 $$
\sup_{m}a^{|m|}\om(d_{j+k+m}(x_{j+k+m},y_{j+k+m})=a^{-m_k}\om(d_{j+k+m_k}(x_{j+k+m_k},y_{j+k+m_k}))
 $$
 then 
 $$
a^{-m_k}\om(d_{j+k+m_k}(x_{j+k+m_k},y_{j+k+m_k}))=a^ka^{-m_k-k}\om(d_{j+k+m_k}(x_{j+k+m_k},y_{j+k+m_k}))\leq a^k\om_j(x,y).
 $$
Arguing similarly with $|f_{j+k}(\alpha,x_j,x_{j+1},...)-f_{j+k}(\alpha,y_{j},y_{j+1},...)|$ instead of $|f_{j+k}(x)-f_{j+k}(y)|$ and using that $a\in(0,1)$ and that $\om$ takes values in $[0,1]$ we see that there exists a constant $C_2>0$ such that
 $$
|u_j(x)-u_j(y)|\leq C_2(\om_j(x,y))^{1/3}.
 $$
\end{proof}

\subsection{Reducibility in the  two sided case}

Let $\mu_j$ be the law of $(X_j)_{j\in\bbZ}$ induced on $\cY_j$. Of course, both $\mu_j$ and $\cY_j$ do not really depend on $j$, but we will still distinguish between them since we denote by $x_{j+k}$ th $k$-th coordinate  of $x=(x_{j+k})_{k\in\bbZ}\in\cY_j$.  

\begin{assumption}\label{CondAss}
There exists a constant $C>0$ such that for every $j$ and all measurable $\Gamma\subset\cX_{j}$ and $x\in\cX_{j-1}$ we have 
$$
\bbP(X_j\in\Gamma|X_{j-1}=x)\leq C\bbP(X_j\in\Gamma).
$$
Moreover, the density $p_j(x,y)$ of $X_{j+1}$ given $X_j=x$ satisfies $|p_j(x,y)-p_j(x',y)|\leq C\om(d_{j}(x,x'))$.
\end{assumption}
\begin{example}
The first part of assumption holds if the first upper $\psi$-mixing coefficient is finite, namely if
$$
\psi_U(1)=\sup_j\sup\left\{\frac{\bbP(A\cap B)}{\bbP(A)\bbP(B)}-1: A\in\cF_{-\infty,j}, B\in\cF_{j+1,\infty}. \bbP(A)\bbP(B)>0\right\}<\infty.
$$
\end{example}
\begin{remark}\label{Remmm}
Under Assumption \ref{CondAss} we have that for every measurable set $\Gamma\subset\cX_{j,\infty}$ we have 
$$
\bbP((X_{j+k})_{k\geq0}\in\Gamma|X_{j-1}=x)\leq C\bbP((X_{j+k})_{k\geq0}\in\Gamma).
$$
Note that this also follows from the assumption that $\Psi_U(1)<\infty$.
Moreover, the condition  $|p_j(x,y)-p_j(x',y)|\leq C\om(d_{j+1}(x,x'))$ implies that the density $p(x,\bar y)$ of $(X_{k})_{k\geq j+1}$ given $X_j=x$ satisfies 
$|p_j(x,\bar y)-p_j(x',\bar y)|\leq C'\om(d_{j}(x,x'))$ for constant $C'>0$.
\end{remark}

\begin{proposition}\label{Red2sided}
Let Assumptions \ref{MixAss}, \ref{Assss}, \ref{Ball Ass} and \ref{CondAss} be in force.
 Let $g_j: \cX_j\to\bbR$ be the functions like in Lemma \ref{Sinai}. Then for every $h>0$ we have that $(g_j)$ is reducible to an $h\bbZ $-valued sequence iff $(f_j)$ is reducible to an $h\bbZ $-valued sequence.
\end{proposition}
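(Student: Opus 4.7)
The plan is to prove the two implications separately. The easier direction---a one-sided reduction of $(g_j)$ gives a two-sided reduction of $(f_j)$---follows immediately from Lemma \ref{Sinai}: substituting a one-sided reduction $g_j = \kappa_j(g_j) + H_j + hZ_j$ into $f_j = g_j + u_{j+1}\circ T_j - u_j$ produces a two-sided reduction $f_j = \mu_j(f_j) + \tilde H_j + hZ_j$, where $\tilde H_j$ differs from $H_j$ only by the telescoping, uniformly bounded coboundary $(u_{j+1}\circ T_j - \mu_{j+1}(u_{j+1})) - (u_j - \mu_j(u_j))$; the integer-valued $Z_j$ is carried over unchanged.

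For the reverse direction, the plan is to apply Sinai's Lemma \ref{Sinai} separately to $H_j$ and $Z_j$, obtaining
\[
H_j = u^H_{j+1}\circ T_j - u^H_j + \breve H_j, \qquad Z_j = u^Z_{j+1}\circ T_j - u^Z_j + \breve Z_j,
\]
with $\breve H_j, \breve Z_j$ one-sided. Combining these with the Sinai decomposition $f_j = u_{j+1}\circ T_j - u_j + g_j$ and the hypothesis $f_j = \mu_j(f_j) + H_j + hZ_j$, cancellation yields
\[
g_j = \mu_j(f_j) + \breve H_j + h\breve Z_j + V_{j+1}\circ T_j - V_j, \qquad V_j := u_j - u^H_j - hu^Z_j.
\]
Because $g_j, \breve H_j, \breve Z_j$ are one-sided, the two-sided expression $V_{j+1}\circ T_j - V_j$ is also one-sided, and its partial sums telescope to the uniformly bounded quantity $V_n\circ T_0^n - V_0$.

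The main technical obstacle is to verify that the Sinai construction applied to the integer-valued $Z_j$ gives integer-valued outputs $u^Z_j$ and $\breve Z_j$. Here I use that $hZ_j = f_j - \mu_j(f_j) - H_j$ inherits bounded variation in the two-sided modulus $\om_{j,a}$ from $f_j$ and $H_j$. For the two-sided paths $x$ and $(\alpha, x_{\geq j})$, which differ only in coordinates $m < j$, those coordinates sit at positions of weight $a^{|m-(j+k)|} \leq a^k$ in $\om_{j+k,a}$, so
\[
\bigl|Z_{j+k}(x) - Z_{j+k}(\alpha, x_{\geq j})\bigr| \leq Ca^k.
\]
Since each summand of $u^Z_j(x) = \sum_{k\geq 0}\bigl(Z_{j+k}(x) - Z_{j+k}(\alpha, x_{\geq j})\bigr)$ is a difference of integers, this bound forces it to vanish identically for all large $k$; hence $u^Z_j$ is a finite sum of integers, and so is $\breve Z_j = Z_j - u^Z_{j+1}\circ T_j + u^Z_j$.

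The remaining step is cosmetic: absorb the (telescoping) shift $\kappa_j(g_j) - \mu_j(f_j) = \mu_j(u_j) - \mu_{j+1}(u_{j+1})$ into the coboundary, and collect all real-valued one-sided contributions into a single $\tilde H_j$. This yields the one-sided reduction $g_j = \kappa_j(g_j) + \tilde H_j + h\breve Z_j$ with $(S_n \tilde H)$ tight and $\breve Z_j$ one-sided and integer-valued, as desired. Assumption \ref{CondAss} enters indirectly in guaranteeing that the two-sided analogues of the standing tools (in particular the martingale-coboundary decomposition of Theorem \ref{Var them}) are available in the setting where $(f_j)$ genuinely depends on the two-sided path.
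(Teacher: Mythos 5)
Your proof is correct, but it follows a genuinely different route from the paper's. The paper proves the nontrivial direction by contradiction: assuming $(f_j)$ is reducible, it uses Theorem \ref{Var them} (made applicable to two-sided functions via Lemma \ref{Sinai}) to write $f_j=u_j-u_{j+1}\circ T_j+M_j+hZ_j$ with a reverse martingale difference $M_j$ satisfying $\sum_j\mathrm{Var}(M_j)<\infty$, so that $\bbE[e^{itS_{j,n}M}]$ with $t=2\pi/h$ is close to $1$ for large $j$; if $(g_j)$ were irreducible, the one-sided operator norms $\|\cL_{j,t}^n\|_{a^{1/3},\om^{1/3}}$ would decay to $0$ (as in Corollary \ref{H corr}), and conditioning on the past together with Assumption \ref{CondAss} (domination and $\om$-continuity of the conditional densities) bounds $|\bbE[e^{itS_{j,n}M}]|$ by a constant times that operator norm, a contradiction. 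You instead transfer the reduction cohomologically: you apply the Sinai construction separately to $H_j$ and to $Z_j$, and your key observation — that $hZ_j=f_j-\mu_j(f_j)-H_j$ has uniformly bounded two-sided $\om$-variation, so the integer summands $Z_{j+k}(x)-Z_{j+k}(\alpha,x_{\geq j})$ are of size $O(a^{k})$ and hence vanish beyond a uniform threshold — is sound and forces $u^Z_j$ and $\breve Z_j$ to remain integer valued; collecting terms then yields a one-sided reduction of $(g_j)$ with the same lattice $h\bbZ$, with tightness of the new real part inherited from that of $(S_nH)$ by telescoping the uniformly bounded coboundaries. This is a legitimate and in fact more elementary argument: it uses neither Assumption \ref{CondAss}, nor Theorem \ref{Var them}, nor any operator-norm estimate (so your closing sentence about Theorem \ref{Var them} can simply be dropped — nothing in your construction invokes it), and it makes the correspondence between the two reductions completely explicit. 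The only blemishes are bookkeeping: with your definition $V_j=u_j-u^H_j-hu^Z_j$ the coboundary enters with a minus sign, and the regularity of $g_j,\breve H_j,\breve Z_j$ and of the resulting real part is with respect to the weakened modulus $(a^{1/3},\om^{1/3})$, which is the framework in which the one-sided theory is applied to $(g_j)$ in this section anyway (the paper is equally silent on this point). What the paper's heavier route buys is reuse of the conditioning-on-the-past characteristic-function estimate that is needed again in Lemmas \ref{estt1} and \ref{estt2}, whereas your route isolates the purely algebraic content of the equivalence.
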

\begin{proof}
First, it is clear that $(f_j)$ is reducible if $(g_j)$ is. Conversely, suppose that
 $(f_j)$ is reducible. 
 Then there are $h\!\!\neq\!\!0$ and 
 functions $H_j\!\!:\!\! \tilde X_j\!\!\to\!\! \mathbb{R}$, 
 $Z_j\!\!:\!\! \tilde X_j\!\!\to\!\! \mathbb{Z}$  
such that $\sup_j
\|H_j\|_{\om},<\infty$, $(S_n H)_{n=1}^\infty $ is  tight
and $f_j=H_j+hZ_j$ for all $j$. 
Applying Theorem \ref{Var them} 
with the sequence $(H_j)$ on the two sided shift (which is possible in view of Lemma \ref{Sinai})
  we can decompose
 $
 f_j=u_j-u_{j+1}\circ T_j+M_j+h Z_j,
 $
 where $M_j$ is a reverse martingale difference and  $\sup_j\max(\|u_j\|_{j,\om},\|M_j\|_{j,\omega})<\infty$. Moreover $T_j:\cY_j\to\cY_{j+1}$ is the left shift and
 $\sum_{j}\text{Var}(M_j)<\infty$.
Now, since $M_j$ is a reverse martingale difference and 
$\sum_{j}\text{Var}(M_j)<\infty$ 
we have that  with probability 1,
$S_{j,n}M$ can be made  arbitrarily small for large $j$. 
 Thus, by the Dominated Convergence Theorem we can ensure that
 $\bbE[e^{it S_{j,n}M }]$ is arbitrarily close to $1$ as $j\to\infty$,
 where $t=2\pi/h$. Now,  assume for the sake of contraction that $(g_j)$ is irreducible.  Then,  like in the proof of Corollary \ref{H corr} we see that the operator
 norms of $\cL_{j,t}^n$ with respect to the norms induces by $a^{1/3}$ and $\om^{1/3}$ decay to $0$  as $n\to\infty$
  for every nonzero $t$, where $\cL_{j,t}(h)=\cL_j(he^{itg_j})$.
Next, we show that under this assumption for every $j$ we get that  $\bbE[e^{it S_{j,n}M }]\to 0$ as $n\to\infty$, which contradicts that   $\bbE[e^{it S_{j,n}M }]$ 
is close to $1$. This will complete the proof. 
In order to prove that $\bbE[e^{it S_{j,n}M }]\to 0$ as $n\to\infty$, let us first note that $M_j=g_j+v_{j+1}-v_{j}-hZ_j$ for some sequence of functions $v_j$ with 
$\sup_j\|v_j\|_{a^{1/3},\om^{1/3}}<\infty$.

Now, by conditioning on the past $X_k, k<j$ we have
$$
\bbE[e^{itS_{j,n} M}]=
\bbE[e^{it S_{j,n}g+it v_{j+n}\circ T_j^n-it v_j}]
=\int\left(\int e^{itS_{j,n}g(x)+itv_{j+n}(T_j^n(y^{-},x))+itv_j(y^{-},x)}p_j(x|y_{j-1})d\kappa_j(x)\right)d\nu_j(y^{-})
$$
where $y^{-}=(...,y_{j-2},y_{j-1})$ and $p_j(x|y_{j-1})$ is the density of $(X_{j+k})_{k\geq 0}$ given $X_{j-1}=y_{j-1}$
and $\nu_j$ is the law of $(X_{k})_{k<j}$. 
 Next, since $\ka_j=(\cL_j^n)^*\ka_{j+n}$
for every realization $y^{-}$ we have 
$$
\int_{\cX_{j,\infty}} e^{itS_{j,n}g(x)+itv_{j+n}(\tilde T_j^n(y^{-1},x))+itv_j(y^{-},x)}p_j(x|y_{j-1})d\kappa_j(x)
=\int_{\cX_{j+n,\infty}} e^{itv_{j+n}(y^{-},\cdot)}\cL_{j,t}^n(e^{it v_j(y^{-},\cdot)}p_j(\cdot|y_{j-1}))d\kappa_{j+n}
$$
Using Assumption \ref{CondAss} and Remark \ref{Remmm} we see that 
\begin{equation}\label{CharEst}
 |\ka_j(e^{itS_{j,n}M})|\leq C\|\cL_{j,t}^n\|_{a^{1/3},\om^{1/3}}\to 0\,\,\text{ as }n\to\infty
\end{equation}
where $\|\cL_{j,t}^n\|_{a^{1/3},\om^{1/3}}$ is the operator norm with respect to norms $\|\cdot\|_{j,a^{1/3},\om^{1/3}}$ on the domain and  $\|\cdot\|_{j+n,a^{1/3},\om^{1/3}}$ on the range.
Therefore
$\lim_{n\to\infty} \mu_j(e^{itS_{j,n}M})=0$
  and the proof of the proposition is complete.
 \end{proof}

\subsection{Local limit theorem in the irreducible case}
Let us take a zero mean sequence of functions $f_j:\cY_j\to\bbR$ such that $\sup_j\|f_j\|_{j,\om}<\infty$ and define 
$$
S_nf=\sum_{j=0}^{n-1}f_j(...,X_{j-1},X_j,X_{j+1},...)
$$
By
 Lemma \ref{Sinai} there are sequences of functions $g_j:\cX_{j,\infty}\to\bbR$ and 
 $u_j: \cY_j\to\bbR$ such that 
 $\sup_j\|g_j\|_{j,a^{1/3},\om^{1/3}}<\infty$,  $\sup_j\|u_j\|_{j,a^{1/3},\om^{1/3}}<\infty$ and 
 $
f_j=g_j+u_{j+1}-u_j.
 $
Let $\cL_{j}$ be the transfer operators corresponding to $\ka_j$ and for every $t\in\bbR$ let $\cL_{j,t}(g)=\cL_j(ge^{it \phi_j})$.

 As it was explained in \S \ref{SSInt-LLT},
 the non-lattice LLT 
and the first order expansions follow from  the  two results below.

\begin{lemma}\label{estt1}
There are constants $\del_0,C_0,c_0\!\!>\!\!0$ such that for every $t\!\in\![-\del_0,\del_0]$ we have
$$
|\mu_0(e^{itS_n g})|\leq C_0e^{-c_0\sig_n^2 t^2}
$$
where $\sig_n=\|S_n f\|_{L^2}$.
\end{lemma}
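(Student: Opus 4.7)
The plan is to reduce to the one-sided picture via Sinai's Lemma \ref{Sinai} and then invoke the complex transfer operator estimate in Proposition \ref{PrSmVarBlock}, working throughout with the weaker modulus of continuity $\omega^{1/3}$ and parameter $a^{1/3}$ supplied by that lemma.

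The first step will be to compare $\mathrm{Var}(S_n g)$ with $\sigma_n^2=\|S_nf\|_{L^2}^2$. Summing the Sinai decomposition $f_j=g_j+u_{j+1}\circ T_j-u_j$ telescopes to $S_n g=S_n f+u_0-u_n\circ T_0^n$, and since Lemma \ref{Sinai} ensures $\sup_j\|u_j\|_\infty\leq C_u<\infty$, one has $\|S_ng-S_nf\|_\infty\leq 2C_u$. The $L^2$ reverse triangle inequality then yields $\sqrt{\mathrm{Var}(S_ng)}\geq \sigma_n-2C_u$, so $\mathrm{Var}(S_ng)\geq \sigma_n^2/4$ as soon as $\sigma_n\geq 4C_u$; the regime $\sigma_n<4C_u$ is absorbed into the constants $C_0,c_0$, since $e^{-c_0\sigma_n^2 t^2}$ is then uniformly bounded below on $|t|\leq\delta_0$.

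Next I will rewrite the characteristic function as a transfer operator pairing. Setting $\mathcal L_{j,t}(h):=\mathcal L_j(e^{itg_j}h)$ and using $\mu_n(\mathcal L_0^n h)=\mu_0(h)$ together with the observation that $e^{itg_j(X_j,X_{j+1},\ldots)}$ depends only on coordinates with index $\geq j$, an induction on $n$ shows that
$$
\mu_0(e^{itS_ng})=\mu_n\bigl(\mathcal L_{0,t}^n\mathbf 1\bigr),
$$
so $|\mu_0(e^{itS_ng})|\leq\|\mathcal L_{0,t}^n\mathbf 1\|_{L^\infty}\leq\|\mathcal L_{0,t}^n\|$, with the operator norm taken between the spaces $\mathcal B_{j,a^{1/3},\omega^{1/3}}$ and $\mathcal B_{j+n,a^{1/3},\omega^{1/3}}$.

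Finally I will apply Proposition \ref{PrSmVarBlock} with $v_j=tg_j$, working with the pair $(a^{1/3},\omega^{1/3})$ in place of $(a,\omega)$. This replacement does not disturb anything upstream: Assumption \ref{Assss} is preserved because $\omega\leq\omega^{1/3}$ on $[0,1]$ (so the same $\delta$ still works), and the $\omega^{1/3}$-ball of radius $r$ coincides with the $\omega$-ball of radius $r^3$, so Assumption \ref{Ball Ass} transfers as well. Choosing $\delta_0$ so small that $|t|\sup_j\|g_j\|_{j,a^{1/3},\omega^{1/3}}$ stays below the Proposition's threshold for $|t|\leq\delta_0$, we obtain
$$
\|\mathcal L_{0,t}^n\|\leq C_2\,e^{-c_2 t^2\,\mathrm{Var}(S_ng)}\leq C_2\,e^{-(c_2/4)\sigma_n^2 t^2}
$$
for $\sigma_n\geq 4C_u$, which together with the absorption of the $\sigma_n<4C_u$ range gives the lemma. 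The one genuine hurdle is checking that Proposition \ref{PrSmVarBlock} (together with its underlying Theorem \ref{RPF}, Corollary \ref{Cor1}, and the Lasota--Yorke inequality in Lemma \ref{Lasota Yorke}) carries over verbatim to the softened regularity $(a^{1/3},\omega^{1/3})$; everything else is routine bookkeeping.
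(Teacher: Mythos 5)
Your argument is correct for the statement as printed, and its core is the same as the paper's: both reduce to Proposition \ref{PrSmVarBlock} applied with $v_j=tg_j$ in the softened scale $(a^{1/3},\omega^{1/3})$ (whose admissibility you rightly check: $\omega\le\omega^{1/3}$ on $[0,1]$ preserves Assumption \ref{Assss}, and $\omega^{1/3}$-balls of radius $r$ are $\omega$-balls of radius $r^3$, so Assumption \ref{Ball Ass} transfers), combined with the comparison $\|S_ng\|_{L^2}=\|S_nf\|_{L^2}+O(1)$ coming from $S_ng=S_nf+u_0-u_n\circ T_0^n$ and $\sup_j\|u_j\|_\infty<\infty$ from Lemma \ref{Sinai}; the paper treats the $(a^{1/3},\omega^{1/3})$ transfer as implicitly routine, exactly as you do. Where you diverge is the passage from the characteristic function to the operator norm: since $S_ng$ depends only on the coordinates $X_k$, $k\ge 0$, you can use the duality $\mu_0(e^{itS_ng})=\ka_n(\cL_{0,t}^n\mathbf 1)$ directly, with no extra hypotheses, which is a cleaner route to the statement as written.

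Be aware, however, that the paper's own proof establishes the bound for $|\mu_0(e^{itS_nf})|$ (this is the displayed estimate in its proof), by conditioning on the past as in Proposition \ref{Red2sided} and invoking Assumption \ref{CondAss} to absorb the conditional density $p_0(\cdot|y_{-1})$ and the boundary terms $u_0$, $u_n\circ T_0^n$ into a uniform constant $C$ in front of $\|\cL_{0,t}^n\|_{a^{1/3},\omega^{1/3}}$; the printed ``$S_ng$'' in the lemma is evidently intended to be the two-sided sum $S_nf$, which is what Lemma \ref{estt2} and the two-sided LLT actually require. Your route does not yield that version: for fixed $t$ the naive comparison $|\bbE e^{itS_nf}-\bbE e^{itS_ng}|\le 2|t|\sup_j\|u_j\|_\infty$ is an error of constant size, which is incompatible with the target bound $C_0e^{-c_0\sigma_n^2t^2}$ once $\sigma_n\to\infty$, so the conditioning-on-the-past step is genuinely needed there. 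A minor point: since $\bbE[S_ng]$ need not vanish, the reverse triangle inequality gives $\sqrt{\mathrm{Var}(S_ng)}\ge\sigma_n-4\sup_j\|u_j\|_\infty$ rather than with the constant $2$, which of course changes nothing.
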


\begin{lemma}\label{estt2}
Let $\del_0$ be like in Lemma \ref{estt1}. 
If $(f_j)$ is irreducible then for every $T>\del_0$ we have 
$
 \int_{\del_0\leq |t|\leq T} 
|\gamma_0(e^{it S_nf})|dt=o(\sig_n^{-1}).
$
\end{lemma}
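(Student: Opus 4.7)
The plan is to reduce the bound to the one-sided estimate \eqref{Suff} applied to the sequence $(g_j)$ produced by Sinai's lemma, exploiting the irreducibility transfer of Proposition \ref{Red2sided}.

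First, by Lemma \ref{Sinai} write $f_j = g_j + u_{j+1}\circ T_j - u_j$, where $g_j$ depends only on $(x_k)_{k\geq j}$ and $\sup_j\max(\|g_j\|_{j,a^{1/3},\om^{1/3}},\|u_j\|_{j,a^{1/3},\om^{1/3}})<\infty$. Telescoping gives $S_n f = S_n g + u_n\circ T_0^n - u_0$, and since $u$ is uniformly bounded we have $\tilde\sig_n := \|S_n g\|_{L^2} = \sig_n + O(1)$, so any $o(\tilde\sig_n^{-1})$ bound is also $o(\sig_n^{-1})$. Moreover Proposition \ref{Red2sided} says that the irreducibility of $(f_j)$ transfers to $(g_j)$.

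Second, I would derive the pointwise bound
\begin{equation}\label{PlanPt}
|\gamma_0(e^{itS_n f})| \leq C\,\|\cL_{0,t}^n\|_{a^{1/3},\om^{1/3}}
\end{equation}
by repeating the conditioning argument that produced \eqref{CharEst} in Proposition \ref{Red2sided}. Concretely, conditioning on the past $y^-=(\ldots,X_{-2},X_{-1})$ one writes
$$
\gamma_0(e^{itS_n f}) = \int\!\!\left(\int e^{itS_n g(x)}\,e^{it(u_n(T_0^n(y^-,x)) - u_0(y^-,x))}\,p_0(x|y_{-1})\right) d\nu(y^-),
$$
and then uses the duality $\ka_0=(\cL_0^n)^*\ka_n$ to rewrite the inner integral with $\cL_{0,t}^n$ in play, the multipliers $e^{\pm it u_\bullet(y^-,\cdot)}$ and the density $p_0(\cdot|y_{-1})$ serving as test functions. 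Assumption \ref{CondAss} (through Remark \ref{Remmm}) bounds $p_0(\cdot|y_{-1})$ uniformly in $y^-$ by a constant multiple of the law $\ka_0$, and the uniform regularity of the $u_j$'s ensures that the multipliers have uniformly bounded $\|\cdot\|_{\cdot,a^{1/3},\om^{1/3}}$-norm.

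Finally, integrating \eqref{PlanPt} over $\del_0\leq|t|\leq T$ reduces the claim to
$$
\int_{\del_0\leq |t|\leq T} \|\cL_{0,t}^n\|_{a^{1/3},\om^{1/3}}\, dt = o(\tilde\sig_n^{-1}),
$$
which is exactly \eqref{Suff} for the one-sided irreducible sequence $(g_j)$; this is delivered by the machinery already assembled in Sections \ref{SSInt-LLT}--\ref{Edge1} for Theorem \ref{LLT1} applied to $(g_j)$. The main technical obstacle is \eqref{PlanPt}: although Proposition \ref{Red2sided} treats a structurally identical situation, one must verify that the map $x\mapsto u_n(T_0^n(y^-,x))$, which depends on $x_0,\ldots,x_{n-1}$ as well as on $x_n,x_{n+1},\ldots$, has $\|\cdot\|_{n,a^{1/3},\om^{1/3}}$-norm bounded uniformly in $y^-$. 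This is a direct consequence of the two-sided modulus $\om_{j,a^{1/3}}$ dominating the one-sided one, combined with $|e^{is}-e^{is'}|\leq |s-s'|$; the remainder is bookkeeping with triangle inequalities.
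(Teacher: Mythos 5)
Your proposal is correct and follows essentially the same route as the paper: transfer irreducibility to the one-sided sequence $(g_j)$ via Proposition \ref{Red2sided}, bound $|\gamma_0(e^{itS_nf})|$ by $C\|\cL_{0,t}^n\|_{a^{1/3},\om^{1/3}}$ through the conditioning-on-the-past argument (the paper simply cites \eqref{CharEst} rather than re-deriving it), and then invoke the one-sided estimate \eqref{Suff}/\eqref{Suff J} in the irreducible case together with $\sig_n=\|S_ng\|_{L^2}+O(1)$.
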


\begin{proof}[Proof of Lemma \ref{estt1}]
Arguing like in the proof of Proposition \ref{Red2sided}, we see that
there is a constant $C>0$ such that for all $t$ and $n$ we have
\begin{equation}\label{CharEst}
 |\mu_0(e^{itS_n f})|\leq C\|\cL_{0,t}^n\|_{a^{1/3},\omega^{1/3}}.
\end{equation}
Now the result follows from the corresponding result in the one sided case, noting that $\|S_n g\|_{L^2}=\|S_n f\|_{L^2}+O(1)$.
\end{proof}

\begin{proof}[Proof of Lemma \ref{estt2}]
By Proposition \ref{Red2sided} 
the sequence of functions $(g_j)$ is also irreducible. 
Thus, the lemma follows from \eqref{CharEst} and 
 \eqref{Suff J} which holds in the irreducible case.
\end{proof}

\subsection{LLT in the reducible case}
Using Lemma \ref{Sinai} and conditioning on the past, in the reducible case we can also prove an LLT similar to the one in Section~\ref{Red sec}. 
By Proposition \ref{Red2sided} we have $R(\phi)=R(\psi)$. In particular $a(\phi)=a(\psi)$, where $a(\cdot)$ was defined at the beginning of Section \ref{Red sec}. 
Now, 
the decay of the characteristic functions at the relevant points needed in the proof of Theorem \ref{LLT RED} can be obtained by repeating the arguments in the irreducible case. The second ingredient is to expand the characteristic functions around points in $(2\pi/a(g))\bbZ$. This is done by conditioning on the past and using an appropriate Perron-Frobenius theorem 
(see \cite[Theorem D.2]{DolgHaf PTRF 2}) for each realization on the past, and then integrating. In order not to overload the paper the exact details are left for the reader.

\section{Applications: revisiting some of the examples in \cite{MarShif1}}
In \cite[Section 4]{MarShif1} we provided many examples of processes which fit into the weaker type of sequential functional analytic Markovian framework considered there. In this section we will show that we can also fit a modified version of some of these examples into the framework of this paper. The main diffuclity here is to overcome the fact that we require stronger ``variation" than the approximation coefficients $v_{j,p,\delta}(g)=\sup_r\delta^{-r}\|g-\bbE[g|\cF_{j-r,j+r}]$ considered in \cite{MarShif1}. This will be done by assuming the objects defining the examples are equicontinuous. 
We note that we do not include here applications to dynamical systems since our approach does not yield new examples compared with \cite{DolgHaf LLT} (although we can cover some of the results there using our approach).
\subsection{Products of positive matrices}
Let us take some $d>1$ and $C>1$ and denote by $M(d,C)$ the set of all matrices with entries in $[C^{-1},C]$.
Let $(X_j)_{j\in\bbZ}$. Let $\cX_j$ be the state space of $X_j$. Let $A_j:\cX_j\to M(d,C)$ be a family of equicontinuous  functions. Let us extend $A_j$ to a two sided sequence by setting $(A_j)_{k,\ell}(X_j)=1$ for all $j<0$ and $1\leq k,\ell\leq d$.  
 Define 
$$
\textbf{A}_j^n(x_j,...,x_{j+n-1})=A_{j+n-1}(x_{j+n-1})\cdots A_{j+1}(x_{j+1})A_j(x_j).
$$
Then by the arguments of \cite[Ch.4]{HK}
we obtain the following random Perron Frobenius theorem. There are functions $\la_j=\la_j(x_j,x_{j+1},...)$ positive vectors $h_j=h_j(...,x_{j-1},x_j,x_{j+1},...)$, vectors $\nu_j=\nu_j(...,x_{j-1},x_j,x_{j+1},...)$ and constants $C>0$ and $\delta\in(0,1)$ such that, for all $j$ and $n$, 
\begin{equation}\label{RRPF}
\left\|\frac{\textbf{A}_j^n}{\la_{j,n}}-\nu_j\otimes h_{j+n}\right\|\leq C\delta^n    
\end{equation}
where $\la_{j,n}=\prod_{k=j}^{j+n-1}\la_k$ and $(\nu_j\otimes h_{j+n})(g)=\nu_j(g)h_{j+n}$. Moreover, $A_j(x_j)h_j=\la_jh_{j+1}, (A_j(x_j))^*\nu_{j+1}=\la_j\nu_j$ and $\nu_j(h_j)=1$. 
Now, by \cite[Eq. (5.10)]{Nonlin} there are positive linear functionals $\mu$ such that 
$$
\la_j(x_j,x_{j+1},...)=\lim_{n\to\infty}F_{j,n}(x_j,x_{j+1},...,x_{j+n})
$$ 
uniformly in $j$,
where 
$$
F_{j,n}(x_j,x_{j+1},...,x_{j+n})=\frac{\mu(A_{j+n}(x_{j+n})\cdots A_j(x_j)\textbf{1})}{\mu(A_{j+n}(x_{j+n})\cdots A_{j+1}(x_{j+1})\textbf{1})}.
$$
Now, it is not hard to show that for a fixed $n$ the family of functions $F_{j,n}, j\in\bbZ$ are equicontinuous. Thus, by taking the limit as $n\to\infty$ we conclude that $\la_j$ are are equicontinuous as well. 
 Therefore, since $\la_j$ is bounded and bounded away from $0$ we can get limit theorems for
$$
\ln\|\textbf{A}_0^n(X_0,X_{1},...,X_{n-1})\|\text{ and }\ln\left(\nu(\textbf{A}_0^n(X_0,X_{1},...,X_{n-1})g)\right)
$$
where $g$ and $\nu$ are two vectors with positive entries. Indeed, the entire problem reduces to the partial sums $\sum_{j=0}^{n-1}\ln(\lambda_j(X_0,...,X_j,X_{j+1},...))$.

\subsection{Random Lyapunov exponents}\label{Lyp Sec}
Let $d>1$ and let $A$ be a hyperbolic matrix with distinct eigenvalues $\la_1,...,\la_d$. Suppose that for some $k<d$ we have $\la_1<\la_2<...<\la_k<1<\la_{k+1}<...<\la_{d}$. Let $h_j$ be the corresponding eigenvalues.

Now, let $(A_j)$ be a sequence of matrices such that $\sup_j \|A_j-A\|\leq \ve$. Then, if $\ve$ is small enough there are numbers $\la_{j,1}<\la_{j,2}<...<\la_{j,k}<1<\la_{j, k+1}<...<\la_{j,d}$
and vectors $h_{j,i}$ such that 
$$
A_{j}h_{j,i}=\la_{j,i}h_{j+1,i}.
$$
Moreover, $\sup_j|\la_{j,i}-\la_i|$ and 
$\sup_j\|h_{j,i}-h_{i}\|$ converge to $0$ as $\ve\to 0$.

Now, the sequence $(A_j)$ is uniformly hyperbolic and the sequences 
$(\la_{1,j})_j,...,(\la_{d,j})_j$ can be viewed as its sequential Lyapunov exponents. Moreover,  
the one dimensional  spaces $H_{i,j}=\text{span}\{h_{i,j}\}$ can be viewed as its sequential Lyapunov spaces.
Next,  $\la_{i,j}$ and $h_{i,j}$ 
can be approximated exponentially fast in $n$ by functions of 
$$(A_{j-n},A_{j-n+1},...,A_j,A_{j+1},...,A_{j+n}),$$ uniformly in $j$. 

Finally, let us consider a Markov chain $(X_j)$ and let us take random  matrices $A_j=A_j(X_j)$ such that 
$$
\sup_{j}\|A_j-A\|_{L^\infty}\leq \varepsilon
$$
and $x_j\to A_j(x_j)$ is an equicontinuous family of functions.
Then if $\varepsilon$ is small enough the random variables $\la_{i,j}$ and $h_{i,j}$ can be approximated exponentially fast by equicontinuous  functions of  $X_{j+k}, |k|\leq r$ as $r\to\infty$,  and thus the resulting limits are also  equicontinuous.
\subsection{Linear processes}
Let $g_{j}:\cX_j\to\bbR$ be measurable uniformly bounded functions, let $(a_k)$ be a sequence of numbers and define 
$$
f_j(...,X_{j-1},X_j,X_{j+1},...)=\sum_{k\in\bbZ}a_kg_{j-k}(X_{j-k}),
$$
assuming that the above series converges. Note that 
$$
\|f_j\|_{L^\infty}\leq \sum_{k}|a_k|\|g_{j-k}\|_{L^\infty}
$$
and so $f_j$ are uniformly bounded. Next,
$$
|f_j(x)-f_j(y)|\leq C\sup_m|g_m(x_{m})-g_m(y_m)|.
$$
Therefore, if we assume that the functions $g_m$ are equicontinuous then also $f_j$ are equicontinuous.

\subsection{Iterated random functions driven by inhomogeneous  Markov chains}\label{Iter}
 Iterated random functions (cf. \cite{[15]}) are an important class of processes. Many nonlinear models like GARCH,
bilinear and threshold autoregressive models fit into this framework.
Here we describe a non-stationary version of such processes which are based on Markov chains $X_j$ instead of iid sequences.
Beyond the theoretical interest, considering such processes driven by inhomogeneous Markov chains rather than iid variables could be useful for practitioners.

\subsubsection{Iterated functions with initial condition}
Let $G_k:\bbR\times\cX_k\to\bbR$ be measurable functions. Let
$L_k(x_k)$ denote the Lipschitz  constant of the function $G_k(\cdot,x_k)$.
Define a process recursively by setting $Y_0=y_0$ to be a constant then  setting $Y_k=G_k(Y_{k-1},X_k), k\geq 1$. 
 Notice that for $k\geq1$,
$$
Y_k=G_{k,X_k}\circ G_{k-1,X_{k-1}}\circ\cdots\circ G_{1,X_1}(y_0):=f_k(X_1,...,X_{k-1},X_k).
$$
where $G_{s,X_s}(y)=G_s(y,X_s)$. This fits our model of functions $f_k$ that depend on the entire path of a two sided Markov chain $(X_j)_{j\in\bbN}$ (note that one can always extend $X_j$ to a two sided sequence simple by considering iid copies of $X_0$, say, which are also independent of $X_j, j\geq0$). Namely, by abusing the notation we may write 
$$
f_k(X_1,...,X_{k-1},X_k)=f_k(...,X_{k-1},X_{k},X_{k+1},...)
$$
where the dependence is only on $X_1,...,X_{k-1},X_k$.
\begin{lemma}
Suppose that there are uniformly bounded sets $K_i(x_i)\subset\bbR$ such that 
 $G_{i,X_i}(K_i(X_i))\subset K_{i+1}(X_{i+1})$ almost surely for all $i$. Assume also that $K_1:=\cap_{x_1}K_1(x_1)\not=\emptyset$. Then if we start with   $y_0\in K_1$ then 
 $$
\sup_k\|Y_k\|_{L^\infty}<\infty.
 $$
 In particular this is the case when the functions $G_{i}$ are uniformly bounded.
\end{lemma}

\begin{lemma}
Suppose that $|G_{k,x}(a)|\leq C|a|$ for all $x,a$ and $k$ for some $C>0$, that
$\delta_0:=\sup_k\|L_k(X_k)\|_{L^\infty}<1$ and  $\left|G_{k,x}(a)-G_{k,z}(a)\right|\leq C\om(d_k(x,z))|a|$ for some modulus of continuity $\om$, $C>0$ and all $a$. Then the functions $f_j$ are equicontinuous with respect to the variation $\om_j(x,y)=\sup_n\delta^n\omega(d_{j+n}(x_{j+n},y_{j+n}))$, for all $\delta_0<\delta<1$.
\end{lemma}
\begin{proof}
We have 
$$
f_k(x_1,...,x_k)-f_k(z_1,...,z_k)=\sum_{s=0}^kG_{k,x_k}\circ\cdots G_{s+1,x_{s+1}}\circ (G_{s,x_s}-G_{s,z_s})\circ G_{s-1,z_{s-1}}\circ\cdots\circ G_{1,y_1}(y_0). 
$$
Now,
$$
\left| G_{s-1,y_{s-1}}\circ\cdots\circ G_{1,y_1}(y_0)\right|=\left|G_{s-1,y_{s-1}}\circ\cdots\circ G_{1,y_1}(y_0)-G_{s-1,y_{s-1}}\circ\cdots\circ G_{1,y_1}(0)\right|\leq \delta_0^{s-1}
$$
and so
$$
\left|(G_{s,x_s}-G_{s,y_s})\circ G_{s-1,y_{s-1}}\circ\cdots\circ G_{1,y_1}(y_0)\right|\leq C\omega(d_s(x_s,y_s))\delta_0^{s-1}|y_0|.
$$
Therefore,
$$
\left|f_k(x_1,...,x_k)-f_k(y_1,...,y_k)\right|\leq C\max_{s\leq k}\delta^s\omega(d_s(x_s,y_s)).
$$
\end{proof}
Let us discuss the special conditions $|G_{k,x}(a)|\leq C|a|$ and $\left|G_{k,x}(a)-G_{k,y}(a)\right|\leq \om(d_k(x,y))|a|$. Consider, for instance, the case when
$G_k(a,x)=\sum_{\ell=1}^d H_{k,\ell}(a)F_{k,\ell}(x)$ (such functions are dense in an appropriate sense). Then the condition holds true if the functions $H_{k,\ell}$ and $F_{k,\ell}$ are uniformly bounded,  $|H_{k,\ell}(a)|\leq C|a|$ (i.e., $H_{k,\ell}(0)=0$ and $\sup_{k,\ell,z}|H_{k,\ell}'(z)|<\infty$) and $\sup_{k,\ell}|F_{k,\ell}(x)-F_{k,\ell}(y)|\leq \omega(|x-y|)$. In particular we can consider the case when the derivatives of  $F_{k,\ell}$ are uniformly bounded, assuming that $\cX_j$ are manifolds.




\subsubsection{Iterated functions without initial condition}
The processes $Y_k$ defined in the previous section can never be stationary since $Y_0=y_0$ and $Y_k$ depends on $X_1,...,X_k$.  
Here we considering a related class of recursive sequences  which will be stationary when the chain $(X_j)$ is stationary and the functions $G_k$ coincide.  
Let $G_k:\bbR\times\cX_k\to\bbR$ be like in the previous section. We define recursively $Y_k=G_k(Y_{k-1},X_k)=G_{k,X_k}(Y_{k-1})$. Then there is a measurable functions $f_k$ on $\prod_{j\leq k}\cX_j$ such that 
$$
Y_k=f_k(...,X_{k-1},X_k).
$$
This fits our general framework by either considering functions $f_k$ which depend only the the coordinates $x_j, j\leq k$.  Then like in the previous section we have the following two results.

\begin{lemma}
Suppose that there are uniformly bounded sets $K_i(x_i)\subset\bbR$ such that 
 $G_{i,X_i}(K_i(X_i))\subset K_{i+1}(X_{i+1})$ almost surely for all $i$. Assume also that $K_1:=\cap_{x_1}K_1(x_1)\not=\emptyset$. Then if we start with   $y_0\in K_1$ then 
 $$
\sup_k\|Y_k\|_{L^\infty}<\infty.
 $$
 In particular this is the case when the functions $G_{i}$ are uniformly bounded.
\end{lemma}

\begin{lemma}
Suppose that $|G_{k,x}(a)|\leq C|a|$ for all $x,a$ and $k$ for some $C>0$, that
$\delta_0:=\sup_k\|L_k(X_k)\|_{L^\infty}<1$ and  $\left|G_{k,x}(a)-G_{k,z}(a)\right|\leq C\om(d_k(x,z))|a|$ for some modulus of continuity $\om$, $C>0$ and all $a$. Then the functions $f_j$ are equicontinuous with respect to the variation $\om_j(x,y)=\sup_n\delta^n\omega(d_{j+n}(x_{j+n},y_{j+n}))$, for all $\delta_0<\delta<1$.
\end{lemma}




\begin{thebibliography}{Bow75}

\bibliographystyle{alpha}
\itemsep=\smallskipamount


\bibitem{Jon}
J. Aaronson, M. Denker, {\em Local limit theorems for partial sums of stationary sequences generated by Gibbs–Markov maps}, Stochastics and Dynamics 1.02 (2001): 193--237.


\bibitem{ANV}
R. Aimino, M. Nicol and S. Vaienti, {\em Annealed and quenched limit theorems for random expanding dynamical systems}, Probab. Th. Rel. Fields {\bf 162} (2015) 233--274.




\bibitem{ARHW22} 
A. Algom, F. Rodrguez Hertz, Z. Wang 
{\em Logarithmic Fourier decay for self conformal measures,} 
J. LMS {\bf 106} (2022) 1628--1661.

\bibitem{ARHW23} 
A. Algom, F. Rodrguez Hertz, Z. Wang 
{\em Polynomial Fourier decay and a cocycle version of Dolgopyat's method for self conformal measures,}
arXiv:2306.01275.



\bibitem{ALS09} A. Ayyer, C. Liverani, M. Stenlund
{\em Quenched CLT for random toral automorphism,} 
Discrete Contin. Dyn. Syst. {\bf 24} (2009) 331--348.


\bibitem{ADDS} A. Avila, D. Dolgopyat, E. Duriev, O. Sarig
{\it The visits to zero of a random walk driven by an  irrational rotation,}
Israel J. Math. {\bf 207} (2015) 653--717.


\bibitem{BDH}
L. Backs, D. Dragi\v cevi\' c, Y. Hafouta, {\em Livsic regularity for random and sequential dynamics through transfer operators}, https://arxiv.org/abs/2508.08972, 31 pages


\bibitem{Bk95} Bakhtin V.
{\it Random processes generated by a hyperbolic sequence of mappings-I,} 
Izvestiya Math. {\bf 44} (1995) 247--279.


\bibitem{Beck10} J. Beck  {\em Randomness of the square root of 2 and the giant leap,} 
Periodica Mathematica Hungarica. Part 1: {\bf 60} (2010) 137--242;
Part 2: {\bf 62} (2011) 127--246.

\bibitem{BR83}
 E. A. Bender,  L. B. Richmond. {\em Central and local limit theorems applied to asymptotic
enumeration. II. Multivariate generating functions}. J. Combin. Theory {\bf 34} (1983) 255--265.


\bibitem{BeQu}
Y. Benoist, J. F. Quint {\em Random walks on reductive groups}, Springer, 
 2016.


\bibitem{BCR16}
N. Berger, M. Cohen, R. Rosenthal {\em Local limit theorem and equivalence of dynamic and static points of view for certain ballistic random walks in i.i.d. environments,} Ann. Probab. {\bf 44} (2016) 2889--2979.

\bibitem{BLRB} 
F. Bonetto, J. L. Lebowitz, L. Rey-Bellet
{\em Fourier's law: a challenge to theorists,} 
in {\em Mathematical physics 2000} (edited by A. Fokas, A. Grigoryan, T. Kibble and B. Zegarlinski)
128--150, Imp. Coll. Press, London, 2000.

\bibitem{Brad}
R.C. Bradley, {\em Introduction to Strong Mixing Conditions}, Volume 1, Kendrick Press, Heber City, 2007.






\bibitem{Br05} E. Breuillard {\em Distributions diophantiennes et theoreme limite local sur $\mathbb{R}^d$,} 
Probab. Theory Related Fields {\bf 132} (2005)  39--73. 

\bibitem{BRLLT05}
E. Breuillard, {\em Local limit theorems and equidistribution
of random walks on the Heisenberg group}. GAFA
{\bf 15} (2005) 35--82.

\bibitem{BRLLT23}
E. Breuillard, B. Timoth\'ee,  {\em The local limit theorem on nilpotent Lie groups}, 
arXiv:2304.14551.

\bibitem{BU} M. Bromberg, C. Ulcigrai
{\em A temporal central limit theorem for real-valued cocycles over rotations,} 
Ann. Inst. Henri Poincare Probab. Stat. {\bf 54} (2018) 2304--2334.




\bibitem{BFRT}
H. Bruin, C. Fougeron, D. Ravotti, D. Terhesiu
{\em On asymptotic expansions of ergodic integrals for $\bbZ^ d$--extensions of translation flows,} arXiv:2402.02266.

\bibitem{Can76}
E. R. Canfield, {\em Central and local limit theorems for the coefficients of polynomials of binomial type}, J. Combin. Theory {\bf 23} (1977) 275--290.

\bibitem{CaRa} R. Castorrini, D. Ravotti
{\em Horocycle flows on abelian covers of surfaces of negative curvature,}
preprint.

 \bibitem{Cogburn}
 R. Cogburn, {\em On the central limit theorem for Markov chains in random environments}
Ann. Probab. 19 (1991), 587-604. MR 92h:60029

\bibitem{CLBR} Conze J.--P., Le Borgne S., Roger M. 
{\em Central limit theorem for stationary products of toral automorphisms,} 
Discrete Contin. Dyn. Syst. {\bf 32} (2012) 1597--1626.


\bibitem{CR07} Conze J.--P., Raugi A. {\em Limit theorems for sequential expanding dynamical systems on $[0,1]$,} 
Contemp. Math. {\bf 430} (2007) 89--121.




 \bibitem{CD24} K. Czudek, D. Dolgopyat 
{\em The central limit theorem and rate of mixing for simple random walks on the circle,}
arXiv:2404.11700.







\bibitem{DSL16} J. De Simoi, C. Liverani
{\em Statistical properties of mostly contracting fast-slow partially hyperbolic systems,} 
Invent. Math. {\bf 206} (2016) 147--227.


\bibitem{dLMM}
R. de la Llave,  J. M. Marco, R. Moriyon
{\em Canonical perturbation theory of Anosov systems and regularity results for the Livsic cohomology equation,}
Ann. of Math. {\bf 123}  (1986) 537--611.


\bibitem{DPZ} M. F. Demers, F. Pene, H.--K. Zhang
{\em Local limit theorem for randomly deforming billiards,} 
Comm. Math. Phys. {\bf 375} (2020) 2281--2334.



\bibitem{LD1}
M. Demers, C. Liverani, {\em Central limit theorem for sequential dynamical systems}, https://arxiv.org/pdf/2502.07765




\bibitem{[15]}
P. Diaconis and D.Freedman, {\em Iterated random functions}. SIAM Rev. 41 45--76 (1999).




\bibitem{Dob} R. L. Dobrushin 
{\em Central limit theorem for non-stationary Markov chains,} Teor. Veroyatnost. i Primenen. 
{\bf 1}  (1956) part I: pp.
72--89; part II: pp. 365--425

 \bibitem{DT77} R. L. Dobrushin, B. Tirozzi, 
{\em The central limit theorem and the problem of equivalence of ensembles,}
Comm. Math. Phys.  {\bf 54} (1977)  173--192.



 \bibitem{DG13} D. Dolgopyat, I. Goldsheid
 {\em Limit Theorems for random walks in a 1D random environment,} Arch. Math. 
 {\bf 101} (2013) 191--200. 

\bibitem{DS17} D. Dolgopyat, O. Sarig
{\em Temporal distributional limit theorems for dynamical systems,} 
J. Stat. Phys. {\bf 166} (2017) 680--713.

\bibitem{DN16} D. Dolgopyat, P. Nandori 
{\em Nonequilibrium density profiles in Lorentz tubes with thermostated boundaries,} 
Comm. Pure Appl. Math. {\bf 69} (2016) 649--692.

\bibitem{DN-llt} D. Dolgopyat, P. Nandori 
{\em On mixing and the local central limit theorem for hyperbolic flows,}  
Erg. Th. Dyn. Syst.
{\bf 40} (2020) 142--174. 

\bibitem{DG21} D. Dolgopyat, I. Goldsheid
{\em Constructive approach to limit theorems for recurrent diffusive random walks on a strip,}
 Asymptot. Anal. {\bf 122} (2021) 271--325.
 
 \bibitem{DK20} D. Dolgopyat, D. Karagulyan  
{\em Dynamical random walk on the integers with a drift,}
Ann. Inst. Henri Poincare Prob. Stat. {\bf 59} (2023) 1642--1676.

\bibitem{DNP}  D. Dolgopyat, P. Nandori, F. Pene
{\it Asymptotic expansion of correlation functions for $\bbZ^d$ covers of hyperbolic flows,}
 Ann. Inst. Henri Poincare Probab. Stat. {\bf 58} (2022) 1244--1283.





\bibitem{DN-Mech} D. Dolgopyat, P. Nandori
{\em Infinite measure mixing for some mechanical systems,} 
 Adv. Math. {\bf 410} (2022), part B, paper \#108757, 56 pp.

 \bibitem{DDKN1} Dolgopyat D., Dong C.,  Kanigowski A., Nandori P. 
{\it Mixing properties of generalized $(T, T^{-1})$ transformations,} Israel J. Math.
{\bf 247} (2022)  21--73.

\bibitem{DDKN2} Dolgopyat D., Dong C.,  Kanigowski A., Nandori P. 
{\it Flexibility of statistical properties for smooth systems satisfying the central limit theorem,}
Inventiones Math. {\bf 230} (2022) 31--120.

\bibitem{DolgHaf PTRF 1}
D. Dolgopyat and Y. Hafouta, {\em A Berry Esseen theorem and Edgeworth expansions for uniformly elliptic inhomogenous Markov chains}, Probab. Theory Relat. Fields 186, 439–476 (2023)

\bibitem{DolgHaf PTRF 2}
D. Dolgopyat and Y. Hafouta, {\em Berry Esseen theorems for sequences of expanding maps}, Probab. Theory Related Fields (2025), in press, https://doi.org/10.1007/s00440-025-01368-7

\bibitem{DolgHaf LLT}
D. Dolgopyat and Y. Hafouta, {\em Local limit theorems for expanding maps}, http://arxiv.org/abs/2407.08690 (64 pages)


\bibitem{DS}
D. Dolgopyat, O. Sarig  {\em Local limit theorems for inhomogeneous Markov chains}, Springer Lecture Notes in Mathematics series, 2023.


\bibitem{DFGTV} D. Dragicevic, G. Froyland, C. Gonzalez-Tokman, S. Vaienti
{\em A spectral approach for quenched limit theorems for random expanding dynamical systems,} 
Comm. Math. Phys. {\bf 360} (2018) 1121--1187.

\bibitem{DFGTV1} 
D. Dragicevic, G. Froyland, C. Gonzalez-Tokman, S. Vaienti
{\em  A spectral approach for quenched limit theorems for random hyperbolic dynamical systems}, TAMS, {\bf 373} (2020), 629-664. 





\bibitem{EM22} E. O. Endo, V. Margarint
{\em Local central limit theorem for long-range two-body potentials at sufficiently high temperatures,}
J. Stat. Phys. {\bf 189} (2022), paper 34, 22 pp.



\bibitem{KestFurs61}
H Furstenberg, H Kesten, {\em Products of random matrices}, The Annals of Mathematical Statistics,
Vol. 31, No. 2 (Jun., 1960), pp. 457-469 (13 pages)






\bibitem{GR92}
Z. Gao, L. B. Richmond. 
{\em Central and local limit theorems applied to asymptotic enumeration IV: multivariate generating functions}, J. Computational and Applied Math. {\bf 41} (1992) 177--186.



\bibitem{Gor} M. I. Gordin
{\em The central limit theorem for stationary processes,}
Soviet Math. Dokl. {\bf 10} (1969) 1174--1176.

\bibitem{GO}
S. Gou\"ezel, {\em Berry-Esseen theorem and local limit theorem for non uniformly expanding maps}, Annales de l'Institut Henri Poincar\"e, Prob. and Stat. 997–1024 (2005).



\bibitem{GH}
Y. Guivar\'ch and J. Hardy, {\em Th\'eor\`emes limites pour une classe de cha\^ines de Markov et
applications aux diff\'eomorphismes d'Anosov},
 Ann. Inst. H. Poincar\'e Probab. Statist. 24 (1988), no. 1, 73-98.


\bibitem{Guiv2015}
Y. Guivar\'ch {\em Spectral gap properties and limit theorems for some random walks and dynamical systems}, Proc. Sympos. Pure Math. {\bf 89} 
(2015)  279--310. 






\bibitem{HK}
Y. Hafouta and Yu. Kifer, {\em Nonconventional limit theorems and random dynamics}, 
World Scientific, Singapore, 2018.

\bibitem{Nonlin}
Y. Hafouta, {\em Limit theorems for some time-dependent expanding dynamical systems}, Nonlinearity, 33 6421 (2020).





\bibitem{YH YT}
Y. Hafouta, {\em Limit theorems for random non-uniformly expanding or hyperbolic maps with exponential tails}, 
Ann. Henri Poincare {\bf 23} (2022) 293-332.

\bibitem{YH Adv}
Y.Hafouta, {\em Explicit conditions for the CLT and related results for non-uniformly partially expanding random dynamical systems via effective RPF rates}, Adv. Math. {\bf 426} (2023)
paper 109109.

\bibitem{HafSPA}
Y. Hafouta, {\em Convergence rates in the functional CLT for alpha-mixing triangular arrays}, Stoc. Proc. Appl., 161 (2023), 242-290.


\bibitem{YH 23}
Y. Hafouta, {\em Effective (moderate) random RPF  theorems and applications to  limit theorems for non-uniformly expanding RDS}, arXiv:2311.12950


\bibitem{MarShif1}
Y. Hafouta, {\em Statistical properties of Markov shifts (part I)}, https://arxiv.org/abs/2510.07757


\bibitem{HennionAoP97}
H. Hennion, {\em Limit theorems for products of positive matrices}, Ann. Prob.  Vol. 25, No. 4 (Oct., 1997), pp. 1545-1587 (43 pages) 








\bibitem{HNTV} N. Haydn, M. Nicol, A. Torok, S. Vaienti 
{\em Almost sure invariance principle for sequential and non-stationary dynamical systems,} 
Trans. AMS {\bf 369} (2017) 5293--5316. 



\bibitem{HKN} P. Hebbar, L. Koralov, J.  Nolen
{\em Asymptotic behavior of branching diffusion processes in periodic media,} 
Electron. J. Probab. {\bf 25} (2020) paper \# 126, 40 pp.

 
\bibitem{HH}
H. Hennion and L. Herv\'e, {\em Limit Theorems for Markov Chains and Stochastic Properties of Dynamical
Systems by Quasi-Compactness}, Lecture Notes in Mathematics vol. 1766, Springer, Berlin, 2001.



\bibitem{Hough2019}
R. Houghm, {\em The local limit theorem on nilpotent Lie groups},
PTRF {\bf 174} (2019) 761--786.

 \bibitem{Hw98}
 H.-K. Hwang  {\em Large deviations of combinatorial distributions II. Local limit theorems}, Ann. Appl. Prob. {\bf 8} (1998) 163--181.


\bibitem{Livsic}
A. Katok and V. Nitica, {\em Rigidity in higher rank abelian group actions}, Volume I, Cambridge Tracts in Mathematics, vol. 185, Cambridge University Press, Cambridge, 2011.



 \bibitem{Kh49} A. I. Khinchin
 {\it Mathematical Foundations of Statistical Mechanics,} 
Dover, New York (1949) viii+179 pp. 

\bibitem{Kifer 1998}
Yu. Kifer, {\em Limit theorems for random transformations and processes in random
environments},
Trans. Amer. Math. Soc. {\bf 350} (1998) 1481--1518. 

\bibitem{KM}
I. Kontoyiannis and S.P. Meyn. {\em Spectral theory and limit theorems for geometrically ergodic Markov processes}, Ann. Appl. Probab. 304--362, February 2003


\bibitem{LPRS} J. L. Lebowitz, B. Pittel, D. Ruelle, E. R. Speer 
{\em Central limit theorems, Lee-Yang zeros, and graph-counting polynomials,} 
J. Combin. Theory {\bf 141} (2016) 147--183.


\bibitem{LB06} S. Le Borgne,
{\em Exemples de systmes dynamiques quasi-hyperboliques a decorrelations lentes,} 
C. R. Math. Acad. Sci. Paris {\bf 343} (2006) 125--128. 

\bibitem{LL15} F. Ledrappier, S. Lim
{\em Local Limit Theorem in negative curvature,}
 Duke Math. J. {\bf 170} (2021) 1585--1681.


\bibitem{MPP LLT1}
F. Merlev\`ede,  M. Peligrad,  C. Peligrad,
{\em On the local limit theorems for $\psi$-mixing Markov chains,}
 ALEA {\bf 18} (2021) 1221--1239.

 \bibitem{MPP LLT2}
 F. Merlev\`ede,  M. Peligrad,  C. Peligrad,
{\em On the local limit theorems for lower $\psi$-mixing Markov chains,}
ALEA  {\bf 18}  (2022) 1103--1121.




\bibitem{Nag57}
S.V. Nagaev, {\em Some limit theorems for stationary Markov chains} Theory of Probability and its Applications 2.4 (1957): 378--406.


 \bibitem{NSV12} P. Nandori,  D. Szasz, T. Varju
{\em A central limit theorem for time-dependent dynamical systems,} 
J. Stat. Phys. {\bf 146} (2012) 1213--1220. 

 

\bibitem{NTV ETDS 18}
M. Nicol, A T\"or\"ok, S Vaienti, {\em Central limit theorems for sequential and random intermittent dynamical systems}, Erg. Theor. Dyn. Sys.  38 , Issue 3 , May 2018 , pp. 1127--1153.  

\bibitem{CLT3}
M. Nicol, F. Pereira, A. T\"or\"ok, {\em Large deviations and central limit theorems for sequential and random systems of intermittent maps}. Ergod. Theor. Dy. Sys. 41 (2021), 2805-2832.

 

 \bibitem{OP19}
H. Oh, W. Pan
{\it Local mixing and invariant measures for horospherical subgroups on abelian covers,} 
 IMRN {\bf 19} (2019)  6036--6088. 


\bibitem{PP} W. Parry, M. Pollicott
{\em Zeta functions and the periodic orbit structure of hyperbolic dynamics,}
Asterisque {\bf 187-188}  (1990)
268 pp.
 

 \bibitem{Pelig}
M. Peligrad {\em Central limit theorem for triangular arrays of non-homogeneous Markov chains}, PTRF
{\bf 154} (2012) 409--428.


 \bibitem{PS23} A. Procacci, B. Scoppola
{\em On the local central limit theorem for interacting spin systems,}
arXiv:2308.06178




\bibitem{SeVa} 
S. Sethuraman, S. R. S. Varadhan
{\em A martingale proof of Dobrushin's theorem for non-homogeneous Markov chains,} 
Electron. J. Probab. {\bf 10} (2005) 1221--1235. 

\bibitem{CLT2}
M. Stenlund, L.S. Young and H. Zhang, {\em
Dispersing Billiards with Moving Scatterers}, Comm. Math. Phys.  322 (2013), 909--955.


\bibitem{Wil} A. Wilkinson
{\em The cohomological equation for partially hyperbolic diffeomorphisms,}
Asterisque {\bf 358} (2013) 75--165.

\end{thebibliography}
\end{document}